\newtheorem{theorem}{Theorem}
\newtheorem{corollary}[theorem]{Corollary}
\newtheorem{definition}{Definition}
\newtheorem{lemma}[theorem]{Lemma}
\newcommand{\sgn}{\mathop{\mathrm{sgn}}}
\newcommand{\Pp}{\mathbf{P}}
\newcommand{\eps}{\epsilon}
\newcommand{\R}{\mathbb{R}}
\newcommand{\wt}{\widetilde}
\newcommand{\tauh}{\hat\tau_\epsilon}
\newcommand{\tauw}{\widetilde{\tau}_\epsilon}
\newcommand{\aone}{{\alpha_1^-}}
\newcommand{\atwo}{{\alpha_2^-}}
\newcommand{\Wc}{\mathcal{W}}
\newcommand{\sgm}{\tilde \sigma}
\title{Normal forms approach to diffusion near hyperbolic equilibria}
\author{Sergio Angel Almada Monter \and Yuri Bakhtin }
\address{School of Mathematics, Georgia Tech, Atlanta GA, 30332-0160, USA}
\email{salmada3@math.gatech.edu, bakhtin@math.gatech.edu}
\begin{document}

\begin{abstract}
We consider the exit problem for small white noise perturbation of a smooth dynamical
system on the plane in the neighborhood of a hyperbolic critical point. We show that
if the distribution of the initial condition has a scaling limit then the exit distribution
and exit time also have a joint scaling limit as the noise intensity goes to zero. The limiting 
law is computed explicitly. The result completes the theory of noisy heteroclinic networks in two
dimensions. The analysis is based on normal forms theory. 
\end{abstract}



\maketitle

\section{Introduction}
Small stochastic perturbations of continuous deterministic dynamical systems have been studied intensively for several decades. One of the
greatest achievements in the area is the celebrated Freidlin--Wentzell  (FW) theory that allows to explain long-term behavior of
systems with several meta-stable states at the level of large deviation estimates~\cite{Freidlin--Wentzell-book}.

An interesting situation where one can prove more precise estimates than those provided via FW quasi-potential approach was considered
by Kifer~\cite{Kifer}. He studied the exit problem for small noise perturbations of a deterministic system in a neighborhood of a hyperbolic fixed point (or, saddle)  in $\R^d$ assuming that
the starting point for the diffusion belongs to the stable manifold of the fixed point. Kifer showed that
as the noise level $\eps$ decays to 0, the diffusion tends to exit along the invariant manifold associated to the leading
eigenvalue $\lambda_+$ of the linearization of the system even in the presence of other unstable directions. He also found that the random exit time $\tau_\eps$ is
asymptotic in probability to $\lambda_+^{-1}\ln \eps^{-1}$.

When studying noisy perturbations of systems with heteroclinic networks, i.e.,\  multiple saddle points connected by heteroclinic orbits, 
Bakhtin \cite{nhn},\cite{nhn-ds}, realized that to understand the vanishing noise behavior of the system, one has to extend Kifer's work and analyze (i) 
the limiting distribution of the approximation error $\tau_\eps-\lambda_+^{-1}\ln \eps^{-1}$;
(ii) the limiting scaling laws of
the exit distribution for the neighborhood of each saddle. In fact, the exit distribution for the first
saddle point serves as the entrance distribution for the next saddle point, so that the peculiarities of the exit distribution can significantly 
influence the further evolution of the system.

The detailed analysis of scaling limits for distributional Poincar\'e maps near saddle points carried out in \cite{nhn} resulted in a complete theory for 
noisy heteroclinic networks. This theory explains interesting non-Markovian limit effects and the emerging patterns in the winnerless competion in the
process of sequential decision making (here, we are using the terminology from~\cite{Biology} where applications of heteroclinic networks to neural dynamics 
are considered).
The main result is that under the logarithmic time scaling the diffusion process converges in distribution in a special topology
to a precisely described limiting process that jumps between the saddles along the heteroclinic connections.

The core result that was applied in \cite{nhn} iteratively for sequences of saddle points connected to one another, is a lemma that computes the asymptotic 
scaling of the exit distribution for a neighborhood of a saddle point given the scaling of the entrance distribution.
The proof of that lemma was based on a coordinate change conjugating the driving drift vector field to a linear vector field. Although this method
and the lemma based on it apply in a fairly generic situation
where the so called no-resonance condition holds,
there are interesting cases such as Hamiltonian dynamics where the smooth linearization is not possible due to 
resonances. In these cases, the system remains nonlinear even under the optimal smooth change of coordinates, but
it has a certain special structure that can be studied using the classical theory of normal forms (see, 
e.g.,~\cite{MR1290117},\cite{IIlyashenko},\cite{Perko}).

In this paper,
 we extend the key lemma of \cite{nhn} to cover the resonant cases and, in fact, to the complete generality in the case $d=2$. Our approach is based on normal forms  that
have particularly nice structure in the 2-dimensional case. We 
believe that the main result of the present paper can be extended to higher dimensions.

An important consequence of our result is that in 2~dimensions the no-resonance restriction is completely removed from the theory of noisy heteroclinic networks developed in \cite{nhn},
so that the theory applies to any heteroclinic networks generated by smooth vector fields on the plane. It also provides a generalization of \cite{Bakhtin-SPA:MR2411523} and \cite{Kifer} in 2 dimensions. 



The structure of the paper is the following. In Section~\ref{Sec: Notation} we introduce the setting. In
Section~\ref{sec:statement_main} we state the main theorem and split the proof into several parts. 
In Section~\ref{sec: Normal_Forms} we introduce a simplifying change of coordinates in a small neigborhood of the saddle point.  The analysis of the transformed process in Section~\ref{sec: Main}
is based upon two results. Their proofs are given in Sections~\ref{sec:close_to_saddle} 
and~\ref{sec:unstable_manifold}.

$\mathbf{Acknowledgments.}$  The authors are grateful to Vadim Kaloshin for his advice on normal forms and for
pointing to \cite{IIlyashenko}. The work of Yuri Bakhtin is supported by NSF through a CAREER grant DMS-0742424.

\section{Setting}
\label{Sec: Notation}

Let us consider a $C^\infty$-smooth vector field~$b$ on $\R^2$ and a $C^2$-smooth matrix valued function $\sigma:\R^2 \to \R^{2 \times 2}$. 
Let $W$ be a standard $2$-dimensional Wiener process. In order to ensure that the stochastic It\^o
equation %
\begin{equation}
dX_\eps = b( X_\eps )dt + \eps \sigma(X_\eps)dW \label{eq:PrincipalEquation}
\end{equation}
has a unique global strong solution, our first
assumption is that both $b$ and $\sigma$ are Lipschitz and bounded, i.e.,\ there is a constant $L>0$ such that 
\begin{align*}
|\sigma(x)-\sigma(y)| \vee |b(x)-b(y)|& \leq L|x-y|,\quad x,y\in \R^2, \\
|\sigma(x)| \vee |b(x)|& \leq L,\quad x\in \R^2,
\end{align*}
where $|\cdot|$ denotes the Euclidean norm for vectors and Hilbert--Schmidt norm for matrices.
These conditions can be weakened, but we prefer this setting to avoid
multiple localization procedures throughout the text. For a general background on stochastic differential equations see, for example,~\cite{Karatzas--Shreve}. 

We shall denote by $S=(S^t)_{t\in\R}$ the flow generated by $b$: 
\begin{equation*}
\frac{d}{dt}S^{t}x=b(S^{t}x),\quad S^{0}x=x.
\end{equation*}%

Let $V$ be a domain in $\R^2$ with piecewise $C^2$ boundary.
We assume that the origin $0$ belongs to $V$ and it is a unique fixed point for $S$ in $\bar V$, 
or, equivalently, a unique critical point for $b$   in $\bar V$. Therefore, 
\begin{equation*}
b(x)=Ax+Q(x),
\end{equation*}%
where $A=Db(0)$ and $Q$ is the non-linear
part of the vector field satisfying $|Q(x)|=O(|x|^2)$, $x\to 0$. 

We assume that $0$ is a hyperbolic critical point, i.e.\ the matrix $A$ has two eigenvalues $\lambda_+$ and $-\lambda_-$ satisfiying $-\lambda_- < 0 < \lambda_+$. Without loss of generality, we suppose
that the canonical vectors are the eigenvectors for the matrix, so that $A=\mathop{\mathrm{diag}}(\lambda_+, -\lambda_-)$.

According to the Hadamard--Perron Theorem (see e.g.~\cite[Section~2.7]{Perko}),
the curves $\mathcal{W}^{s}$ and $\mathcal{W}^{u}$ defined via
\begin{equation*}
\mathcal{W}^{s}=\{x\in \mathbb{R}^{2}:|S^{t}x|\rightarrow 0\text{ as }%
t\rightarrow \infty \}
\end{equation*}%
and 
\begin{equation*}
\mathcal{W}^{u}=\{x\in \mathbb{R}^{2}:|S^{t}x|\rightarrow 0\text{ as }%
t\rightarrow -\infty \}
\end{equation*}%
are smooth, invariant under $S$ and tangent to $e_2$ and, respectively, to $e_1$ at $0$. The curve
$\Wc^{s}$ is called the stable manifold of $0$, and $\Wc^{u}$ is called the unstable manifold of $0$.

We assume that $\mathcal{W}^u$
intersects $\partial V$ transversally at points $q_{+}$ and $q_{-}$ such that
the segment of $\mathcal{W}^u$ connecting $q_{-}$ and $q_{+}$ lies entirely inside $V$ and 
contains~$0$.

We fix a point $x_{0}\in\mathcal{W}^s\cap V$ and
equip~\eqref{eq:PrincipalEquation} with the initial condition
\begin{equation}
X_{\epsilon }(0)=x_{0}+\epsilon ^{\alpha }\xi _{\epsilon },\quad\eps>0,
\label{eq:initial_condition}
\end{equation}%
where $\alpha \in (0,1]$ is fixed, and $(\xi _{\epsilon })_{\epsilon >0}$ is
a family of random vectors independent of $W$, such that for some random
vector $\xi _{0}$, $\xi _{\epsilon }\rightarrow \xi _{0}$ as $\epsilon
\rightarrow 0$ in distribution. 

If $\alpha\ne1$, then we impose a further technical
condition
\begin{equation}
 \Pp\{ \xi _0\parallel b(x_0)\} = 0,  \label{eqn: intial_hyp}
\end{equation}
where $\parallel$ denotes collinearity of two vectors.

We are studying the exit problem for the diffusion process $X_\eps$ in $V$. We are interested in the 
asymptotic distribution of the random point of exit of $X_\eps$ from~$V$ given by
$X_{\epsilon }(\tau _{\epsilon }^{V})$, where $\tau _{\epsilon }^{V}$ is the
stopping time defined by 
\begin{equation*}
\tau _{\epsilon }^{V}=\tau _{\epsilon }^{V}(x_{0})=\inf \{t>0:X_{\epsilon
}(t)\in \partial V\}.
\end{equation*}

\section{Main Result.}\label{sec:statement_main}
 The main result of the present paper is the following:

\begin{theorem}
\label{Thm: Main} In the setting described above, 
there is a family of random vectors $(\phi _{\epsilon })_{\eps>0}$, a family of random variables $(\psi_{\epsilon })_{\eps>0}$,
and a number 
\begin{equation}
\beta=\left\{ 
\begin{array}{cc}
1, & \alpha \lambda _- \geq \lambda_+ \\ 
\alpha \frac{\lambda_-}{\lambda _+}, & \alpha \lambda _- <\lambda _+ \\
\end{array}%
\right.
\label{eq:alpha_0}
\end{equation}
such that%
\begin{equation*}
X_\eps (\tau _\eps^{V})=q_{\sgn(\psi_{\epsilon })}+\epsilon
^{\beta}\phi _{\epsilon }.
\end{equation*}%
The random vector 
\[\Theta_\eps=\left(\psi_\eps, \phi_\eps, \tau _{\epsilon }^{V}+\frac{\alpha }{\lambda _{+}}\ln \epsilon\right)\]
converges in distribution as $\epsilon\to 0$.
\end{theorem}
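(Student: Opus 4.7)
The overall plan is to use the normal form change of coordinates $\Phi$ from Section~\ref{sec: Normal_Forms} to transform \eqref{eq:PrincipalEquation} in a small neighborhood $U$ of the saddle into a system in which the stable and unstable coordinates essentially decouple at the level of the drift. In two dimensions the smooth normal form of $b$ reduces to $A$ plus a finite sum of resonant polynomial monomials in the normal-form coordinates $(u,v)$, so the transformed process $Y_\eps = \Phi(X_\eps)$ satisfies
\begin{align*}
du &= \lambda_+ u\,dt + R_1(u,v)\,dt + \eps\,d\eta_u,\\
dv &= -\lambda_- v\,dt + R_2(u,v)\,dt + \eps\,d\eta_v,
\end{align*}
with resonant remainders $R_1,R_2$ and noise $(\eta_u,\eta_v)$ driven by the transformed diffusion matrix $\sgm = (D\Phi)\sigma$, which inherits the $C^1$ and boundedness hypotheses on $\sigma$.

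The trajectory is split into three phases. In Phase~I the process travels from $x_0 + \eps^\alpha\xi_\eps$ along a neighborhood of $\Wc^s$ until it first hits $\partial U$: this is an $O(\eps^\alpha)$ perturbation of the deterministic trajectory $S^t x_0$, it takes time $O(1)$, and the entry distribution has a scaling limit driven by $\xi_0$ pushed forward by the linearization of $S^t$ along $\Wc^s$. In Phase~II the process lives inside $U$: the unstable coordinate satisfies $u(t)\approx e^{\lambda_+ t}(\eps^\alpha\xi_1 + \eps N(t))$ where $N(t)=\int_0^t e^{-\lambda_+ s}\,d\eta_u(s)$ converges to a limit random variable $N$, while the stable coordinate decays at rate $\lambda_-$ and accumulates $O(\eps)$ noise. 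The exit from $U$ happens when $|u|$ first reaches a fixed threshold $\delta$, at time $\tau_\eps^U \sim -\lambda_+^{-1}\ln|\eps^\alpha\xi_1 + \eps N|$. This yields the limit of $\tau_\eps^V + \frac{\alpha}{\lambda_+}\ln\eps$ and identifies $\psi_\eps$ with (a rescaling of) $u(\tau_\eps^U)$, whose sign selects $q_+$ versus $q_-$. The stable coordinate at exit from $U$ has size $\max(e^{-\lambda_-\tau_\eps^U},\eps)\sim\max(\eps^{\alpha\lambda_-/\lambda_+},\eps)$, which is exactly $\eps^\beta$ as in \eqref{eq:alpha_0}. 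Phase~III follows $\Wc^u$ from $\partial U$ to $\partial V$ in time $O(1)$: an $O(\eps^\beta)$ perturbation at $\partial U$ is transported to an $O(\eps^\beta)$ deviation from $q_\pm$ at $\partial V$ by the deterministic Jacobian along $\Wc^u$, plus a subleading $O(\eps)$ diffusive contribution.

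The two auxiliary results promised in Section~\ref{sec: Main} correspond exactly to Phases~II and III: Section~\ref{sec:close_to_saddle} gives the joint scaling limit of $(\psi_\eps,v(\tau_\eps^U),\tau_\eps^U + \frac{\alpha}{\lambda_+}\ln\eps)$ for the exit from the normal-form neighborhood, and Section~\ref{sec:unstable_manifold} establishes the $C^1$ stability of the transport along $\Wc^u$ on the $\eps^\beta$ scale. Granting these, Theorem~\ref{Thm: Main} follows by concatenation: write $\tau_\eps^V = \tau_\eps^U + (\tau_\eps^V - \tau_\eps^U)$ with an $O(1)$-limit second term, and express $X_\eps(\tau_\eps^V)$ as the Phase~III image of $X_\eps(\tau_\eps^U)$; the joint convergence of $\Theta_\eps$ is inherited from the Phase~II limit via a deterministic change of variables.

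The principal technical obstacle is the Phase~II analysis in the presence of resonances. When $R_1\equiv 0$, as in the non-resonant setting of \cite{nhn}, the integrating factor $e^{-\lambda_+ t}$ reduces the $u$-equation to a pure noise integral and the convergence $N(t)\to N$ is immediate. In the resonant case $R_1$ contains polynomial monomials in $u$ and $v$ that produce a drift coupling which must be shown to be subleading. The plan is to write $\tilde u(t) = e^{-\lambda_+ t}u(t)$, derive an integral equation for $\tilde u$ whose nonlinear contribution is dominated by products of $e^{-\lambda_- t}$ factors and powers of $u$, and close the estimate by a Gronwall-type argument to prove $\tilde u(t)\to \eps^\alpha\xi_1 + \eps N$ up to negligible error uniformly in $\eps$ for times up to $\tau_\eps^U$. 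The non-collinearity condition \eqref{eqn: intial_hyp} is used at the end to ensure the limiting law of $\psi_\eps$ has no atom at $0$, so that $\sgn(\psi_\eps)$ is well-defined in the limit.
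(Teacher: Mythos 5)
Your three-phase decomposition matches the paper's plan at the level of dynamics: Phase~I is the far-from-saddle travel along $\Wc^s$ (Theorem~\ref{thm: far}), Phase~II is the passage through a normal-form neighborhood $U$ of the origin (Theorem~\ref{Thm: main_small}), and Phase~III is the final transversal exit through $\partial V$ (Theorem~\ref{Thm:Transversal_exit}). The $\eps^\beta$ scaling for the transverse coordinate at exit, the logarithmic time $-\frac{\alpha}{\lambda_+}\ln\eps$, and the role of \eqref{eqn: intial_hyp} in guaranteeing $\sgn\psi_\eps$ has a well-defined limit are all identified correctly.

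However, you misattribute what Sections~\ref{sec:close_to_saddle} and~\ref{sec:unstable_manifold} prove. Both sections concern the dynamics \emph{inside} $U$: the analysis of Theorem~\ref{Thm: main_small} is itself split by an intermediate stopping time $\hat\tau_\eps = \inf\{t:|Y_{\eps,1}(t)|=\eps^{\alpha p}\}$ for a carefully chosen $p\in\left(1-\frac{\lambda_+}{\lambda_-},\,\frac{\lambda_-}{\lambda_++\lambda_-}\right)$. Section~\ref{sec:close_to_saddle} (Lemma~\ref{prop: eta_convergence}) handles $[0,\hat\tau_\eps]$, where $|Y_{\eps,1}|$ stays below $\eps^{\alpha p}$; Section~\ref{sec:unstable_manifold} (Lemma~\ref{Thm:after_tauh}) handles $[\hat\tau_\eps,\tau_\eps^U]$, where the stable coordinate has already shrunk to $O(\eps^{\alpha(1-p)\lambda_-/\lambda_+})$. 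Your Phase~III, the transport from $\partial U$ to $\partial V$ along $\Wc^u$, is covered by Theorem~\ref{Thm:Transversal_exit}, which is quoted from Levinson and not reproved here. Also, in the paper Phase~I runs for a deterministic time $T$ with $S^Tx_0\in\Wc^s\cap U$, not until a stopping time at $\partial U$; Theorem~\ref{thm: far} is stated for fixed $T$.

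The omission of $\hat\tau_\eps$ is more than organizational. Your plan to run a single Gronwall estimate for $\tilde u(t)=e^{-\lambda_+ t}u(t)$ all the way to $\tau_\eps^U$ runs into trouble: the resonant monomial $u^{\alpha_1^+}v^{\alpha_2^+}$ satisfies $\alpha_1^+\lambda_+ -\alpha_2^+\lambda_- = \lambda_+$, so the drift contribution $e^{-\lambda_+ s}u(s)^{\alpha_1^+}v(s)^{\alpha_2^+}$ does not decay along the heuristic trajectory $u\sim\eps^\alpha e^{\lambda_+ s}$, $v\sim e^{-\lambda_- s}$. One needs \emph{either} smallness of $u$ (true only for $t\le\hat\tau_\eps$, which gives the $O_\Pp(\eps^{\alpha+\alpha p})$ bound in the proof of Lemma~\ref{prop: eta_convergence}) \emph{or} smallness of $v$ (true only after $\hat\tau_\eps$, via Lemma~\ref{lemma: y_after_global}) to close the estimate. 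The restriction \eqref{eqn: p_prop} on $p$ is precisely what makes both halves work. As sketched, your Phase~II argument is incomplete at this point.
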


The distribution of $\psi_{\epsilon }$,$\phi _{\epsilon }$, and the distributional limit of $\Theta_\eps$
will be described precisely.

The proof of Theorem~\ref{Thm: Main} has essentially three parts involving the analysis of diffusion (i) along $\mathcal{W}^s$;
(ii) in a small neighborhood of the origin; (iii) along~$\mathcal{W}^u$.  

The first part is based on a Theorem borrowed from \cite[Lemma 9.2] {nhn}.
To state the theorem, we need to introduce $\Phi _{x}(t)$ as the linearization of $S$ along the
orbit of $x\in\R^2$, i.e.\ we define $\Phi_{x}(t)$ to be the
solution to the matrix ODE
\begin{equation*}
\frac{d}{dt}\Phi _{x}(t)=A(t)\Phi _{x}(t)\text{, \ }\Phi _{x}(0)=I,
\end{equation*}
where $A(t)=Db(S^tx)$.
The theorem reads as:

\begin{theorem} 
\label{thm: far}
Let $x \in \R^2$ and $\left( \xi_\eps \right)_{\eps>0}$ be a family of random vectors independent of $W$ and convergent in distribution, as $\eps \to 0$, to $\xi_0$. Suppose
$\alpha\in(0,1]$ and let $X_\eps$ be 
the solution of the SDE~\eqref {eq:PrincipalEquation} with initial condition $X_{\epsilon }(0)=x+\epsilon ^{\alpha }\xi_{\epsilon }$. Then, for every $T>0$, the following representation holds true:
\begin{equation*}
X_{\epsilon }(T)=S^{T}x+\epsilon ^{\alpha }\bar{\xi}_{\epsilon },\quad \eps>0,
\end{equation*}%
where 
\begin{equation*}
\bar{\xi}_{\epsilon }\overset{\mathop{Law}}{\longrightarrow }\bar{\xi}_{0},\quad \eps\to 0,
\end{equation*}%
with%
\begin{equation*}
\bar{\xi}_{0}=\Phi _x(T)\xi_{0}+\mathbf{1}_{\{\alpha =1\}}N,
\end{equation*}%
$N$ being a Gaussian vector:
\begin{equation*}
N=\Phi _x (T)\int_{0}^{T}\Phi _x (s)^{-1}\sigma(S^s x)dW(s).
\end{equation*}
If $\alpha=1$ or assumption~\eqref{eqn: intial_hyp} holds, then $\Pp\{\bar \xi_0\parallel b({S^Tx})\}= 0$.
\end{theorem}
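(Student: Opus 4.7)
The natural approach is to study the rescaled deviation $Y_\eps(t) := \eps^{-\alpha}(X_\eps(t) - S^t x)$, so that $X_\eps(T) = S^T x + \eps^\alpha Y_\eps(T)$ and $\bar\xi_\eps = Y_\eps(T)$. Taylor-expanding the drift around $S^t x$ and using smoothness of $b$ yields
\begin{equation*}
 dY_\eps = A(t) Y_\eps\, dt + \eps^{\alpha} R(t, Y_\eps, \eps)\, dt + \eps^{1-\alpha} \sigma(S^t x + \eps^{\alpha} Y_\eps)\, dW,\qquad Y_\eps(0) = \xi_\eps,
\end{equation*}
with $A(t)=Db(S^t x)$ and the remainder $R$ bounded by $C|Y_\eps|^2$ on compact sets. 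In the limit $\eps \to 0$ the $\eps^\alpha R$ term disappears, while the diffusion term tends to $0$ when $\alpha<1$ and to $\sigma(S^t x)\,dW$ when $\alpha=1$, so the limiting equation is linear Gaussian and can be solved by variation of constants.

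Rigorously, I would first obtain a uniform moment bound $\mathbf{E}\sup_{t\leq T}|Y_\eps(t)|^p \leq C_p$ by writing $Y_\eps$ in integral form, applying the Burkholder--Davis--Gundy inequality to the stochastic integral (whose prefactor $\eps^{1-\alpha}$ is at most $1$), and invoking Gronwall's lemma; boundedness of $\sigma$ and Lipschitz continuity of $b$ suffice. With tightness in hand, I would then compare $Y_\eps$ with the solution $\wt Y_\eps$ of the frozen linear equation $d\wt Y = A(t)\wt Y\,dt + \mathbf{1}_{\{\alpha=1\}}\sigma(S^t x)\,dW$, $\wt Y(0) = \xi_\eps$, and estimate $Y_\eps - \wt Y_\eps$ by another Gronwall argument: the moment bound controls the $\eps^\alpha R$ drift, and in the $\alpha=1$ case uniform continuity of $\sigma$ controls $\sigma(S^t x + \eps Y_\eps)-\sigma(S^t x)$. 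Variation of constants then identifies the distributional limit as $\bar\xi_0 = \Phi_x(T)\xi_0 + \mathbf{1}_{\{\alpha=1\}}N$ with the advertised Gaussian $N$.

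For the transversality statement, I would use the identity $\Phi_x(T) b(x) = b(S^T x)$, which holds because both curves $t\mapsto \Phi_x(t)b(x)$ and $t\mapsto b(S^t x)$ solve $\dot z = A(t) z$ with common initial value $b(x)$. Invertibility of $\Phi_x(T)$ then gives $\Phi_x(T)\xi_0 \parallel b(S^T x)$ if and only if $\xi_0 \parallel b(x)$, which yields the claim immediately when $\alpha\in(0,1)$ under \eqref{eqn: intial_hyp}. When $\alpha=1$, the Gaussian $N$ is independent of $\xi_0$ and, conditioning on $\xi_0$, provides an absolutely continuous smoothing transverse to $b(S^T x)$, so the event $\{\Phi_x(T)\xi_0 + N \parallel b(S^T x)\}$ is null. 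The main obstacle I anticipate is making the Gronwall-plus-moment-bound scheme work uniformly in $\alpha\in(0,1]$: the stochastic perturbation in the $Y_\eps$ equation is of relative order $\eps^{1-\alpha}$, which degenerates to order one precisely at the endpoint $\alpha=1$ where it must survive to produce the Gaussian limit, so the estimates have to be sharp enough to distinguish these two regimes.
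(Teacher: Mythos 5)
The paper does not prove this theorem itself; it simply invokes it as Lemma~9.2 of the earlier work~\cite{nhn}. So there is no in-text proof to compare against, and your proposal must be judged on its own merits. Your overall scheme---rescale the deviation $Y_\eps=\eps^{-\alpha}(X_\eps-S^t x)$, linearize the drift along the deterministic orbit, compare with the frozen linear SDE via a Gronwall/BDG estimate, then identify the limit by variation of constants---is exactly the standard route, and the transversality argument via the cocycle identity $\Phi_x(T)b(x)=b(S^Tx)$, invertibility of $\Phi_x(T)$, and independence of $N$ from $\xi_0$ is correct (the $\alpha=1$ case silently uses that the component of $N$ transverse to $b(S^Tx)$ is atomless, a non-degeneracy hypothesis on $\sigma$ that the paper leaves implicit; this is an issue with the statement, not with your argument).

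The one genuine gap is the opening ``uniform moment bound $\mathbf{E}\sup_{t\le T}|Y_\eps(t)|^p\le C_p$.'' This cannot hold as written: $Y_\eps(0)=\xi_\eps$ is only assumed to converge in distribution and need not possess any finite moments, so no amount of BDG and Gronwall on the drift and diffusion terms will produce a moment bound for the solution. The fix is standard but must be stated: either condition on $\xi_\eps$ (exploiting the assumed independence of $\xi_\eps$ and $W$), obtain a conditional $L^2$ convergence of $Y_\eps(T)$ to the linearized solution uniformly over initial values in compact sets, and then combine with $\xi_\eps\to\xi_0$ in law; or localize on $\{|\xi_\eps|\le K\}$---which by tightness carries probability arbitrarily close to $1$ uniformly in small $\eps$---carry out the moment/Gronwall argument there, and send $K\to\infty$. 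Related to this, the Taylor bound $|R|\le C|Y|^2$ is only valid where $\eps^\alpha|Y|$ is small; for the a priori estimate you should use the global Lipschitz bound $\eps^\alpha|R|\le C|Y|$, and only switch to the quadratic bound after tightness is in hand to see that $\eps^\alpha R\to 0$. You gesture at this with ``on compact sets,'' but the order of the steps needs to be made explicit for the argument to close.
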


\medskip

The second part of the analysis is the core of the paper. Theorem~\ref{Thm: main_small} below
describes the behavior of the process in a small neighborhood $U$
of the origin. Notice that
since $x_0\in \Wc^s$, one can choose~$T$ large enough to ensure that that $S^Tx_0\in \Wc^s\cap U$. Therefore,
 the conditions of the following result are met if we use the terminal distribution of Theorem~\ref{thm: far} (applied to
the initial data given by~\eqref{eq:initial_condition}) as the initial
distribution.

\begin{theorem}\label{Thm: main_small}
There are two neighborhoods of the origin $U\subset U'\subset V $, two positive numbers $\delta<~\delta'$, 
and $C^2$ diffeomorphism $f:U'\to (-\delta',\delta')^2$, such that $f(U)=(-\delta,\delta)^2$
and the following property holds:
 
Suppose $x\in \Wc^s\cap U$, and $(\xi_\eps)_{\eps>0}$ is a family of random variables independent of $W$ and convergent in distribution, as $\eps\to0$, to $\xi_0$, where $\xi_0$ satisfies~\eqref{eqn: intial_hyp} with respect to $x$. Assume that
$\alpha\in(0,1]$ and that $X_\eps$ solves~\eqref{eq:PrincipalEquation} with initial condition~
\begin{equation}
X_\eps(0)=x+\eps^\alpha\xi_\eps,
\label{eq:small_entrance_scaling}
\end{equation} 
where $\xi_\eps$ satisfies condition~\eqref{eqn: intial_hyp} with respect to $x$.

There is also a family of random vectors $(\phi'_\eps)_{\eps>0}$, and a family of random variables $(\psi'_\eps)_{\eps>0},$ such that
\begin{equation*}
X_{\epsilon }(\tau _{\epsilon }^{U}) = g( \sgn(\psi'_\eps)\delta e_1)+\epsilon ^{\beta} \phi' _\eps,
\end{equation*}
where $g=f^{-1}$, $\beta$ is defined in~\eqref{eq:alpha_0}, and 
the random vector 
\[\Theta'_\eps=\left(\psi'_\eps, \phi'_\eps, \tau_{\epsilon }^{U}+\frac{\alpha }{\lambda _{+}}\ln \epsilon\right)\]
converges in distribution as $\epsilon\to 0$.
\end{theorem}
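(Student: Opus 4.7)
\medskip

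\textbf{Plan of proof.} The plan is to pull everything back through the diffeomorphism $f$ of Section~\ref{sec: Normal_Forms}, work in the normal-form coordinates $y=f(x)$ where the drift has a simple resonant structure, and decouple the unstable and stable directions as much as possible. Writing $Y_\eps = f(X_\eps)$, Itô's formula gives an SDE
\[
dY_\eps = \tilde b(Y_\eps)\,dt + \eps\,\tilde\sigma(Y_\eps)\,dW + \eps^2 r(Y_\eps)\,dt
\]
on $(-\delta',\delta')^2$, where $\tilde b = Df\cdot b\circ f^{-1}$ is in normal form; crucially, $f$ straightens out $\Wc^s$ and $\Wc^u$, so the axes are invariant and the unstable/stable components satisfy
\[
dY_\eps^{(1)} = \bigl(\lambda_+ + p_1(Y_\eps)\bigr)Y_\eps^{(1)}\,dt + \eps\,(\tilde\sigma\,dW)^{(1)},\qquad
dY_\eps^{(2)} = \bigl(-\lambda_- + p_2(Y_\eps)\bigr)Y_\eps^{(2)}\,dt + \eps\,(\tilde\sigma\,dW)^{(2)},
\]
where the $p_i$ collect the (possibly nonvanishing) resonant corrections, which vanish on the respective manifolds. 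The initial point transforms to $Y_\eps(0) = (0,f(x)_2) + \eps^\alpha Df(x)\xi_\eps + O(\eps^{2\alpha})$, so the unstable coordinate starts with a nontrivial random seed of order $\eps^\alpha$ (this is where~\eqref{eqn: intial_hyp} is used to prevent degeneracy) while the stable coordinate starts at an $O(1)$ distance from the origin.

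Next I would split the evolution into two stages using the two lemmas that are the subject of Sections~\ref{sec:close_to_saddle} and~\ref{sec:unstable_manifold}. In the first stage, while $Y_\eps$ stays in the small box, compare it to the linearized (or appropriately reduced) process $\bar Y_\eps^{(1)}(t) = e^{\lambda_+ t}Y_\eps^{(1)}(0) + \eps\int_0^t e^{\lambda_+(t-s)}d\bar W(s)$ and analogously for the stable coordinate. The unstable coordinate grows deterministically in rate $\lambda_+$ so, setting $T_\eps = (\alpha/\lambda_+)\ln(1/\eps) + t$, one finds $Y_\eps^{(1)}(T_\eps) \to \xi_0^{(1)} e^{\lambda_+ t}$ in distribution, where $\xi_0^{(1)}$ is the first coordinate of the limiting seed (perturbed by the Gaussian contribution when $\alpha = 1$). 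Meanwhile, the stable coordinate has been contracting: after time $T_\eps$ it is of size $e^{-\lambda_- T_\eps} = \eps^{\alpha\lambda_-/\lambda_+}$ times an $O(1)$ factor, plus an $O(\eps)$ stochastic integral. The competition between these two scales produces exactly the exponent $\beta$ in~\eqref{eq:alpha_0}: if $\alpha\lambda_-\geq\lambda_+$ the deterministic contraction wins and $\beta=1$ (ambient noise scale dominates), otherwise $\beta=\alpha\lambda_-/\lambda_+$.

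After the unstable coordinate has reached an $O(1)$ value, with a definite sign which becomes the sign of $\psi'_\eps$, the second stage follows the process along the unstable manifold until the boundary $\partial U$. Because $Y_\eps$ is now at an $O(1)$ distance from the origin in the unstable direction while its stable coordinate is $O(\eps^\beta)$, this stage takes only $O(1)$ time, and one can apply Theorem~\ref{thm: far} (or a variant of it) in the $y$-coordinates to propagate the scaling: linearization around the deterministic orbit in $\Wc^u$ gives the limiting distribution of $\phi'_\eps$ as an affine image of $(\xi_0^{(1)}, \xi_0^{(2)} \text{ or a Gaussian})$ through the appropriate Jacobian $\Phi$. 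Pulling back through $f$ identifies the exit point with $g(\sgn(\psi'_\eps)\delta e_1) + \eps^\beta \phi'_\eps$. Finally, the triple $\Theta'_\eps$ is the joint limit of (unstable-coordinate sign indicator, rescaled stable fluctuation, residual time), all of which have been shown to converge jointly.

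\textbf{Main obstacle.} The delicate step is the first stage: because the normal form remains nonlinear in the resonant cases, one must show that the nonlinear corrections $p_1,p_2$ do not destroy the sharp scaling of $Y_\eps^{(1)}(T_\eps)$ and $Y_\eps^{(2)}(T_\eps)$ over the long time interval $T_\eps \sim \ln(1/\eps)$. Controlling these corrections, and simultaneously tracking the joint law of the two coordinates and the first exit time of the box, is what requires the separate close-to-saddle lemma of Section~\ref{sec:close_to_saddle}; the unstable-manifold propagation of Section~\ref{sec:unstable_manifold} is then comparatively routine and plays the role Theorem~\ref{thm: far} plays in the first part.
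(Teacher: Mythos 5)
Your high-level plan (pass to normal-form coordinates via Theorem~\ref{Lemma: Def_H1&H2}, then split the evolution inside the box into a ``close to saddle'' stage and an ``along the unstable direction'' stage) matches the structure of the paper. But your description of the two stages contains a genuine gap, and I don't think the decomposition as you state it can be pushed through.

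The paper does not stop the first stage when the unstable coordinate reaches $O(1)$, and it does not use a deterministic time $T_\eps=(\alpha/\lambda_+)\ln(1/\eps)+t$. It introduces a random stopping time $\hat\tau_\eps=\inf\{t:|Y_{\eps,1}(t)|=\eps^{\alpha p}\}$ at a carefully tuned \emph{intermediate} scale $\eps^{\alpha p}$, with $p\in(0,1)$ constrained by~\eqref{eqn: p_prop}, namely $1-\lambda_+/\lambda_-<p<\lambda_-/(\lambda_++\lambda_-)$. This intermediate scale is precisely what makes each stage tractable: in the first stage $Y_{\eps,1}$ is still small, so the resonant term $H_1=O(y_1^2|y_2|)$ is controllable because of the $y_1^2$ factor; in the second stage $Y_{\eps,2}$ has decayed (to scale $\eps^{\alpha(1-p)\lambda_-/\lambda_+}$), so $H_2=O(|y_1|y_2^2)$ stays small because of the $y_2^2$ factor. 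The two inequalities on $p$ are exactly what is needed for Lemma~\ref{prop: eta_convergence} and Lemma~\ref{Thm:after_tauh}, respectively. Your plan has no analogue of this device and, in its place, claims the second stage is ``only $O(1)$ time'' and can be handled by Theorem~\ref{thm: far} (or a variant). That is not so: starting from $|Y_{\eps,1}|=\eps^{\alpha p}$, the time to reach $|Y_{\eps,1}|=\delta$ is itself $\sim-\frac{\alpha p}{\lambda_+}\log\eps$, which diverges, and the process is still in a neighborhood of the saddle where the flow is not bounded away from the fixed point; Theorem~\ref{thm: far} is designed for the flow along the heteroclinic orbit \emph{outside} this neighborhood (it is applied in the proof of Theorem~\ref{Thm: Main}, not here). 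Consequently the convergence in distribution of the triple $\Theta'_\eps$ — in particular the time component — has to be extracted via an identity of the form $\hat\tau_\eps=-\frac{\alpha(1-p)}{\lambda_+}\log\eps-\frac{1}{\lambda_+}\log|\eta_\eps^+|$ combined with the matching identity for $\tau_\eps(\delta)$; this is the mechanism that produces the factor $\frac{1}{\lambda_+}\log\frac{\delta}{|\eta_0^+|}$ in the limit, and it cannot be seen from a deterministic-time approximation.

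You correctly identify the scaling exponent $\beta$ and correctly observe that controlling the resonant nonlinearity over logarithmic times is the delicate point. But the resolution of that difficulty — the intermediate-scale stopping time $\hat\tau_\eps$ at $\eps^{\alpha p}$ together with the two-sided constraint~\eqref{eqn: p_prop} on $p$ — is the single key idea of Section~\ref{sec: Main}, and it is missing from your plan. As written, your Stage 1 asks for a linearized approximation to survive all the way to $|Y_{\eps,1}|=O(1)$, at which point the quadratic-in-$y_1$ resonant term is not negligible; your Stage 2 is essentially empty. The two-stage decomposition only works if the boundary between the stages is placed at the intermediate scale.
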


The notation for $\Theta'_\eps$ and its components is chosen to match the notation involved in the
statement of Theorem~\ref{Thm: Main}. 
Random elements $\psi' _{\epsilon }$,$\phi'_{\epsilon }$ and the distributional limit of $\Theta'_\eps$
will be described precisely, see~\eqref{eq:limiting_distr_for_Theta_prime}. 
Obviously, the symmetry or asymmetry in the limiting distribution of $\psi'_\eps$ results in the symmetric or asymmetric choice of exit direction so that the exits in the positive
and negative directions are equiprobable or not. On the other hand, the limiting distribution of $\phi'_\eps$ determining the asymptotics of the exit point can also be symmetric
or asymmetric which results in the corresponding features of the random choice of the exit  direction at the next saddle point visited by the diffusion. 

In Section~\ref{sec: Main} we prove Theorem~\ref{Thm: main_small} using the approach based on normal forms.

The last part of the analysis is devoted to the exit from $V$ along $\Wc^u$. We need the following statement which is a specific case of the main result of~\cite{Levinson}.

\begin{theorem}\label{Thm:Transversal_exit} In the setting of Theorem~\ref{thm: far}, assume additionally that (i) $q=S^Tx\in \partial V$; (ii) there is no
$t\in[0,T)$ with $S^tx\in\partial V$;  (iii) $b(q)$ is tranversal (i.e.\ not tangent) to~$\partial V$ at~$q$. Then
\begin{equation}
\tau_\eps^V\stackrel{\Pp}{\to} T,\quad \eps\to 0, 
\end{equation}
and
\begin{equation}
\eps^{-\alpha} (X_\eps(\tau_\eps^V)-q) \stackrel{Law}{\to}\pi \bar \xi_0 ,\quad \eps\to0,
\end{equation}
where $\pi$ denotes the projection along $b(q)$ onto the tangent line to $\partial V$ at $q$. 
\end{theorem}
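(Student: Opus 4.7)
The plan is to combine Theorem~\ref{thm: far} (which gives a sharp $\eps^\alpha$-approximation of $X_\eps$ near the deterministic orbit $S^t x$) with the transversality of $b(q)$ at $q$, via an intermediate-value argument for the exit time and a Taylor expansion for the exit point. Because $b$ and $\sigma$ are globally Lipschitz and bounded, the Gronwall/It\^o estimates underlying Theorem~\ref{thm: far} go through on any interval $[0,T+h]$ for $h>0$ small, and they actually yield the process-level statement
\begin{equation*}
X_\eps(T+s)=S^{T+s}x+\eps^\alpha\bar\xi_\eps(s),\qquad s\in[-h,h],
\end{equation*}
with $\bar\xi_\eps(\cdot)\Rightarrow\bar\xi_0(\cdot)$ in $C([-h,h];\R^2)$ and $\bar\xi_0(0)=\bar\xi_0$.

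Transversality forces $q$ to lie on a smooth piece of $\partial V$, so near $q$ one may pick a $C^2$ defining function $\ell$ with $\ell(q)=0$, $\ell>0$ inside $V$ locally, and $c:=\nabla\ell(q)\cdot b(q)\neq 0$. Using $\ell(S^{T+s}x)=cs+O(s^2)$ together with the approximation above,
\begin{equation*}
\ell(X_\eps(T+s))=cs+\eps^\alpha\,\nabla\ell(q)\cdot\bar\xi_\eps(s)+O(s^2+\eps^\alpha|s|+\eps^{2\alpha})
\end{equation*}
uniformly in $|s|\le h$. Applying the intermediate value theorem on $|s|\le K\eps^\alpha$ (for a large but fixed $K$) produces a unique root $s_\eps$ of $\ell(X_\eps(T+\cdot))=0$ with probability tending to one; a Doob-inequality estimate for the stochastic integral rules out earlier exits from $V$ prior to time $T-h$. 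Hence $\tau_\eps^V=T+s_\eps$ and
\begin{equation*}
s_\eps=-c^{-1}\eps^\alpha\,\nabla\ell(q)\cdot\bar\xi_\eps(0)+o_\Pp(\eps^\alpha),
\end{equation*}
in particular $\tau_\eps^V\stackrel{\Pp}{\to}T$. Substituting back into the approximation and dividing by $\eps^\alpha$ gives
\begin{equation*}
\eps^{-\alpha}(X_\eps(\tau_\eps^V)-q)\stackrel{\mathrm{Law}}{\longrightarrow}\bar\xi_0-\frac{\nabla\ell(q)\cdot\bar\xi_0}{\nabla\ell(q)\cdot b(q)}\,b(q)=\pi\bar\xi_0,
\end{equation*}
the projection of $\bar\xi_0$ along $b(q)$ onto the tangent line to $\partial V$ at $q$.

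The only nontrivial technical item is the process-level upgrade of Theorem~\ref{thm: far}, which is needed in order to evaluate the approximation at the random time $\tau_\eps^V$ and to control the increment $\bar\xi_\eps(s_\eps)-\bar\xi_\eps(0)$; but this is routine since the Gronwall bound in its proof is already uniform in time, and all remaining steps are elementary Taylor expansions combined with Slutsky-type arguments and the continuous mapping theorem.
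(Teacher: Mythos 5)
The paper does not prove this theorem at all: it states it as a special case of the main result of~\cite{Levinson} and refers the reader there. Your attempt therefore supplies a direct argument where the paper supplies a citation. The route you take is the standard one for such transversal exit results and is almost certainly what underlies the reference: pick a local $C^2$ defining function $\ell$ for $\partial V$ at $q$, use the global Lipschitz/boundedness assumptions to upgrade Theorem~\ref{thm: far} to a process-level statement on a slightly longer window, show via the transversality $\nabla\ell(q)\cdot b(q)\ne 0$ that the exit must occur in an $O(\eps^\alpha)$ window around $T$, solve for the exit time to first order, and substitute back. The algebra is correct and the resulting limit is exactly $\pi\bar\xi_0$.

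Two points to tighten. The functional upgrade is not just a remark: you evaluate $\bar\xi_\eps(\cdot)$ at the random argument $s_\eps$, so you need convergence in $C([-h,h])$, i.e.\ tightness together with finite-dimensional convergence. The linear term $\Phi_x(T+\cdot)\xi_\eps$ converges uniformly by continuity of $\Phi_x$; the stochastic-integral term needs a BDG/$L^2$ bound of the kind used in Lemma~\ref{lemma: ito_convergence}; and one has to note that the Gronwall error bound holds uniformly over $[0,T+h]$. Second, ``unique root'' of $\ell(X_\eps(T+\cdot))$ on $|s|\le K\eps^\alpha$ is too strong---nothing prevents the path from crossing zero more than once. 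What your intermediate-value argument actually delivers, combined with the no-early-exit estimate and the observation that for $s\in[-h,-K\eps^\alpha]$ the deterministic term $cs$ dominates the $O_\Pp(\eps^\alpha)$ noise, is that the \emph{first} root, which is $\tau_\eps^V-T$, is $O_\Pp(\eps^\alpha)$ with probability tending to one. That suffices, since every root in that window admits the same first-order expansion for $s_\eps$, and the continuity of $\bar\xi_\eps(\cdot)$ then makes the final limit insensitive to which root is taken.
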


Now Theorem~\ref{Thm: Main} follows from the consecutive application of Theorems~\ref{thm: far} through~\ref{Thm:Transversal_exit} and with 
the help of the strong Markov property. In fact, in this chain of theorems, the conclusion of 
Theorem~\ref{thm: far} ensures that the conditions of Theorem~\ref{Thm: main_small} hold, and
the conclusion of the latter ensures that the conditions of Theorem~\ref{Thm:Transversal_exit} hold.
Notice that
the total time needed to exit~$V$ equals the sum of times described in the three theorems.
Notice also that at each step we can compute the limiting initial and terminal distributions explicitly. Theorems~\ref{thm: far} and~\ref{Thm:Transversal_exit} contain the respective formulas 
in their formulations, and the explicit limiting distribution for $\Theta'_\eps$ of Theorem~\ref{Thm: main_small} is computed in~\eqref{eq:limiting_distr_for_Theta_prime}.

\section {Simplifying change of coordinates}
\label{sec: Normal_Forms}
In this section we start analyzing the diffusion in the neighborhood of the saddle point. The first step is to
find a smooth coordinate change that would simplify the system. This can be done with the help of the theory of normal forms.

Let $g$ be a $C^\infty$ diffeomorphism of a neighborhood of the origin with inverse~$f$. When $X_\eps$ is close to the origin and belongs to the image of that neighborhood under $g$,
we can  use It\^o's
formula to 
see that $Y_\eps=f(X_\eps)$ satisfies 
\begin{align*}
dY_{\epsilon } &=Df(X_{\epsilon })dX_{\epsilon }+\frac{1}{2}%
[Df(X_{\epsilon }),X_{\epsilon }] \\
&=Df(g (Y _{\epsilon }))b(g (Y_{\epsilon
}))dt+\epsilon \sgm (Y_{\epsilon })dW +\epsilon ^{2}\Psi (Y_{\epsilon })dt,
\end{align*}%
for some smooth function $\Psi :\mathbb{R}^{2}\rightarrow \mathbb{R}^{2}$
and $\sgm =\left ((Df)\circ g \right) \sigma.$ 
Here $D$ denotes the Jacobian matrix, and the square brackets mean
quadratic covariation.
Since $Df \circ g =(Dg)^{-1}$, we can rewrite the above SDE as
\begin{equation}
dY_{\epsilon }=\left( \left( Dg (Y_{\epsilon })\right)
^{-1}b(g (Y_{\epsilon }))+\epsilon ^{2}\Psi (Y_{\epsilon
})\right) dt+\epsilon \sgm (Y_{\epsilon })dW.  \label{eqn: Eqn_zeta}
\end{equation}

The idea now is to choose a transformation $g$ (or, equivalently, $f$) that makes the drift
in equation~(\ref{eqn: Eqn_zeta}) easy to estimate. We are going to use the normal form theory and so we need to recall certain terminology, notation
and results from~\cite{IIlyashenko} putting them in the (two-dimensional) context of this paper.

A pair of complex numbers $\lambda=(\lambda_1, \lambda_2)$ is said to be non-resonant if there are no 
integral relations between them of the form $\lambda_j=\alpha \cdot \lambda$, where $\alpha =(\alpha _1,\alpha_2) \in \mathbb{Z}_{+}^{2}$ 
is a multi-index with $|\alpha|=\alpha_1 + \alpha_2 \geq 2$. Otherwise, we say that it is resonant. Moreover, a resonant $\lambda$ is said to be one-resonant 
if all the resonance relations for $\lambda$ follow from a single resonance relation.  A monomial $x^\alpha e_j=x_1^{\alpha _1 } x_2 ^{\alpha _2 } e_j$ 
is called a resonant monomial of order~$R$ if~$\alpha \cdot \lambda = \lambda_j$ and $|\alpha|=R$. Normal form theory asserts (see~\cite{IIlyashenko},\cite{MR1290117}) 
that for any pair of integers $R\geq1$ and $k \geq 1$, there are two neighborhoods of the origin $\Omega_f$ and $\Omega_g$  and a $C^k$-diffeomorphism 
$f:\Omega_f\to \Omega_g$ with inverse  $g:\Omega_g\to \Omega_f$
such that  
\begin{equation}
\left( Dg (y) \right)^{-1}b( g (y) )=Ay + P(y) + \mathcal{R}(y), \quad y \in \Omega_g \label{eqn; Pre-NormalForm}
\end{equation} 
where $P$ is a polynomial containing only resonant monomials of order at most~$R$ and $\mathcal{R}(\zeta)=O(|\zeta|^{R+1})$.
 If $\lambda$ is non-resonant, then $f$ can be chosen so that both $P$ and $\mathcal{R}$ in~\eqref{eqn; Pre-NormalForm} are identically zero.  
Moreover, due to~\cite[Theorem~3,Section~2]{IIlyashenko},  if $\lambda$ is one-resonant then $f$ can be chosen so that $\mathcal{R}$ in~\eqref{eqn; Pre-NormalForm} is identically zero. 
More precisely, if $\lambda$ is a one-resonant pair, then for any pair of integers $R\geq1$ and $k\geq1$, 
there are two neighborhoods of the origin $\Omega_f$ and $\Omega_g$  and a $C^k$-diffeomorphism 
$f:\Omega_f\to \Omega_g$ with inverse  $g:\Omega_g\to \Omega_f$
such that  
\begin{equation}
\left( Dg(y) \right)^{-1}b( g (y) )=Ay + P(y), \quad y \in \Omega_g, \label{eqn: NormalForm}
\end{equation}
where $P$ is a polynomial that contains only resonant monomials.

Note that $(\lambda_+,-\lambda_-)$ is either non-resonant or one-resonant (resonant cases that are not one-resonant are possible in higher dimensions where
pairs of eigenvalues get replaced by vectors of eigenvalues). The non-resonant case (in any dimension) was studied in~\cite{nhn}. 
In this paper, we extend the analysis of~\cite{nhn} to the non-resonant case, i.e.\ the one-resonant case, given that we are working in 2 dimensions. 

To find all resonant monomials of a given order $r\ge 2$, we have to find all the integer solutions to the two $2 \times 2$ systems of equations:%
\begin{align*}
\alpha _{1}\lambda _{+}-\alpha _{2}\lambda _{-} &=\pm\lambda _{\pm}, \\
\alpha _{1}+\alpha _{2} &=r.
\end{align*}
Therefore, the power multi-indices of a resonant monomial of order $r$ has to coincide with one of the following:
\begin{eqnarray}
(\alpha _{1}^{+}(r),\alpha _{2}^{+}(r)) &=&\frac{1}{\lambda _{+}+\lambda _{-}%
}(\lambda _{+}+r\lambda _{-},(r-1)\lambda _{+}),
\label{eqn: alpha_plus} \\
(\alpha _{1}^{-}(r),\alpha _{2}^{-}(r)) &=&\frac{1}{\lambda _{+}+\lambda _{-}%
}((r-1)\lambda _{-},r\lambda _{+}+\lambda _{-}),  \label{eqn: alpha_minus}
\end{eqnarray}%
Let us make some elementary observations on integer solutions of these equations for $r\ge 2$.
\begin{enumerate}
\item None of the solution indices can be $0$.  Moreover, neither $\alpha_1^+(r)$ nor $\alpha_2^-(r)$ can be equal to $1$. 
\item As functions of $r$, $\alpha_i^\pm (r)$ are increasing.
\item Expressions~\eqref{eqn: alpha_plus} and~\eqref{eqn: alpha_minus} cannot be an integer for $r=2$. 
\item The term $P=(P_1,P_2)$ in~\eqref{eqn: NormalForm} satisfies $P_1(y)=O(y_1^2 |y_2|)$ and~$P_2(y)=O(|y_1| y_2^2)$. 
This observation is a consequence of observations 1 and 3 since they imply that resonant multi-indices have to satisfy $\alpha^+(r)\ge (2,1)$ and $\alpha^-(r)\ge (1,2)$ coordinatewise.
\item If at least one of the coordinates $y_1$ and $y_2$ is zero, then  $P(y_1,y_2)=0$. This is a direct consequence of the previous observation.
\end{enumerate}
Given all these considerations, the main theorem of this section is a simple consequence of~\cite{IIlyashenko}.
\begin{theorem}
\label{Lemma: Def_H1&H2} In the setting described in Section~\ref{Sec: Notation},
there is a number $\delta'>0$, a neighborhood of the origin $U'$, and a $C^2$-diffeomorphism $f:U'\to (-\delta',\delta')$
with inverse  $g:(-\delta',\delta')^2\to U'$ such that  the following property holds.

If $X_\eps(0)\in U$, then the stochastic process $Y_\eps=(Y_{\eps,1},Y_{\eps,2})$ given by 
\[
Y_\eps(t)=f (X_\eps(t\wedge \tau_\eps^{U}))
\]
satisfies the following system of SDEs up to $\tau_\eps^{U}$ : 
\begin{align}
\label{eq:SDE_changed_coord1}
dY_{\epsilon,1 } &=\left(  \lambda _{+}Y_{\epsilon,1 }+H_{1}(Y_{\epsilon}, \eps) \right) dt+\epsilon \sgm_{1}(Y_{\epsilon})dW \\
\label{eq:SDE_changed_coord2}
dY_{\epsilon,2 } &=\left( -\lambda _{-}Y_{\epsilon,2 }+H_{2}(Y_{\epsilon}, \eps) \right) dt+\epsilon \sgm_{2}(Y_\epsilon)dW,
\end{align}
where $\sgm_i:(-\delta',\delta')^2 \to \R$ are $C^1$ functions for $i=1,2$. 
The functions $H_i$ are given by $H_i=\hat H_i + \eps^2 \Psi_i$, where 
$\Psi_i:(-\delta',\delta')^2 \to \R^2$ are continuous bounded functions, and  $\hat H_i: (-\delta',\delta')^2 \times [0,\infty)$ are polynomials, so that for some constant $K_{1}>0$ 
and for any $y\in (-\delta',\delta')^2$,
\begin{align*}
|\hat H_{1}(y)| &\leq K_{1}|y_1|^{\alpha_1^+}|y_2|^{\alpha_2^+}, \\
|\hat H_{2}(y)| &\leq K_{1}|y_1|^\aone |y_2|^\atwo.
\end{align*}
Here, the integer numbers $\alpha_i^\pm$, $i=1,2$, are such that $(\alpha_1^+,\alpha_2^+)$ is of the form ~\eqref{eqn: alpha_plus} 
for some choice of $r=r_1\ge 3$, and
and $(\alpha_1^-,\alpha_2^-)$ is of the form~\eqref{eqn: alpha_minus} for some choice $r=r_2\ge 3$. In particular,
\begin{align*}
|H_{1}(y,\eps)| &\leq K_{1}y_1^{2}|y_2|+K_{2}\epsilon ^{2},\\
|H_{2}(y,\eps)| &\leq K_{1}|y_1|y_2^{2}+K_{2}\epsilon ^{2},
\end{align*}
for some constants $K_{1}>0$ and $K_{2}>0$.
 
\end{theorem}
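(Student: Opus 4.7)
The plan is to derive the theorem as essentially a direct consequence of the Ilyashenko--Yakovenko normal form result cited just before its statement, combined with the Itô calculation already carried out in~\eqref{eqn: Eqn_zeta}. Since the pair $(\lambda_+,-\lambda_-)$ is either non-resonant or one-resonant, the cited theorem produces, for any prescribed smoothness $k$, neighborhoods $\Omega_f,\Omega_g$ of the origin and a $C^k$-diffeomorphism $f:\Omega_f\to\Omega_g$ with inverse $g$ satisfying~\eqref{eqn: NormalForm} with a resonant polynomial $P$ and \emph{no} remainder. Choosing $k=3$ ensures that $Df$ is $C^2$ and therefore $\sgm=(Df\circ g)\sigma$ is $C^1$ in view of the $C^2$ hypothesis on~$\sigma$.

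Next, I would shrink $\Omega_g$ and pick $0<\delta<\delta'<1$ so that $(-\delta',\delta')^2\subset\Omega_g$, relabel $U'=g((-\delta',\delta')^2)$ and $U=g((-\delta,\delta)^2)$, and assemble the SDE for $Y_\eps=f(X_\eps)$ by substituting~\eqref{eqn: NormalForm} into~\eqref{eqn: Eqn_zeta}. This yields, on $[0,\tau_\eps^U]$,
\begin{equation*}
dY_\eps = \bigl(AY_\eps + P(Y_\eps) + \eps^{2}\Psi(Y_\eps)\bigr)\,dt + \eps\,\sgm(Y_\eps)\,dW,
\end{equation*}
and splitting into coordinates via $A=\mathop{\mathrm{diag}}(\lambda_+,-\lambda_-)$ immediately gives~\eqref{eq:SDE_changed_coord1}--\eqref{eq:SDE_changed_coord2} with $\hat H_i:=P_i$ and $H_i=\hat H_i+\eps^{2}\Psi_i$, where each $\Psi_i$ is continuous on the compact closure of the domain and hence bounded.

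The two bounds on $\hat H_i$ then follow by extracting the structure of $P$ from the explicit formulas~\eqref{eqn: alpha_plus}--\eqref{eqn: alpha_minus}. Observation~3 rules out $r=2$, so the smallest $r$ for which $(\alpha_1^+(r),\alpha_2^+(r))$ is integer-valued, call it $r_1$, satisfies $r_1\ge 3$; observation~2 guarantees that every other admissible $r$ gives strictly larger exponents, so on $(-\delta',\delta')^2$ with $\delta'<1$ each monomial of $P_1$ is dominated, up to a constant, by $|y_1|^{\alpha_1^+(r_1)}|y_2|^{\alpha_2^+(r_1)}$. The analogous argument applied to $P_2$ with $r_2\ge 3$ and indices $\alpha_i^-(r)$ yields the companion bound on $|\hat H_2|$. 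The cruder $|y_1|^{2}|y_2|$ and $|y_1||y_2|^{2}$ estimates on $|H_1|$ and $|H_2|$ are observation~4 combined with the uniform bound on $\eps^{2}\Psi_i$. In the non-resonant case, $P\equiv 0$ and the bounds on $\hat H_i$ are trivial.

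The main (and essentially only) substantive input is the one-resonant refinement of the smooth normal form theorem in~\cite{IIlyashenko}, which is what eliminates the higher-order remainder $\mathcal R$ from~\eqref{eqn; Pre-NormalForm} and leaves an exact polynomial nonlinearity. Everything else --- the Itô calculation, the multi-index accounting enabled by observations~1--5, and the smoothness transfer to $\sgm$ --- is routine bookkeeping.
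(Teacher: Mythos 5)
Your proposal is correct and follows essentially the same route as the paper: the paper itself simply remarks that "the main theorem of this section is a simple consequence of~\cite{IIlyashenko}," with the five preceding observations on resonant multi-indices supplying the bounds, and that is precisely the argument you spell out (normal form via the one-resonant refinement, the It\^o transformation of~\eqref{eqn: Eqn_zeta}, and the monomial accounting). Your choice $k=3$ is slightly more conservative than necessary --- $k=2$ already gives $Df\in C^1$, hence $\sgm\in C^1$, and $D^2f\in C^0$, hence $\Psi$ continuous, which is all the statement demands --- but this changes nothing of substance.
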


\section{Proof of Theorem~\ref{Thm: main_small}}
\label{sec: Main}
In this section we derive Theorem~\ref{Thm: main_small} from several auxiliary statements. Their proofs are postponed to later sections.

Theorem~\ref{Lemma: Def_H1&H2} allows to work with process $Y_\eps=f(X_\eps)$ instead of $X_\eps$ while $Y_\eps$ stays in $(-\delta',\delta')^2$

If we take $\delta\in(0,\delta')$, then for the initial conditions considered in Theorem~\ref{Thm: main_small} and given in~\eqref{eq:small_entrance_scaling},
\[
 \Pp\{X_\eps(0)\in U'\}\to 1,\quad\eps\to 0,
\]
i.e.,
\[
 \Pp\{Y_\eps(0)\in (-\delta',\delta')^2\}\to 1,\quad\eps\to 0.
\]
Moreover, denoting $f(x)$ by $y=(0,y_2)$ we can write
\[
 Y_\eps(0)=y+\eps^\alpha \chi_\eps = (\eps^\alpha \chi_{\eps,1},\ y_2+\eps^\alpha \chi_{\eps,2}),\quad \eps>0,
\]
where $\chi_\eps=(\chi_{\eps,1},\chi_{\eps,2})$ is a random vector convergent in distribution to $\chi_0=(\chi_{0,1},\chi_{0,2})=Df(x)\xi_0$. Due to the hypothesis in Theorem~\ref{Thm: main_small}, we notice that the distribution of $\chi_{0,1}$ has no atom at $0$.

Let us take any $p \in (0,1)$ such that 
\begin{equation}
 1-\frac{\lambda _{+}}{\lambda _{-}}<p<  \frac{\lambda_{-}}{\lambda_{+}+\lambda_{-}},  
\label{eqn: p_prop}
\end{equation}
and define the following stopping time: 
\[
\hat\tau _{\epsilon}=\inf \{t:|Y_{\epsilon,1 }(t)|=\epsilon^{\alpha p}\}.
\]

Up to time $\hat \tau_\eps$, the process $X_\eps$ mostly evolves along the stable manifold~$\mathcal{W}^{s}$.
After $\hat \tau_\eps$, it evolves mostly along the unstable manifold $\mathcal{W}^{u}$. Process $Y_\eps$ evolves
accordingly, along the images of $\Wc^s$ and $\Wc^u$ coinciding with the coordinate axes.

Let us introduce random variables $\eta_\eps^{\pm}$ via
\begin{align*}
\eta _{\epsilon }^{+}&=\epsilon ^{-\alpha }e^{-\lambda _{+}\hat\tau
_{\epsilon }}Y_{\epsilon,1 }(\hat\tau_{\epsilon }),\\ 
\eta
_{\epsilon }^{-}&=\epsilon ^{-\alpha (1-p)\lambda _{-}/\lambda
_{+}}Y_{\epsilon,2 }(\hat\tau _{\epsilon }).
\end{align*}
Also we define the distribution of random vector $(\eta _{0}^{+ },\eta_0^-)$ via
\begin{eqnarray}
\eta _{0}^{+} &=&\chi_{0,1}+\mathbf{1}_{\{\alpha =1\}}N^{+}, \label{eq:eta_0_+}\\
\eta _{0}^{-} &=&|\eta _{0}^{+}|^{\lambda _{-}/\lambda _{+}}y_{2},\notag
\end{eqnarray}%
where
\begin{equation}
N^+=\int_{0}^{\infty}e^{-\lambda _-s}\sgm_{1}(0,e^{-\lambda _{-}s}y_{2})dW
\label{eq:N_0_+}
\end{equation}
is independent of $\chi_{0,1}$.

\begin{lemma}
\label{prop: eta_convergence} If the first inequality in~\eqref{eqn: p_prop} holds, then
\begin{equation}
\label{eq:exit_at_tauh_along_axis_1} 
\Pp\{Y_{\eps,1}(\tauh) = \eps^{\alpha p} \sgn \eta^+_\eps\}\to 1,\quad \eps\to 0.
\end{equation}
and
\begin{equation}
 \left(\eta_\eps^+,\eta_\eps^-, \hat \tau_\eps+\frac{\alpha }{\lambda _{+}}(1-p)\log \epsilon\right)\ 
\stackrel{Law}{\longrightarrow}\ \left(\eta_0^+,\eta_0^-, -\frac{1}{\lambda _{+}}\log |\eta _{0 }^{+}| \right),\quad \eps\to 0. 
\end{equation}
\end{lemma}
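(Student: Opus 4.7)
The plan is to treat the linear parts of the SDEs~\eqref{eq:SDE_changed_coord1}--\eqref{eq:SDE_changed_coord2} as leading order on $[0,\tauh]$ and show via variation of constants that the resonant nonlinearities and the noise contribute only subleading corrections; the exit-time limit then follows algebraically from $\eta_\eps^+$. First, It\^o's formula applied to $e^{-\lambda_+ t}Y_{\eps,1}(t)$ and to $e^{\lambda_- t}Y_{\eps,2}(t)$ produces mild-solution formulas valid up to $\tauh\wedge\tau_\eps^U$. I would then establish a priori control on $[0,\tauh]$: by definition $|Y_{\eps,1}(t)|\le\eps^{\alpha p}$, and a Gronwall-type estimate on~\eqref{eq:SDE_changed_coord2}, using $|H_2|\le K_1|Y_{\eps,1}|Y_{\eps,2}^2+K_2\eps^2$ and boundedness of $\sgm_2$, confines $Y_{\eps,2}$ to a small neighborhood of the deterministic stable trajectory $y_2 e^{-\lambda_- t}$ with probability $\to 1$. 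Hence $Y_\eps$ stays in the chart and $\tauh\le\tau_\eps^U$ w.h.p., justifying the use of the SDEs throughout.

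The main step rescales the first-coordinate mild formula by $\eps^{-\alpha}$:
\[
\eps^{-\alpha}e^{-\lambda_+ t}Y_{\eps,1}(t)=\chi_{\eps,1}+\eps^{-\alpha}\!\int_0^t e^{-\lambda_+ s}H_1(Y_\eps,\eps)\,ds+\eps^{1-\alpha}\!\int_0^t e^{-\lambda_+ s}\sgm_1(Y_\eps)\,dW.
\]
The initial piece converges to $\chi_{0,1}$. Using $|H_1|\le K_1 Y_{\eps,1}^2|Y_{\eps,2}|+K_2\eps^2$ combined with the bootstrap bound $|Y_{\eps,1}(s)|\le C\eps^\alpha e^{\lambda_+ s}$ for $s\le\tauh$ (obtained from a first pass through the same formula), the drift integral is shown to be $O(\eps^{\alpha p}+\eps^{2-\alpha})$, hence negligible. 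For $\alpha<1$ the stochastic integral vanishes by its prefactor $\eps^{1-\alpha}$; for $\alpha=1$, the combination of $Y_\eps(s)\to (0,y_2 e^{-\lambda_- s})$ on bounded time intervals, the It\^o isometry, and the tail control $\mathrm{Var}\bigl(\int_{\tauh}^\infty e^{-\lambda_+ s}\sgm_1 dW\bigr)=O(e^{-2\lambda_+\tauh})=o(1)$ identifies the limit as the Gaussian $N^+$ of~\eqref{eq:N_0_+}. Thus $\eta_\eps^+\to\eta_0^+$. Since $\chi_{\eps,1}\to\chi_{0,1}$ has no atom at $0$ by~\eqref{eqn: intial_hyp}, we get $|\eta_0^+|>0$ a.s., and the identity $Y_{\eps,1}(\tauh)=\eps^\alpha e^{\lambda_+\tauh}\eta_\eps^+$ combined with $|Y_{\eps,1}(\tauh)|=\eps^{\alpha p}$ yields both~\eqref{eq:exit_at_tauh_along_axis_1} and the announced relation
\[
\tauh+\tfrac{\alpha(1-p)}{\lambda_+}\log\eps=-\tfrac{1}{\lambda_+}\log|\eta_\eps^+|\to-\tfrac{1}{\lambda_+}\log|\eta_0^+|.
\]

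For $\eta_\eps^-$, I would evaluate the mild formula for $Y_{\eps,2}$ at $\tauh$ and rescale by $\eps^{-\alpha(1-p)\lambda_-/\lambda_+}$. The leading contribution $e^{-\lambda_-\tauh}y_2$, combined with the asymptotics $e^{-\lambda_-\tauh}\sim\eps^{\alpha(1-p)\lambda_-/\lambda_+}|\eta_0^+|^{\lambda_-/\lambda_+}$ obtained in the previous step, gives exactly $y_2|\eta_0^+|^{\lambda_-/\lambda_+}=\eta_0^-$. Joint convergence of the three components then follows from the continuous mapping theorem, and the independence of $\chi_{0,1}$ and $N^+$ in the limit is inherited from the independence of $\xi_\eps$ and $W$.

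The hardest part will be showing that the error terms in $Y_{\eps,2}(\tauh)$ — the initial perturbation $e^{-\lambda_-\tauh}\eps^\alpha\chi_{\eps,2}$, the $H_2$-drift integral, and especially the stochastic integral $\eps\!\int_0^{\tauh}e^{-\lambda_-(\tauh-s)}\sgm_2 dW$ — remain negligible after the strong rescaling by $\eps^{-\alpha(1-p)\lambda_-/\lambda_+}$. The rescaled variance of the stochastic integral is of order $\eps^{2-2\alpha(1-p)\lambda_-/\lambda_+}$, which vanishes precisely under the hypothesis $p>1-\lambda_+/\lambda_-$ (tight at $\alpha=1$); this is exactly the role of the first inequality in~\eqref{eqn: p_prop}. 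The $H_2$-drift is controlled by the $\eps^{\alpha p}$ bound on $|Y_{\eps,1}|$ together with the exponential decay of $|Y_{\eps,2}|$ over the interval $[0,\tauh]$ of length $O(\log\eps^{-1})$, so that the cumulative contribution is much smaller than the leading deterministic term after rescaling.
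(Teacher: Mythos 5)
Your overall architecture (Duhamel/mild-solution formulas, term-by-term estimates, rescaling, algebraic deduction of the time limit from $\eta_\eps^+$) matches the paper, and you correctly pin down the role of the first inequality in~\eqref{eqn: p_prop} as controlling the rescaled variance of the $Y_{\eps,2}$-noise. However, there is a genuine gap in the ``first pass'' bootstrap for $Y_{\eps,1}$.

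You assert that the bound $|Y_{\eps,1}(s)|\le C\eps^\alpha e^{\lambda_+ s}$ on $[0,\tauh]$ follows from ``a first pass through the same formula.'' That Gronwall-type pass requires bounding the drift integral
\[
\int_0^{t}e^{-\lambda_+ s}Y_{\eps,1}^2(s)|Y_{\eps,2}(s)|\,ds
\approx \eps^{2\alpha}\int_0^{t}e^{(\lambda_+-\lambda_-)s}\,ds,
\]
and when $\lambda_+>\lambda_-$ this grows like $e^{(\lambda_+-\lambda_-)t}$, so the estimate is useless unless one already knows that $t\le\tauh$ is at most of order $-\tfrac{\alpha}{\lambda_+}\log\eps$. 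The only bound you have without this is the crude $|Y_{\eps,1}|<\eps^{\alpha p}$ from the definition of $\tauh$, but that gives a drift contribution $O(\eps^{2\alpha p-\alpha})$ after rescaling, and the second inequality in~\eqref{eqn: p_prop} forces $p<\tfrac12$ precisely when $\lambda_+>\lambda_-$, so this does not vanish. In short, the bootstrap is circular: to control $Y_{\eps,1}$ up to $\tauh$ you need an a priori upper bound on $\tauh$, and to show $\tauh\le -\tfrac{\alpha}{\lambda_+}\log\eps$ w.h.p.\ you need control on $Y_{\eps,1}$.

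The paper breaks the circularity by an extra lemma (Lemma~\ref{prop: tau_bar_convergence}): it introduces the purely linear auxiliary process $u_\eps$ and a stopping time $\tauw=\inf\{t:|u_\eps(t)|=\eps^{\alpha\delta_0}\}$ with $\delta_0<p$, establishes the nonlinear estimates of Lemma~\ref{thm: estimate_non_linear} only up to the deterministically bounded horizon $l_\eps=\tau_\eps^U\wedge(-\tfrac{\alpha}{\lambda_+}\log\eps+R)$, shows that $Y_{\eps,1}$ stays $O_\Pp(\eps^{\alpha(p+\delta_0^2)}|\log\eps|)$-close to $u_\eps$ up to $\tauh\wedge\tauw$, and concludes that $Y_{\eps,1}$ must have already crossed the smaller level $\eps^{\alpha p}$ before $u_\eps$ reaches $\eps^{\alpha\delta_0}$, i.e.\ $\tauh<\tauw$ w.h.p. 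This is the missing ingredient in your proposal; it is also needed to justify $\tauh\stackrel{\Pp}{\to}\infty$, which you use to push the upper limit of the $N^+$-integral to infinity. Without a substitute for this step the remainder of your argument does not close.
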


We prove this lemma in Section~\ref{sec:close_to_saddle}. Along with the strong Markov property, it allows to reduce the study of the evolution of $Y_\eps$ after $\tauh$ to studying
the solution of system~\eqref{eq:SDE_changed_coord1}--\eqref{eq:SDE_changed_coord2} with initial condition
\begin{equation}
\label{eq:restart_at_tauh}
Y_{\epsilon }(0)=(\epsilon^{\alpha p}\sgn\eta _{\epsilon }^{+},\epsilon ^{\alpha (1-p)\lambda
_{-}/\lambda _{+}}\eta _{\epsilon }^{-}),
\end{equation}
where 
\begin{equation}
\label{eq:condition_for_theorem_along_unstable}
(\eta^+_\eps,\eta^-_\eps)\stackrel{Law}{\longrightarrow}(\eta^+_0,\eta^-_0),\quad\eps\to 0.
\end{equation}
We denote
\begin{equation}
 \tau _{\epsilon }=\tau _{\epsilon }(\delta )=\inf \{t\ge 0:|Y_{\epsilon,1}(t)|=\delta\}. \label{eqn: tau_def}
\end{equation}
Our next goal is to describe the behavior of $Y(\tau_\eps)$. To that end, we introduce a random variable $\theta$ via
\begin{equation}
\label{eq:theta}
\theta\stackrel{Law}{=}\begin{cases}
N, & \alpha \lambda _{-}> \lambda _{+}, \\
\left( \frac{|\eta _{0}^{+}|}{\delta }\right) ^{\lambda _{-}/\lambda
_{+}}y_{2}+N, & \alpha \lambda _{-}= \lambda _{+}, \\ 
\left( \frac{|\eta _{0}^{+}|}{\delta }\right) ^{\lambda _{-}/\lambda
_{+}}y_{2}, & \alpha \lambda _{-}< \lambda _{+}.%
\end{cases}
\end{equation} 
where the distribution of $N$ conditioned on $\eta_0^+$, on $\{\sgn \eta_0^+=\pm1\}$ is
centered Gaussian with variance
\begin{equation*}
\sigma _{\pm}=\int_{-\infty}^{0}e^{2\lambda _{-}s}
\left|\sgm_2(\pm \delta e^{\lambda _{+}s},0)\right|^2ds.
\end{equation*}%
Let us also recall that $\beta$ is defined in~\eqref{eq:alpha_0}.

\begin{lemma}\label{Thm:after_tauh} Consider the solution to system~\eqref{eq:SDE_changed_coord1}--\eqref{eq:SDE_changed_coord2}
equipped with initial conditions~\eqref{eq:restart_at_tauh} satisfying~\eqref{eq:condition_for_theorem_along_unstable}.
If the second inequality in~\eqref{eqn: p_prop} holds, then
\begin{equation}
\label{eq:exit_through_unstabale}
 \Pp\{|Y_{\eps,1}(\tau_\eps)|=\delta\}\to 1,\quad \eps\to 0,
\end{equation}
\begin{equation}
 \tau_\eps+\frac{\alpha p}{\lambda_+}\log\eps\stackrel{\Pp}{\longrightarrow}
\frac{1}{\lambda_+}\log\delta,
\label{eq:tau_eps_convergence}
\end{equation}
\begin{equation}
\epsilon ^{-\beta}Y_{\epsilon,2}(\tau _{\epsilon })\overset{Law}{\longrightarrow }\theta.
\label{eq:asymptotics_for_Y_2_tau}
\end{equation}%
Moreover, if $\beta<1$, then the convergence in probability also holds.
\end{lemma}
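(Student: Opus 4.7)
The plan is to decouple the two equations asymptotically. Between the restart time and $\tau_\eps$, the horizontal coordinate $Y_{\eps,1}$ grows exponentially away from the origin, driven essentially by its linear part $\lambda_+ Y_{\eps,1}$, while $Y_{\eps,2}$ is contracted by the factor $e^{-\lambda_- t}$ and picks up only a small residue from the initial condition plus noise accumulated near the exit.

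First I would establish uniform a-priori bounds on the whole time interval. From the mild (Duhamel) representation
\begin{equation*}
Y_{\eps,2}(t) = e^{-\lambda_- t}Y_{\eps,2}(0) + \int_0^t e^{-\lambda_-(t-s)}H_2(Y_\eps(s),\eps)\,ds + \eps\int_0^t e^{-\lambda_-(t-s)}\sgm_2(Y_\eps(s))\,dW(s)
\end{equation*}
together with $|H_2(y,\eps)|\le K_1|y_1|y_2^2+K_2\eps^2$, a bootstrap argument shows that with high probability $|Y_{\eps,2}(t)|$ stays of order $\eps^{\alpha(1-p)\lambda_-/\lambda_+}+\eps$ throughout $[0,\tau_\eps]$; the second inequality in~\eqref{eqn: p_prop} is exactly what makes both of these scales $\ll \eps^{\alpha p}$, hence the perturbation $H_1$, satisfying $|H_1|\le K_1 Y_{\eps,1}^{2}|Y_{\eps,2}|+K_2\eps^2$, is negligible compared to $\lambda_+ Y_{\eps,1}$. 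A Gronwall argument then yields
\begin{equation*}
Y_{\eps,1}(t) = \eps^{\alpha p}\sgn(\eta_\eps^+)e^{\lambda_+ t}(1+o(1))
\end{equation*}
uniformly up to $\tau_\eps$, which proves~\eqref{eq:exit_through_unstabale} and~\eqref{eq:tau_eps_convergence}.

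For~\eqref{eq:asymptotics_for_Y_2_tau} I evaluate the Duhamel formula at $t=\tau_\eps$. Using~\eqref{eq:tau_eps_convergence} the initial-condition piece becomes
\begin{equation*}
e^{-\lambda_-\tau_\eps}Y_{\eps,2}(0) = \eps^{\alpha\lambda_-/\lambda_+}\delta^{-\lambda_-/\lambda_+}\eta_\eps^-(1+o(1)),
\end{equation*}
and since $\eta_\eps^-\to|\eta_0^+|^{\lambda_-/\lambda_+}y_2$ this converges, at scale $\eps^{\alpha\lambda_-/\lambda_+}$, to $(|\eta_0^+|/\delta)^{\lambda_-/\lambda_+}y_2$. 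The nonlinear drift piece is bounded pathwise by $\int_0^{\tau_\eps}e^{-\lambda_-(\tau_\eps-s)}(K_1|Y_{\eps,1}(s)|Y_{\eps,2}(s)^2+K_2\eps^2)\,ds$, which using the a-priori bounds is $o(\eps^\beta)$ in all cases. For the stochastic integral I change variables $u=\tau_\eps-s$ and use the leading-order profile $Y_{\eps,1}(\tau_\eps-u)\approx\sgn(\eta_0^+)\delta e^{-\lambda_+ u}$, $Y_{\eps,2}(\tau_\eps-u)\approx 0$ to identify its limit as $\eps N_\pm$ where $N_\pm$ is centered Gaussian of variance $\sigma_\pm$ on the event $\{\sgn\eta_0^+=\pm 1\}$. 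Dividing by $\eps^\beta$ and comparing the orders $\eps^{\alpha\lambda_-/\lambda_+}$ and $\eps$ of the deterministic and stochastic contributions yields the three-case split in~\eqref{eq:theta}.

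The main obstacle is controlling the error when replacing $Y_\eps(s)$ by its leading profile inside the stochastic integral uniformly over the logarithmically long interval $[0,\tau_\eps]$. This requires a sup-norm pathwise comparison of the nonlinear and linearized flows, bounded in terms of $\sup_{s\le\tau_\eps}|Y_{\eps,2}(s)|$, together with a Burkholder--Davis--Gundy estimate to quantify the resulting $L^2$ error in the stochastic integral. Once weak convergence of $\eps^{-\beta}Y_{\eps,2}(\tau_\eps)$ is established, the additional convergence in probability in the case $\beta<1$ follows because then the stochastic contribution is of order $\eps=o(\eps^\beta)$ while the deterministic contribution is a continuous function of $\eta_\eps^+$; a Skorokhod coupling of $(\eta_\eps^+,\eta_\eps^-)\to(\eta_0^+,\eta_0^-)$ then upgrades distributional convergence to convergence in probability.
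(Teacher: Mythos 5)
The overall architecture of your proposal — a-priori bounds on $Y_{\eps,2}$, deducing exponential growth for $Y_{\eps,1}$ and hence~\eqref{eq:tau_eps_convergence}, then reading off~\eqref{eq:asymptotics_for_Y_2_tau} from Duhamel's formula with a three-way comparison of orders — is the same as the paper's.

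There is, however, a genuine gap at the step you pass over quickly: the claim that the nonlinear drift piece
\[
R_\eps(\tau_\eps)=\int_0^{\tau_\eps}e^{-\lambda_-(\tau_\eps-s)}\hat H_2(Y_\eps(s))\,ds
\]
``using the a-priori bounds is $o(\eps^\beta)$ in all cases'' is not correct as stated. The a-priori bound is $\sup_{t\le\tau_\eps}|Y_{\eps,2}(t)|=O_\Pp(\eps^{\alpha(1-p)\lambda_-/\lambda_+})$, and plugging this blindly into $|\hat H_2|\le K_1|Y_{\eps,1}|Y_{\eps,2}^2$ gives only $R_\eps(\tau_\eps)=O_\Pp(\eps^{2\alpha(1-p)\lambda_-/\lambda_+})$. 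This exponent can fail to exceed $\beta$. For instance, take $\lambda_+=1$, $\lambda_-=3$, $\alpha=1/3$ (so $\alpha\lambda_-=\lambda_+$ and $\beta=1$); the constraint~\eqref{eqn: p_prop} allows $p$ up to $3/4$, and for $p$ close to $3/4$ the exponent $2\alpha(1-p)\lambda_-/\lambda_+ = 2(1-p)$ drops to about $1/2$, which is well below $\beta=1$. The paper explicitly notes this: after establishing the bound $R_\eps=O_\Pp(\eps^{2\alpha(1-p)\lambda_-/\lambda_+})$ it says ``This is not sufficient for our purposes'' and proceeds with a finer argument. The needed improvement comes from observing that the dominant part of $Y_{\eps,2}(s)$ (the contracted initial condition) is not merely bounded but decays like $e^{-\lambda_- s}$; feeding this decay back into the integral for $R_\eps$ produces a self-referential inequality (because $Y_{\eps,2}$ also contains $R_\eps$ itself), which is then closed by a Gronwall argument. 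Without that bootstrap, the estimate $\eps^{-\beta}R_\eps(\tau_\eps)\to0$ does not follow, so your proof of~\eqref{eq:asymptotics_for_Y_2_tau} is incomplete in exactly the regime $\alpha\lambda_-\ge\lambda_+$ with $p$ near the upper end of~\eqref{eqn: p_prop}.

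The rest of the proposal is sound. The stochastic-integral analysis (substituting the leading profile $Y_{\eps,1}(\tau_\eps-u)\approx\sgn(\eta_0^+)\delta e^{-\lambda_+ u}$ and invoking BDG) is essentially Lemma~\ref{lemma: stoch_convergence} and works. Your Skorokhod-coupling remark for the $\beta<1$ case is a reasonable way to state the convergence-in-probability upgrade, which the paper asserts without elaboration.
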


A proof of this lemma is given in Section~\ref{sec:unstable_manifold}.

Now Theorem~\ref{Thm: main_small} follows from Lemmas~\ref{prop: eta_convergence} and~\ref{Thm:after_tauh}.
In fact, the strong Markov property and~\eqref{eq:exit_at_tauh_along_axis_1} imply
\[
\Pp\{\tau_\eps^U=\tauh+\tau_\eps(\delta)\}\to 1,\quad \eps\to 0,
\]
so that the asymptotics for $\tau_\eps^U$ is defined by that of $\tauh$ and $\tau_\eps(\delta)$. It is also clear that
one can set $\psi'_\eps=\eta^+_\eps$,
and $\phi'_\eps=Dg(\sgn(\eta^+_\eps)\delta e_1)Y_\eps(\tau_\eps)$, so that the limiting distribution of $\Theta'_\eps$ is given by
\begin{equation}
 \left(\eta^+_0,\ Dg(\sgn(\eta^+_0)\delta e_1)(\theta e_2),\ \frac{1}{\lambda_+}\log\frac{\delta}{|\eta_0^+|}\right),
\label{eq:limiting_distr_for_Theta_prime}
\end{equation}
where random variables $\eta_0^+$ and $\theta$ are defined in~\eqref{eq:eta_0_+} and~\eqref{eq:theta}

\section{Proof of Lemma~\ref{prop: eta_convergence} }\label{sec:close_to_saddle}

In this section we shall prove  Lemma~\ref{prop: eta_convergence} using several auxiliary lemmas. We start with some terminology.

\begin{definition}\rm
\label{def: Op} Given a family $(\xi_{\epsilon })_{\epsilon >0}$ of random 
variables or random vectors and a function $h:(0,\infty )\rightarrow (0,\infty )$ we say that $%
\xi_{\epsilon }=O_{\mathbf{p}}(h(\epsilon ))$ if for some $\eps_0>0$
distributions of $\left( \xi_{\epsilon}/h(\epsilon ) \right )_{0< \epsilon < \eps_0}$, form a tight family,
i.e.,\ for
any $\delta>0$ there is a constant $K_\delta>0$ such that 
\begin{equation*}
\mathbf{P}\left\{ |\xi_{\epsilon }|>K_{\delta }h(\epsilon )\right\} <\delta ,\quad 0<\epsilon <\epsilon _{0}.
\end{equation*}
\end{definition}

\begin{definition} \rm
A family of random variables or random vectors~$(\xi_\eps)_{\eps>0}$ is called slowly growing as $\eps\to 0$ (or just slowly growing) if
$\xi_\eps=O_{\Pp}(\eps^{-r})$ for all $r>0$. 
\end{definition}

Our first lemma estimates the martingale component of the solution of SDEs~\eqref{eq:SDE_changed_coord1} and~\eqref{eq:SDE_changed_coord2}.
 Let us define
\begin{eqnarray*}
S_{\epsilon }^{+}(T) &=&\sup_{t\leq  T }\left\vert
\int_{0}^{t}e^{-\lambda _{+}s}\sgm _{1}(Y_{\epsilon}(s))dW(s)\right\vert,\quad T>0, \\
S_{\epsilon }^{-}(T) &=&\sup_{t\leq T }\left\vert
\int_{0}^{t}e^{-\lambda _{-}(t-s)}\sgm_{2}(Y_{\epsilon}(s))dW(s)\right\vert,\quad T>0.
\end{eqnarray*}%

\begin{lemma}
\label{lemma: stoch_est}
Suppose $(\tau_\eps)_{\eps>0}$ is a family of stopping times (w.r.t.\ the natural
filtration of $W$). Then  
\[
S_{\epsilon }^{+}(\tau_\eps )=O_{\mathbf{P}}(1).
\]
If additionally $(\tau_\eps)_{\eps>0}$ is slowly growing, then $S_{\epsilon }^{-}(\tau_\eps )$ is
also slowly growing.
\end{lemma}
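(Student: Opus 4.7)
My plan is to derive both claims from Doob's $L^2$-maximal inequality for continuous martingales. The common input is that $\sgm_1,\sgm_2$, being $C^1$ on $(-\delta',\delta')^2$ and extending continuously to the closure, are bounded there by some constant $B>0$; since the relevant values of $Y_\eps$ lie in $[-\delta,\delta]^2\subset (-\delta',\delta')^2$, the integrands $\sgm_i(Y_\eps(s))$ are uniformly bounded by $B$ almost surely.

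For the first claim, I would consider the continuous martingale $M^+_t = \int_0^t e^{-\lambda_+ s}\sgm_1(Y_\eps(s))\,dW(s)$, whose quadratic variation is bounded uniformly in $t$ and $\omega$ by $\int_0^\infty e^{-2\lambda_+ s}B^2\,ds = B^2/(2\lambda_+)$ thanks to the deterministic damping factor. Doob's inequality applied to $M^+_{\cdot\wedge \tau_\eps}$ then yields
\begin{equation*}
\Pp\{S_\eps^+(\tau_\eps) > K\} \leq \frac{4\,\mathbb{E}[M^+]_{\tau_\eps}}{K^2} \leq \frac{2B^2}{\lambda_+ K^2}
\end{equation*}
uniformly in $\eps$, which gives $S_\eps^+(\tau_\eps) = O_\Pp(1)$.

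For $S_\eps^-$, I would factor $e^{-\lambda_-(t-s)} = e^{-\lambda_- t}e^{\lambda_- s}$ and put $N_t = \int_0^t e^{\lambda_- s}\sgm_2(Y_\eps(s))\,dW(s)$, so that $S_\eps^-(T) = \sup_{t \leq T} e^{-\lambda_- t}|N_t|$. The quadratic variation of $N$ now grows like $e^{2\lambda_- t}$, so a single Doob estimate on $[0,T]$ is too weak. Instead, I would partition $[0,T]$ into unit intervals and bound, on each $[n,n+1]$,
\begin{equation*}
\Pp\Bigl\{\sup_{t\in[n,n+1]}e^{-\lambda_- t}|N_t| > K\Bigr\} \leq \frac{4\,\mathbb{E}|N_{n+1}|^2}{K^2 e^{2\lambda_- n}} \leq \frac{C}{K^2},
\end{equation*}
with $C = 2B^2 e^{2\lambda_-}/\lambda_-$ independent of $n$. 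Summing over $n\leq T$ gives $\Pp\{S_\eps^-(T \wedge \tau_\eps) > K\} \leq C(T+1)/K^2$. To pass from the deterministic $T$ to $\tau_\eps$ I would invoke slow growth: given $r>0$ and $\delta>0$, pick $K_1$ with $\Pp\{\tau_\eps > K_1 \eps^{-r}\} < \delta/2$ for small $\eps$; decomposing on this event and its complement, the residual bound is $<\delta/2$ once $K > K_\delta \eps^{-r/2}$. Since $r>0$ is arbitrary, this shows $S_\eps^-(\tau_\eps)$ is slowly growing.

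The main obstacle is the exponential growth of $[N]_t$: a naive BDG estimate over $[0,\tau_\eps]$ would give an $O(e^{\lambda_-\tau_\eps})$ bound, which is useless when $\tau_\eps$ is only polynomial in $\eps^{-1}$. The workaround is to use the $e^{-\lambda_- t}$ factor \emph{outside} the stochastic integral to normalize the per-subinterval moment so that the Doob bound becomes dimensionless in $n$; the supremum over $[0,T]$ is then controlled by the \emph{number} of subintervals, which is polynomial in $\eps^{-1}$ under slow growth of $\tau_\eps$ and thus absorbable into any $\eps^{-r}$ tolerance.
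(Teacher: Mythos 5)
Your proof is correct and fills in exactly the details that the paper's one-line sketch omits: Doob's $L^2$ maximal inequality (equivalent to the BDG bound the paper cites) for $S_\eps^+$, and the unit-interval decomposition giving a polynomial-in-$T$ tail bound for $S_\eps^-$, which is the standard way of making precise the paper's remark that the integral ``behaves essentially like an Ornstein--Uhlenbeck process.'' One minor remark: you do not need (and cannot in general assert) that the $C^1$ functions $\sgm_i$ extend continuously to the closure of $(-\delta',\delta')^2$; what you actually use, and what is true, is that $\sgm_i$ are bounded on the compact subset $[-\delta,\delta]^2$ where the stopped process $Y_\eps$ lives, so the conclusion stands.
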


\begin{proof}
Both estimates are elementary. The first one is an easy consequence of the
martingale property of the stochastic integral involved in the definition of~$S_\eps^+$, 
and the BDG inequality (see~\cite[Theorem 3.3.28]{Karatzas--Shreve}). As for the second one,
we notice that the stochastic integral in the definition of $S^-_\eps$ behaves essentially like an Ornstein--Uhlenbeck process, 
and similar bounds apply.
\end{proof}

\begin{lemma}
\label{thm: estimate_non_linear} Suppose $Y_\eps$ is the solution of equations
 (\ref{eq:SDE_changed_coord1})--(\ref{eq:SDE_changed_coord2}) with initial conditions given by%
\begin{equation}
Y_{\epsilon,1 }(0)=\epsilon ^{\alpha }\chi _{\epsilon,1 }\quad \text{\rm  and }\quad Y_{\epsilon,2}(0)
=y_{2}+\epsilon ^{\alpha }\chi _{\epsilon,2 },  \label{eqn: Initial}
\end{equation}%
where distributions of random variables $(\chi _{\epsilon,1 })_{\epsilon >0}$ and $(\chi _{\epsilon,2
})_{\epsilon >0}$ form tight families. 
Let us fix any $R>0$ and denote $l_\eps= \tau_\eps^U\wedge(- \frac{\alpha}{\lambda_+}\log\eps + R)$ for $\eps>0$. Then
\[
\sup_{t\leq l_\eps}e^{-\lambda
t}|Y_{\epsilon,1 }(t)| = O_\Pp (\eps ^\alpha),
\]  and the family
\[
\left ( \eps^{-\alpha }\sup_{t\leq l_\eps}|Y_{\epsilon,2 }(t)-e^{-\lambda t }(y_{2}+\epsilon ^{\alpha }\chi
_{\epsilon,2})| \right )_{\eps>0}
\] is slowly growing.
\end{lemma}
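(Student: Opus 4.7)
The plan is to apply Duhamel's variation-of-constants representation to each equation in~\eqref{eq:SDE_changed_coord1}--\eqref{eq:SDE_changed_coord2}, decomposing $Y_{\eps,i}(t)$ as the sum of its deterministic linear flow, a nonlinear drift correction from $H_i$, and a stochastic integral controlled via Lemma~\ref{lemma: stoch_est}. The crucial observation is that on $[0,l_\eps]$ the factor $e^{\lambda_+ t}$ is bounded by $e^{\lambda_+ R}\eps^{-\alpha}$, so that terms of the form $\eps^{k\alpha}e^{k\lambda_+ t}$ gain a favorable power of $\eps$ whenever $k>1$; this is what turns the cubic nonlinearities $H_i$ into controllable perturbations.

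For the first assertion, I set $Z_{\eps,1}(t)=e^{-\lambda_+ t}Y_{\eps,1}(t)$ so that
\[
Z_{\eps,1}(t)=\eps^\alpha\chi_{\eps,1}+\int_0^t e^{-\lambda_+ s}H_1(Y_\eps(s),\eps)\,ds+\eps\int_0^t e^{-\lambda_+ s}\sgm_1(Y_\eps(s))\,dW(s),
\]
and run a stopping-time bootstrap with $\sigma_\eps=\inf\{t\ge 0:|Z_{\eps,1}(t)|>C\eps^\alpha\}$ for a constant $C$ to be tuned. Before $\sigma_\eps$, the bound $|H_1|\le K_1 Y_{\eps,1}^2|Y_{\eps,2}|+K_2\eps^2$ combined with $|Y_{\eps,2}|\le\delta$ and $Y_{\eps,1}^2\le C^2\eps^{2\alpha}e^{2\lambda_+ s}$ gives a drift integral of size at most $K_1\delta C^2 e^{\lambda_+ R}\eps^\alpha/\lambda_+$ on $[0,l_\eps]$, while Lemma~\ref{lemma: stoch_est} makes the stochastic integral $O_\Pp(\eps)=O_\Pp(\eps^\alpha)$. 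Taking $\delta$ small and then $C$ large, the right-hand side stays strictly below $C\eps^\alpha$ with probability tending to $1$, forcing $\sigma_\eps\ge l_\eps$ in the limit and giving~(a).

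For the second assertion, I use~(a) as the a priori bound $|Y_{\eps,1}(s)|\le C\eps^\alpha e^{\lambda_+ s}$ and set $D_\eps(t)=|Y_{\eps,2}(t)-e^{-\lambda_- t}(y_2+\eps^\alpha\chi_{\eps,2})|$, so Duhamel yields
\[
D_\eps(t)\le\int_0^t e^{-\lambda_-(t-s)}|H_2|\,ds+\eps\,S_\eps^-(t).
\]
Since $l_\eps\le -\tfrac{\alpha}{\lambda_+}\log\eps+R$ is slowly growing, Lemma~\ref{lemma: stoch_est} makes $S_\eps^-(l_\eps)$ slowly growing, hence so is $\eps^{1-\alpha}S_\eps^-(l_\eps)$ when $\alpha\le 1$. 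For the drift, I would use $|H_2|\le K_1|Y_{\eps,1}|Y_{\eps,2}^2+K_2\eps^2$, substitute the (a)-bound and the decomposition $|Y_{\eps,2}(s)|\le e^{-\lambda_- s}|y_2+\eps^\alpha\chi_{\eps,2}|+D_\eps(s)$, expand the square, and close the resulting integral inequality for $\sup_{s\le t}D_\eps(s)$ by a linear Gronwall argument, exploiting that the stable damping $e^{-\lambda_-(t-s)}$ absorbs the growing factor $e^{\lambda_+ s}$ from $|Y_{\eps,1}|$.

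The main obstacle lies in calibrating constants so that each bootstrap closes uniformly in $\eps$. In~(a) this means arranging $K_1\delta C e^{\lambda_+ R}/\lambda_+<1$, which forces $\delta$ to be chosen small relative to the tightness constants of $\chi_{\eps,1}$ and $S_\eps^+(l_\eps)$. In~(b) the corresponding care is in estimating the drift contribution from the product $|Y_{\eps,1}|Y_{\eps,2}^2$: each term in the expansion of $Y_{\eps,2}^2$ must integrate, after multiplication by $e^{-\lambda_-(t-s)+\lambda_+ s}$, to at most $\eps^\alpha$ times a slowly growing quantity, which relies on the cancellation between the exponential growth of $|Y_{\eps,1}|$ and the exponential decay of $|Y_{\eps,2}|$ along the linear part of the flow.
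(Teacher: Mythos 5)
Your overall framework (Duhamel representation, stopping-time bootstrap, Lemma~\ref{lemma: stoch_est} for the stochastic integrals) is the right one and matches the skeleton of the paper's argument, but both halves contain a genuine gap in the control of the nonlinear drift, and in both cases the gap comes from using a bound on $Y_{\eps,2}$ or on $H_2$ that is too crude to exploit the exponential cancellation that the proof actually hinges on.

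In part~(a), you bound $|Y_{\eps,2}|\le\delta$ inside the drift integral, producing a contribution of size $K_1\delta C^2 e^{\lambda_+ R}\eps^\alpha/\lambda_+$, i.e.\ quadratic in the bootstrap constant $C$. The bootstrap condition $C_0 + K_1\delta e^{\lambda_+ R}C^2/\lambda_+ < C$ (with $C_0$ the tightness bound on $|\chi_{\eps,1}|$ for tolerance $\gamma$) is solvable in $C$ only when $C_0 \lesssim \lambda_+/(K_1\delta e^{\lambda_+ R})$; your own condition $K_1\delta C e^{\lambda_+ R}/\lambda_+<1$ caps $C$ at $\lambda_+/(K_1\delta e^{\lambda_+ R})$. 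Since $\delta$ is a fixed feature of the normal-form chart and cannot be shrunk as a function of $\gamma$, while $C_0(\gamma)\to\infty$ as $\gamma\to 0$ (the family $(\chi_{\eps,1})$ is tight, not bounded), the instruction ``take $\delta$ small and then $C$ large'' cannot be carried out: the bootstrap closes only up to a fixed, non-infinitesimal failure probability, which is not the $O_\Pp(\eps^\alpha)$ claim. The paper avoids this by running a \emph{joint} bootstrap on $e^{-\lambda_+ t}|Y_{\eps,1}|$ and on the deviation $Z_\eps(t) = Y_{\eps,2}(t)-e^{-\lambda_- t}(y_2+\eps^\alpha\chi_{\eps,2})$ simultaneously (stopping times $\beta_\pm$). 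Before the joint stopping time $\beta$ one has $|Y_{\eps,2}(s)|\le 2K\eps^{\alpha-q}+e^{-\lambda_- s}(|y_2|+\eps^\alpha C)$, and plugging this exponentially decaying bound into the drift integral renders the nonlinear term $o(\eps^\alpha)$ for \emph{any} value of the bootstrap constant, instead of the dangerous $O(\delta C^2\eps^\alpha)$.

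In part~(b), you use the crude bound $|H_2|\le K_1|y_1|y_2^2+K_2\eps^2$. After inserting $|Y_{\eps,1}(s)|\lesssim\eps^\alpha e^{\lambda_+ s}$ and the decaying piece of $Y_{\eps,2}$, the drift term contains $\eps^\alpha e^{-\lambda_- t}\int_0^t e^{(\lambda_+-\lambda_-)s}\,ds$, which for $\lambda_+>2\lambda_-$ is of order $\eps^{2\alpha\lambda_-/\lambda_+}$ at $t=l_\eps$ and is \emph{not} $O_\Pp(\eps^{\alpha-r})$ for small $r>0$. The stable damping does not absorb the growth at exponents $(1,2)$. What saves the paper is the sharper bound $|\hat H_2(y)|\le K_1|y_1|^\aone|y_2|^\atwo$ from Theorem~\ref{Lemma: Def_H1&H2}, where $(\aone,\atwo)$ satisfy the resonance relation $\aone\lambda_+ - (\atwo-1)\lambda_- = 0$ with $\aone\ge1$, $\atwo\ge2$. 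That relation makes the exponentials cancel \emph{exactly}, converting the troublesome integral into $\int_0^t 1\,ds\lesssim|\log\eps|$, and the higher powers $\aone,\atwo$ supply the remaining $\eps$-gain. Your intuition that ``the stable damping absorbs the growing factor'' is exactly right, but that absorption only occurs at the true resonant exponents, which is why the lemma cannot be proved from the crude $(1,2)$ estimate alone.
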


\begin{proof} The tightness property implies that without loss of generality we can assume that $|\chi_{\epsilon,1 }|,|\chi_{\epsilon,2 }|<C$ for some constant $C>0$ and every $\epsilon >0$.

Let us fix $\gamma >0$. We can use Lemma~\ref{lemma: stoch_est} to take $c=c(\gamma
/3)>0$ such that 
\[
\mathbf{P}\{S_{\epsilon }^{+ }(l_\eps)>c\}<\gamma /2,
\]
and 
\[
\Pp\{ S_\eps^-(l_\eps)>c\eps^{-q}\}<\gamma/2,
\]
where $q$ is an arbitrary number satisfying $0<q<\alpha$. Let us introduce a constant $K=(3c)\vee C$ and
stopping times 
\begin{align*}
\beta _{+} &=\inf \left\{ t\ge 0:e^{-\lambda _{+}t}|Y_{\epsilon,1 }(t)|\geq
2K\epsilon ^{\alpha }\right\} , \\
\beta _{-} &=\inf \left\{ t\ge 0:|Y_{\epsilon,2 }(t)-e^{-\lambda
_{-}t}(y_2+\epsilon ^{\alpha }\chi _{\epsilon,2 })|\geq 2K\epsilon ^{\alpha - q}\right\} , \\
\beta&=\beta _{+}\wedge \beta_- \wedge l_\eps.
\end{align*}
We start with an estimate for $Y_{\epsilon,1 }$. Duhamel's principle 
for \eqref{eq:SDE_changed_coord1}, Theorem~\ref{Lemma: Def_H1&H2} and Lemma~\ref{lemma: stoch_est} imply that the estimate%
\begin{align}
\sup_{t\leq \beta }e^{-\lambda _{+}t}|Y_{\epsilon,1 }(t)| &\leq \epsilon
^{\alpha }K+K_{1}\int_{0}^{\beta }e^{-\lambda _{+}s}Y_{\epsilon,1
}(s)^{2}|Y_{\epsilon,2 }(s)|ds+K_{2}\frac{\epsilon ^{2}}{\lambda _{+}} +\epsilon S_{\epsilon }^{+}(\beta )  \notag \\
&\leq \epsilon ^{\alpha }K+K_{1}\int_{0}^{\beta }e^{-\lambda
_{+}s}Y_{\epsilon,1 }(s)^{2}|Y_{\epsilon,2 }(s)|ds+K_{2}\frac{\epsilon ^{2}}{\lambda _{+}}  +\epsilon \frac{K}{3} \label{eqn: x_k_espression}
\end{align}%
holds with probability at least $1-\gamma /2$. We analyze each term in the
RHS of equation \eqref{eqn: x_k_espression}.

Let us start with the integral in \eqref{eqn: x_k_espression}. For $%
s\leq \beta $, we see that 
\begin{align}
Y_{\epsilon,1 }(s)^{2}|Y_{\epsilon,2 }(s)| &\leq 4K^{2}\epsilon ^{2\alpha
}e^{2\lambda _{+}s}\left( |Y_{\epsilon,2 }(s)-e^{-\lambda
_{-}s}(y_2+\epsilon ^{\alpha }\chi _{\epsilon,2 })|+e^{-\lambda
_{-}s}|y_2+\epsilon ^{\alpha }\chi _{\epsilon ,2}|\right)   \notag \\
&\leq 8K^{3}\epsilon ^{3\alpha - q } e^{2\lambda _{+}s}+4K^{2}\epsilon ^{2\alpha
}e^{(2\lambda _{+}-\lambda _{-})s}(|y_2|+\epsilon ^{\alpha }C).
\notag 
\end{align}
Therefore,
\begin{align} \notag
K_{1}\int_{0}^{\beta }e^{-\lambda _{+}s}Y_{\epsilon,1 }(s)^{2} |Y_{\epsilon,1 }(s)|ds 
&\leq \frac{8K^{3}K_{1}e^{\lambda_+R}}{\lambda _{+}}  \epsilon ^{2\alpha - q } \\ 
&+4K_{1}K^{2}  \epsilon ^{2\alpha }(|y_{2}|+\epsilon ^{\alpha} C)\int_0^\beta e^{(\lambda_+ - \lambda _{-}) s} ds  \notag \\
&\leq K\epsilon ^{\alpha }/12 +  5K_{1}K^{2}  \epsilon ^{2\alpha }|y_{2}| \int_0^\beta e^{(\lambda_+ - \lambda _{-}) s} ds \label{eqn; bound_x} 
\end{align}
for all $\eps >0$ small enough. Notice that this is a rough estimate, the constants on the r.h.s.\ are not optimal but sufficient for our purposes. This also applies to some
other estimates in this proof.

Let us estimate the integral on the r.h.s.\  of \eqref{eqn; bound_x}. When $\lambda_+ > \lambda_-$, the integral is bounded by
\[
\frac{1}{\lambda_+ - \lambda_-} e^{ (\lambda_+ - \lambda_- ) \beta} \leq 
\frac{e^{(\lambda_+ - \lambda_-)R}}{\lambda_+ - \lambda_-} \eps^{ - \alpha+\alpha \lambda_- / \lambda_+};
\]
if $\lambda_+ < \lambda_-$, then the integral on the r.h.s of~\eqref{eqn; bound_x}
is bounded by~$(\lambda_- - \lambda_+)^{-1}$; if $\lambda_+=\lambda_-$, then the integral is bounded by 
$2\alpha\lambda_+^{-1}|\log \eps|$. Hence, for some constant $K_{\lambda_+,\lambda_-}>0$ and $\epsilon >0$ small enough,
\begin{align} 
\notag
K_{1}\int_{0}^{\beta }e^{-\lambda _{+}s}Y_{\epsilon,1 }(s)^{2}|Y_{\epsilon,2 }(s)|ds 
&\leq K\epsilon ^{\alpha }/12 + K_{\lambda_+,\lambda_-} \eps^{2\alpha -\alpha(1-\lambda_-/\lambda_+)^+} |\log\eps| \notag \\
&\leq K \eps^\alpha /6 \label{eqn: ineq_1}.
\end{align}%
Also, for $\epsilon >0$ small enough, 
\begin{equation}
K_{2}\epsilon ^{2}/\lambda _{+}+\eps K/3 <K\epsilon ^{\alpha }/2.  \label{eqn: ineq_2}
\end{equation}%
From~\eqref{eqn: x_k_espression}, \eqref{eqn: ineq_1} and \eqref{eqn: ineq_2} we get that for all $\eps>0$ small enough, the event   
\begin{equation*}
A=\left\{ \sup_{t\leq \beta }e^{-\lambda _{+}t}|Y_{\epsilon,1 }(t)|\leq
5K\epsilon ^{\alpha }/3\right\} 
\end{equation*}%
is such that $\Pp (A)>1-\gamma /2$.

Let us now consider $Y_{\epsilon,2 }(t)$ and denote 
\[
Z_{\epsilon }(t)=Y_{\epsilon,2 }(t)-e^{-\lambda_{-}t}(y_{2}+\epsilon ^{\alpha }\chi _{\epsilon,2 }).
\]
Duhamel's principle for $Y_{\eps,2}$, the definition of $\beta $, Theorem~\ref{Lemma: Def_H1&H2} and  Lemma~\ref{lemma: stoch_est}
imply that the inequalities
\begin{align}
\notag
\sup_{t\leq \beta }|Z_{\epsilon }(t)| &\leq K_1 \sup_{t \leq \beta}\int_{0}^{t}e^{-\lambda _{-}(t-s)}|Y_ {\eps,1}(s)|^\aone |Y_{\eps,2}(s)|^\atwo ds+K_2 \eps^{2} / \lambda_-  
+\epsilon S_{\epsilon }^{-}(\beta )  \\
&\leq K_1 \sup_{t \leq \beta}\int_{0}^{t}e^{-\lambda _{-}(t-s)}|Y_ {\eps,1}(s)|^\aone |Y_{\eps,2}(s)|^\atwo ds \notag \\
& \hspace{1.5 in} + \eps^{\alpha-q} \left(  K_2 \eps^{2-\alpha+q} / \lambda_- +\epsilon^{1-\alpha+q} S_{\epsilon }^{-}(\beta ) \right)
\notag \\
&\leq 2^\aone \eps^{\alpha \aone} K^\aone K_1 \sup_{t \leq \beta} e^{-\lambda_- t} \int_{0}^{t} e^{(\lambda _{-} + \aone \lambda_+ )s}|Y_{\eps,2}(s)|^\atwo ds + \eps^{\alpha-q}K/2  \label{eqn: y_k_espression}
\end{align}%
hold with probability at least $1-\gamma /2$ and for all $\epsilon >0$ small enough.
We analyze the integral term in \eqref{eqn: y_k_espression}. Note that, from the definition of $\beta$, and the inequality $(a+b)^r \leq 2^{r-1} ( a^r + b^r )$ we have that for any $t \leq \beta$ and any $\eps >0$ small enough,
\begin{align*}
|Y_{\eps,2}(t)|^\atwo &\leq 2^{\atwo-1} Z_\eps(t)^\atwo + 2^{\atwo -1 }e^{-\atwo \lambda_- t}|y_2 + \eps^\alpha \chi_{\eps,2}|^\atwo \\
& \leq 2^{2\atwo-1} K^\atwo \eps^{ (\alpha -q)\atwo } + 2^{2( \atwo -1)}e^{-\atwo \lambda_- t}|y_2|^\atwo \\
& \hspace{2.255 in}+2^{2( \atwo -1)}\eps^{\alpha \atwo }e^{-\atwo \lambda_- t}|\chi_{\eps,2}|^\atwo \\ 
&\leq \eps^{\atwo ( \alpha - q)} 2^{2 (\atwo -1)} \left(  2K^\atwo + \eps^{q \atwo} |\chi_{\eps,2}|^\atwo \right) + 2^{2(\atwo-1)} e^{- \atwo \lambda_- t} |y_2|^\atwo.
\end{align*}
Hence there is a constant $K_\alpha>0$ such that
\[
|Y_{\eps,2}(t) | ^\atwo \leq \eps^{\atwo ( \alpha - q)} K_\alpha + K_\alpha e^{-\atwo \lambda_- t},\quad t\leq \beta.
\] 
Using the last inequality, the definition of $\beta$, and the fact  $\aone \lambda _+ - (\atwo - 1)\lambda_-=0$ from Theorem~\ref{Lemma: Def_H1&H2}, we get
\begin{align}
\notag
\eps^{\alpha \aone} e^{-\lambda_- t} & \int_{0}^{t} e^{(\lambda _{-} + \aone \lambda_+ )s}| Y_{\eps,2}(s)|^\aone ds \\
\notag &\leq \eps^{\alpha( \aone + \atwo)} e^{\lambda_+ \aone \beta} \frac{K_\alpha \eps^{-q\atwo}}{\lambda_- + \aone \lambda_+}
 + K_\alpha \eps^{\alpha \aone }\int_{0}^{t} e^{(\aone \lambda _+ - (\atwo - 1)\lambda_- )s}ds \\
& \leq \eps^{ (\alpha - q) \atwo} \frac{K_\alpha e^{\lambda_+\alpha_1^- R}}{\lambda_- + \aone \lambda_+} + K_\alpha \eps^{\alpha \aone } \beta.
\label{eqn: integral_y_1}
\end{align}
Again, from Theorem~\ref{Lemma: Def_H1&H2} we know that $\aone \geq 1$ and $\atwo \geq 2$ which together with~\eqref{eqn: integral_y_1} imply that for all $\eps >0$ small enough
\begin{equation}
2^\aone \eps^{\alpha \aone} K^\aone K_1 \sup_{t \leq \beta} e^{-\lambda_- t} \int_{0}^{t} e^{(\lambda _{-} + \aone \lambda_+ )s}|Y_{\eps,2}(s)|^\atwo ds \leq K \eps^{\alpha - q} /6.
\label{eqn: y_integral_final}
\end{equation}
Using~\eqref{eqn: y_integral_final} and~\eqref{eqn: y_k_espression} we conclude that
the event 
\begin{equation*}
B=\left\{ \sup_{t\leq \beta }|Y_{\epsilon,2 }(t)-e^{-\lambda
_{-}t}(y_{2}+\epsilon ^{\alpha }\chi _{\epsilon,2 })|\leq 2K\epsilon ^{\alpha-q
}/3\right\} 
\end{equation*}%
is such that $\mathbf{P}(B) \geq 1-\gamma /2,$ for all $\epsilon >0$ small
enough.

The proof will be complete once we show that $\beta =l_\eps$ with probability at least $1-\gamma $. The latter is a consequence of the following
chain of inequalities that hold for all $\eps>0$ small enough: 
\begin{align*}
\mathbf{P}\{\beta _{+}\wedge \beta _{-} \leq l_\eps \}&\leq \mathbf{P}\left( \{\beta _{+}\wedge \beta _{-}\leq l_\eps\} \cap A\cap B \right)+\mathbf{P}(A^{c})+\mathbf{P}(B^{c}) \\
&\leq \mathbf{P}\left( \{\beta _{+}\wedge \beta _{-}\leq l_\eps \} \cap A\cap B\right)+\gamma \\
&\leq \mathbf{P}\left(  \{\beta _{+}\leq \beta _{-}\wedge l_\eps\} \cap A\right)+\mathbf{P}\left( \{\beta _{-}\leq \beta _{+}\wedge l_\eps\} \cap B \right)+\gamma \\
&=\mathbf{P}\{2\leq 5/3\}+\mathbf{P}\{2\leq 2/3\}+\gamma =\gamma .
\end{align*} 
\end{proof}

\medskip

Let us now analyze the evolution of the process $Y_\eps$ up to time $\tauh \wedge \tau_\eps^U$. We start with an application of Duhamel's principle: 
\begin{align}
Y_{\epsilon,1 }(t) &=e^{\lambda _{+}t}Y_{\epsilon,1 }(0)+\int_{0}^{t}e^{\lambda
_{+}(t-s)}H_{1}(Y_{\epsilon }(s),\eps)ds+\epsilon e^{\lambda _{+}t}%
{N}_{\epsilon }^{+}(t),  \label{eqn: x_duhamel} \\
Y_{\epsilon,2 }(t) &=e^{-\lambda _{-}t}Y_{\epsilon,2}(0)+\int_{0}^{t}e^{-\lambda _{-}(t-s)} H_{2}(Y_{\epsilon
}(s),\eps)ds+\epsilon {N}_{\epsilon }^{-}(t),  \label{eqn: y_duhamel}
\end{align}%
where ${N}_{\epsilon }^{\pm}(t)$ are defined by
\begin{align}
{N}_{\epsilon }^{+}(t) &=\int_{0}^{t}e^{-\lambda _{+}s}\sgm
_{1}(Y_{\epsilon }(s))dW(s),\notag \\
{N}_{\epsilon }^{-}(t) &=\int_{0}^{t}e^{-\lambda _{-}(t-s)}\sgm
_{2}(Y_{\epsilon }(s))dW(s). \label{eqn: def_N-}
\end{align}

\begin{lemma}
\label{lemma: y_convergence}
\[
\sup_{t \leq \hat{\tau_\eps} }
|Y_{\eps,2}(t) - e^{ -\lambda_- t} y_2|=O_{\Pp}(\eps^{\alpha p}).
\] 
\end{lemma}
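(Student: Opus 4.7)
The plan is to apply Duhamel's principle~\eqref{eqn: y_duhamel} to $Y_{\eps,2}$, subtract $e^{-\lambda_-t}y_2$, and bound each of the three resulting contributions uniformly on $[0,\tauh]$:
\begin{equation*}
Y_{\eps,2}(t)-e^{-\lambda_-t}y_2 = e^{-\lambda_-t}\eps^\alpha\chi_{\eps,2} + \int_0^t e^{-\lambda_-(t-s)}H_2(Y_\eps(s),\eps)\,ds + \eps N_\eps^-(t).
\end{equation*}
The initial-condition piece is $O_\Pp(\eps^\alpha)\subseteq O_\Pp(\eps^{\alpha p})$ by tightness of $(\chi_{\eps,2})$ and $p\le 1$. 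For the drift integral, tightness of $(\chi_{\eps,1})$ forces $|Y_{\eps,1}(0)|=\eps^\alpha|\chi_{\eps,1}|<\eps^{\alpha p}$ with probability close to $1$, so by continuity and the definition of $\tauh$ we have $|Y_{\eps,1}(s)|\le\eps^{\alpha p}$ for all $s\le\tauh$; likewise $|Y_{\eps,2}(s)|\le\delta'$ since $\tauh\le\tau_\eps^U$. Theorem~\ref{Lemma: Def_H1&H2} then gives $|H_2(Y_\eps(s),\eps)|\le K_1(\delta')^{\atwo}\eps^{\alpha p\aone}+K_2\eps^2$, and since $\aone\ge 1$ and $\alpha p<1<2$ this makes the convolution against $e^{-\lambda_-(t-s)}$ deterministically $O(\eps^{\alpha p})$.

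The stochastic piece $\eps N_\eps^-$ is the only term requiring care. The strategy is to show $\tauh\le l_\eps$ with probability tending to $1$, where $l_\eps=\tau_\eps^U\wedge(-\tfrac{\alpha}{\lambda_+}\log\eps+R)$ as in Lemma~\ref{thm: estimate_non_linear}, for a fixed constant $R$. Because $l_\eps$ is deterministically bounded by a constant times $\log\eps^{-1}$, Lemma~\ref{lemma: stoch_est} yields that $S_\eps^-(l_\eps)$ is slowly growing; monotonicity of $T\mapsto S_\eps^-(T)$ and the event inclusion then give $\eps\sup_{t\le\tauh}|N_\eps^-(t)|=O_\Pp(\eps^{1-r})$ for every $r>0$. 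Choosing $r<1-\alpha p$, which is possible since the second inequality in~\eqref{eqn: p_prop} forces $\alpha p<1$, yields $O_\Pp(\eps^{\alpha p})$ as required.

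The main technical step is therefore the verification that $\tauh\le l_\eps$ with probability tending to $1$, and this is the one place where a matching \emph{lower} bound on $|Y_{\eps,1}|$ is needed (Theorem~\ref{thm: estimate_non_linear} only gives an upper one). The plan is to rewrite~\eqref{eqn: x_duhamel} in the form $e^{-\lambda_+ t}Y_{\eps,1}(t)=\eps^\alpha\chi_{\eps,1}+E_\eps(t)$ and combine the bounds already available for the nonlinear drift (from Theorem~\ref{Lemma: Def_H1&H2} and the constraints $|Y_{\eps,1}(s)|\le\eps^{\alpha p}$, $|Y_{\eps,2}(s)|\le\delta'$) with Lemma~\ref{lemma: stoch_est} applied to $S_\eps^+$ to show $\sup_{t\le l_\eps}\eps^{-\alpha}|E_\eps(t)|\to 0$ in probability. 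Since $\chi_{0,1}$ has no atom at $0$ (by the hypothesis of Theorem~\ref{Thm: main_small}), there is $c>0$ with $\Pp\{|\chi_{\eps,1}|\ge c\}$ close to $1$, so $|Y_{\eps,1}(t)|\ge\tfrac{c}{2}\eps^\alpha e^{\lambda_+ t}$ on $[0,l_\eps]$ with high probability, and this value reaches $\eps^{\alpha p}$ at a time of order $\alpha(1-p)\lambda_+^{-1}\log\eps^{-1}$, strictly below $-\tfrac{\alpha}{\lambda_+}\log\eps+R$ for all small $\eps$.
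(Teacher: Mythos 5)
Your decomposition of $Y_{\eps,2}(t)-e^{-\lambda_- t}y_2$ into the initial-condition term, the drift integral, and the stochastic term $\eps N_\eps^-$ is exactly the paper's decomposition, and your treatment of the first two terms coincides with the paper's (up to the cosmetic difference of using the sharper bound $|\hat H_2|\le K_1|y_1|^{\aone}|y_2|^{\atwo}$ rather than $K_1|y_1|y_2^2$). For the stochastic term the paper simply invokes Lemma~\ref{lemma: stoch_est} to assert $\eps^{1-\alpha p}S_\eps^-(\tauh)=O_\Pp(1)$; that lemma gives slow growth of $S_\eps^-(\cdot)$ only when evaluated at a slowly growing stopping time, and the slow growth of $\tauh$ is only established later (Lemma~\ref{prop: tau_bar_convergence}). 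You are right to flag this and to try to supply the missing control on $\tauh$ yourself.

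The problem is the specific control you propose. You claim $\sup_{t\le l_\eps}\eps^{-\alpha}|E_\eps(t)|\to 0$ in probability and deduce the pathwise lower bound $|Y_{\eps,1}(t)|\ge\tfrac{c}{2}\eps^\alpha e^{\lambda_+ t}$ on all of $[0,l_\eps]$. This fails when $\alpha=1$: the martingale contribution to $\eps^{-\alpha}E_\eps(t)$ is $\eps^{1-\alpha}N_\eps^+(t)=N_\eps^+(t)$, which by Lemma~\ref{lemma: ito_convergence} converges to the nondegenerate Gaussian $N_0^+(t)$, so it is $O_\Pp(1)$ but emphatically not $o_\Pp(1)$. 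Consequently $\chi_{\eps,1}+N_\eps^+(t)$ is an honest random path that can cross zero on $[0,l_\eps]$ even when $|\chi_{\eps,1}|\ge c$, and the uniform-in-$t$ lower bound on $|Y_{\eps,1}|$ cannot be guaranteed with probability close to one. What survives is only a bound at a single well-chosen time, and establishing that requires that $\chi_{0,1}+\mathbf 1_{\{\alpha=1\}}N^+$ (i.e.\ $\eta_0^+$) has no atom at zero, not just $\chi_{0,1}$. This is precisely the route taken in Lemma~\ref{prop: tau_bar_convergence}: introduce the hitting time $\tauw$ of a level $\eps^{\alpha\delta_0}$ by the linearized process $u_\eps$, use the exact identity $\tauw=-\frac{\alpha(1-\delta_0)}{\lambda_+}\log\eps-\frac{1}{\lambda_+}\log|\wt\eta_\eps(\tauw)|$, and control $\log|\wt\eta_\eps(\tauw)|$ by tightness, rather than through a pointwise lower bound on the path. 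As written, your argument establishes the conclusion only for $\alpha<1$.
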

\begin{proof}
Duhamel's princinple, Theorem~\ref{Lemma: Def_H1&H2},  and the definition of~$\tauh$
imply that for some $K>0$,
\begin{align*}
|Y_{\eps,2} (t) - e^{ -\lambda_- t} y_2| &\leq \eps^\alpha |\chi_{\eps,2}| +\int_0^t e^{- \lambda_- (t - s)} \left(K_1|Y_{\eps,1} (s)|Y_{\eps,2}^2 (s) + K_2\eps^2 \right) ds + \eps S_\eps^- (t)\\
&\leq \eps^\alpha |\chi_{\eps,2}| + K \eps^{ \alpha p} + \eps^{\alpha p} \left( \eps^{1-\alpha p } S_\eps^- (\tauh) \right)
\end{align*}
for any $t \in (0, \tauh )$. The result follows since by Lemma~\ref{lemma: stoch_est}
the r.h.s.\ is $O_\Pp (\eps^{\alpha p})$
\end{proof}

As a simple corollary of this lemma, the first statement in Theorem~\ref{prop: eta_convergence} follows:
\begin{corollary} As ${\eps \to 0}$,
\label{cor: tau_eps<infty}
\[
 \Pp \{  \tau_\eps^U < \tauh \}\to 0.
\]In particular,~\eqref{eq:exit_at_tauh_along_axis_1}  holds true.
\end{corollary}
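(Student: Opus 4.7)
The plan is to work in the transformed coordinates $Y_\eps$, observe that on the event $\{\tau_\eps^U<\tauh\}$ any exit from $(-\delta,\delta)^2$ would have to occur through one of the horizontal sides $|Y_{\eps,2}|=\delta$, and then rule this out using Lemma~\ref{lemma: y_convergence}.

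First I would fix $\eps$ small enough that $\eps^{\alpha p}<\delta$ and look at the event $E_\eps=\{\tau_\eps^U<\tauh\}$. By the definition of $\tauh$ and continuity of $Y_\eps$, one has $|Y_{\eps,1}(t)|<\eps^{\alpha p}<\delta$ for every $t\in[0,\tau_\eps^U]$, so on $E_\eps$ the identity $Y_\eps(\tau_\eps^U)\in\partial((-\delta,\delta)^2)$ forces $|Y_{\eps,2}(\tau_\eps^U)|=\delta$. Next I would bring in Lemma~\ref{lemma: y_convergence}: given $\gamma>0$ one can pick $K>0$ such that for all sufficiently small $\eps$ the event
\[
A_{\eps,K}=\Bigl\{\sup_{t\leq\tauh}|Y_{\eps,2}(t)-e^{-\lambda_- t}y_2|\leq K\eps^{\alpha p}\Bigr\}
\]
has probability at least $1-\gamma$. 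Because $E_\eps\subset\{\tau_\eps^U\leq\tauh\}$, on $E_\eps\cap A_{\eps,K}$ one obtains
\[
|Y_{\eps,2}(\tau_\eps^U)|\leq |e^{-\lambda_-\tau_\eps^U}y_2|+K\eps^{\alpha p}\leq |y_2|+K\eps^{\alpha p}.
\]

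The geometric input is that the normal form change of coordinates sends $\Wc^s$ to the $y_2$-axis, so $x\in\Wc^s\cap U$ satisfies $f(x)=(0,y_2)$ with the strict inequality $|y_2|<\delta$. Hence for all $\eps$ small enough the displayed bound gives $|Y_{\eps,2}(\tau_\eps^U)|<\delta$, contradicting the conclusion of the previous paragraph. Thus $E_\eps\subset A_{\eps,K}^c$ eventually, and $\limsup_{\eps\to0}\Pp(E_\eps)\leq\gamma$; since $\gamma$ is arbitrary, $\Pp\{\tau_\eps^U<\tauh\}\to0$.

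For the second assertion, on the complement event $\{\tauh\leq\tau_\eps^U\}$ the definition of $\tauh$ gives $|Y_{\eps,1}(\tauh)|=\eps^{\alpha p}$, hence $Y_{\eps,1}(\tauh)=\eps^{\alpha p}\sgn Y_{\eps,1}(\tauh)=\eps^{\alpha p}\sgn\eta_\eps^+$ (the last equality being a tautology since $\eta_\eps^+$ differs from $Y_{\eps,1}(\tauh)$ by a positive factor), which is precisely~\eqref{eq:exit_at_tauh_along_axis_1}. There is no genuine obstacle beyond Lemma~\ref{lemma: y_convergence}; the one subtle point is the strict inequality $|y_2|<\delta$, which provides the geometric room needed to absorb the $O_\Pp(\eps^{\alpha p})$ fluctuation of $Y_{\eps,2}$ and conclude that the process cannot escape through a horizontal side before $\tauh$.
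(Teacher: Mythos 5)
Your proof is correct and fills in exactly the argument the paper leaves implicit (the paper merely says the corollary follows as a ``simple corollary'' of Lemma~\ref{lemma: y_convergence}). The key geometric observations — that on $\{\tau_\eps^U<\tauh\}$ the exit from $(-\delta,\delta)^2$ must occur through a horizontal side, that $|y_2|<\delta$ strictly since $x\in\Wc^s\cap U$, and that Lemma~\ref{lemma: y_convergence} rules out such an exit with high probability — are precisely the ingredients needed. One tiny point you could add for completeness: the assertion $|Y_{\eps,1}(t)|<\eps^{\alpha p}$ for $t\le\tau_\eps^U$ uses that $|Y_{\eps,1}(0)|=\eps^\alpha|\chi_{\eps,1}|<\eps^{\alpha p}$, which holds with probability tending to $1$ since $p<1$ and $(\chi_{\eps,1})$ is tight; restricting to this high-probability event first would make the continuity argument airtight, but this is the same implicit restriction already present in the paper's proof of Lemma~\ref{lemma: y_convergence}.
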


\begin{lemma}
\label{lemma: ito_convergence}
Let
\[
{N}_{0}^{+}(t) =\int_{0}^{t}e^{-\lambda _-s}\sgm_{1}(0,e^{-\lambda _{-}s}y_{2})dW.
\]
Then
\[
\sup_{t \leq \tauh} |{N}_\eps^+ (t) - {N}_0^+ (t) | \overset{L^2} {\longrightarrow}0,
 \quad \eps \to 0.
\]
\end{lemma}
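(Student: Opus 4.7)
I would use Doob's $L^2$ maximal inequality together with the It\^o isometry to reduce the statement to an ordinary integral bound on the squared integrand, and then control that integrand using the Lipschitz continuity of $\sgm_1$ on the compact coordinate box combined with the pathwise bounds of Lemma~\ref{thm: estimate_non_linear} and Lemma~\ref{lemma: y_convergence}.

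Concretely, since $\sgm_1$ is bounded on $[-\delta',\delta']^2$, the difference $M_\eps(t):=N_\eps^+(t)-N_0^+(t)$ is a continuous $L^2$-martingale up to $\tau_\eps^U$, and by Corollary~\ref{cor: tau_eps<infty} it is enough to bound it stopped at $\tauh\wedge\tau_\eps^U$. Doob's inequality together with the It\^o isometry yields
\[
\mathbf{E}\sup_{t\le\tauh}|M_\eps(t)|^2 \le 4\,\mathbf{E}\int_0^{\tauh}\bigl|e^{-\lambda_+ s}\sgm_1(Y_\eps(s)) - e^{-\lambda_- s}\sgm_1(0,e^{-\lambda_- s}y_2)\bigr|^2\,ds.
\]
Using the $C^1$-regularity of $\sgm_1$ on the compact box to obtain a Lipschitz constant $L'$, together with elementary manipulations of the exponential prefactors, the integrand is bounded above by an explicit integrable-in-$s$ weight multiplied by
\[
Y_{\eps,1}(s)^2 + \bigl(Y_{\eps,2}(s) - e^{-\lambda_- s}y_2\bigr)^2.
\]

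For $s\le\tauh$ the definition of $\tauh$ forces $|Y_{\eps,1}(s)|\le \eps^{\alpha p}$ pointwise, so the first piece integrates to $O(\eps^{2\alpha p})=o(1)$. The main and only delicate step is upgrading the in-probability bound of Lemma~\ref{lemma: y_convergence} into an $L^2$ bound for the second piece. Writing $Z_\eps:=\sup_{s\le\tauh}|Y_{\eps,2}(s)-e^{-\lambda_- s}y_2|$, I would use on the one hand that $Z_\eps\xrightarrow{\Pp}0$ by that lemma, and on the other hand that $Y_{\eps,2}(s)\in[-\delta',\delta']$ together with $|y_2|\le\delta'$ give the deterministic bound $Z_\eps\le 2\delta'$. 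Dominated convergence then converts $Z_\eps\xrightarrow{\Pp}0$ into $\mathbf{E}Z_\eps^2\to 0$, and integration against the exponential weight shows the second contribution is also $o(1)$. Adding the two estimates gives $\mathbf{E}\sup_{t\le\tauh}|M_\eps(t)|^2\to 0$, which is the desired $L^2$-convergence; the rest of the argument consists of routine applications of Doob, isometry, and the Lipschitz control on $\sgm_1$.
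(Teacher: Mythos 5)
Your argument follows the paper's own proof almost verbatim: both apply BDG (equivalently Doob plus the It\^o isometry) to reduce the claim to controlling $\mathbf{E}\int_0^{\tauh}|\,\cdot\,|^2\,ds$, then use the Lipschitz and boundedness properties of $\sgm_1$ together with the definition of $\tauh$ for the $Y_{\eps,1}$-coordinate and Lemma~\ref{lemma: y_convergence} for the $Y_{\eps,2}$-coordinate, with the bounded-convergence step (which the paper leaves implicit) spelled out. One small caution: the ``elementary manipulations of the exponential prefactors'' you invoke do not actually reconcile the $e^{-\lambda_+ s}$ appearing in $N_\eps^+$ with the $e^{-\lambda_- s}$ appearing in the paper's displayed formula for $N_0^+$; that formula contains a typo (the prefactor should be $e^{-\lambda_+ s}$, as is used inside the paper's own proof and as required for $N_\eps^+(\tauh)\to N^+$ in Lemma~\ref{prop: eta_convergence}), and once corrected your bound goes through cleanly.
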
  
\begin{proof} BDG inequality implies that for some constants $C_1,C_2>0$,
\begin{align}\notag 
\mathbf{E}\sup_{t\leq \tauh}|{N}_{\epsilon
}^{+}(t)-{N}_{0}^{+}(t)|^{2} &\leq C_{1}\mathbf{E}\int_{0}^{\hat%
\tau _{\epsilon }}e^{-2\lambda _{+}s}|\sgm_1 (Y_{\epsilon,1}(s),Y_{\eps,2}(s))- (0,e^{-\lambda _{-}s}y_{2})|^{2}ds \\
&\leq C_2\mathbf{E}\sup_{t\leq \hat{\tau} _{\epsilon }}|\sgm_1
(Y_{\epsilon,1 }(s),Y_{\epsilon,2 }(s))-\sgm_1 (0,e^{-\lambda _{-}s}y_{2})|^{2}.
\label{eq:Gaussian_approx}
\end{align}%
From Lemma~\ref{lemma: y_convergence} and the definition 
of~$\tauh$, it follows that 
\begin{equation}
\sup_{t\leq\hat\tau_\eps}\left |(Y_{\eps,1}(t),Y_{\eps,2}(t))-(0,e^{-\lambda_- t}y_2)\right|=O_{\Pp}(\eps^{\alpha p} ).
\label{eq:approx_along_stable_manifold}
\end{equation}
The desired convergence follows now from \eqref{eq:Gaussian_approx}, 
\eqref{eq:approx_along_stable_manifold}, and the boundedness and Lipschitzness of $\sgm_1$.
\end{proof}

We are now in position to give the first rough asymptotics for the time~$\tauh$. From now on we restrict ourselves to the 
 event $\{\tau_\eps^U > \tauh\}$ since due to Corollary~\ref{cor: tau_eps<infty} its probability is arbitrarily high.
\begin{lemma}
\label{prop: tau_bar_convergence} As $\eps\to0$,
\[
\Pp  \left \{  \tauh > -\frac{\alpha}{\lambda_+} \log \eps \right \} \to 0.
\]
\end{lemma}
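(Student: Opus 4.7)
Set $t_\eps:=-\frac{\alpha}{\lambda_+}\log\eps$, so that $e^{\lambda_+ t_\eps}=\eps^{-\alpha}$. The plan is to argue by contradiction. On $\{\tauh>t_\eps\}$ the very definition of $\tauh$ forces $|Y_{\eps,1}(t_\eps)|\le\eps^{\alpha p}\to 0$, whereas Duhamel's formula~\eqref{eqn: x_duhamel} will show that $Y_{\eps,1}(t_\eps)$ is close in distribution to $\eta_0^+=\chi_{0,1}+\mathbf 1_{\{\alpha=1\}}N^+$, which has no atom at $0$. These two facts are incompatible in the limit, so $\Pp\{\tauh>t_\eps\}\to 0$.

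\textbf{Execution.} By Corollary~\ref{cor: tau_eps<infty} I may work on $\{\tauh\le\tau_\eps^U\}$, whose probability tends to $1$. On its intersection with $\{\tauh>t_\eps\}$, the identity~\eqref{eqn: x_duhamel} is valid up to time $t_\eps$; substituting $Y_{\eps,1}(0)=\eps^\alpha\chi_{\eps,1}$, dividing by $\eps^\alpha$ and using $e^{-\lambda_+ t_\eps}=\eps^\alpha$ yields
\[
Y_{\eps,1}(t_\eps)=\chi_{\eps,1}+\eps^{1-\alpha}N_\eps^+(t_\eps)+\eps^{-\alpha}\int_0^{t_\eps}e^{-\lambda_+ s}H_1(Y_\eps(s),\eps)\,ds.
\]
For the first two summands, $\chi_{\eps,1}\to\chi_{0,1}$ in law; if $\alpha<1$ then $\eps^{1-\alpha}N_\eps^+(t_\eps)=o_\Pp(1)$ by Lemma~\ref{lemma: stoch_est}, while if $\alpha=1$ then $N_\eps^+(t_\eps)\to N^+$ by Lemma~\ref{lemma: ito_convergence} combined with $L^2$-convergence of $N_0^+(t)\to N^+$ as $t\to\infty$. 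Hence their sum converges in law to $\eta_0^+$. For the integral I invoke the bound $|H_1(y,\eps)|\le K_1y_1^2|y_2|+K_2\eps^2$ from Theorem~\ref{Lemma: Def_H1&H2} together with the envelopes from Lemma~\ref{thm: estimate_non_linear} (applied with any fixed $R>0$): $|Y_{\eps,1}(s)|^2\le O_\Pp(\eps^{2\alpha})e^{2\lambda_+ s}$ and $|Y_{\eps,2}(s)|\le|y_2|e^{-\lambda_-s}+O_\Pp(\eps^{\alpha-r})$ for arbitrary $r>0$. This reduces the dominant contribution to $O_\Pp(\eps^{2\alpha})|y_2|\int_0^{t_\eps}e^{(\lambda_+-\lambda_-)s}\,ds$, which I control separately in the three cases $\lambda_+>\lambda_-$, $\lambda_+=\lambda_-$, $\lambda_+<\lambda_-$; in each case, division by $\eps^\alpha$ gives $o_\Pp(1)$, and the lower-order terms are handled similarly. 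Putting everything together, on a high-probability event $E_\eps$ we obtain $|Y_{\eps,1}(t_\eps)-\eta_\eps|\le\gamma$ with $\eta_\eps\stackrel{Law}{\to}\eta_0^+$; intersecting with $\{\tauh>t_\eps\}$ and using $|Y_{\eps,1}(t_\eps)|\le\eps^{\alpha p}$, I conclude $|\eta_\eps|\le 2\gamma$ for $\eps$ small. A Portmanteau argument then gives
\[
\limsup_{\eps\to 0}\Pp\{\tauh>t_\eps\}\le\Pp\{|\eta_0^+|\le 2\gamma\},
\]
and sending $\gamma\to 0$ finishes the proof, since $\eta_0^+$ has no atom at $0$.

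\textbf{Main obstacle.} The delicate point is pushing the integral of $H_1$ below order $\eps^\alpha$. The naive bound $|Y_{\eps,1}|\le\eps^{\alpha p}$ available on $[0,\tauh]$ gives only $O(\eps^{2\alpha p-\alpha})$, which need not vanish when $p<1/2$---a regime that is compatible with~\eqref{eqn: p_prop} when $\lambda_-<\lambda_+$. The correct approach, precisely what Lemma~\ref{thm: estimate_non_linear} was set up to provide, is to use the exponential envelope $|Y_{\eps,1}(s)|\lesssim\eps^\alpha e^{\lambda_+ s}$ together with the exponential decay $|Y_{\eps,2}(s)|\lesssim|y_2|e^{-\lambda_- s}$ along the stable direction. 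Their combination against $e^{-\lambda_+ s}$ produces the integrand $e^{(\lambda_+-\lambda_-)s}$, whose integral up to $t_\eps$ is tractable in every case, supplying the extra $\eps^\alpha$ gain that the quadratic vanishing of $H_1$ alone does not deliver.
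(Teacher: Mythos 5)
Your proof is correct, but it takes a genuinely different route from the paper's. The paper introduces the auxiliary \emph{linear} process $u_\eps$ (solving $du_\eps=\lambda_+u_\eps\,dt+\eps\sgm_1(Y_\eps)\,dW$ with the same starting value), defines the companion stopping time $\tauw=\inf\{t:|u_\eps(t)|=\eps^{\alpha\delta_0}\}$ for a small $\delta_0<p$, solves the linear Duhamel formula explicitly to show $\tauw\le-(1-\delta_0^2)\frac{\alpha}{\lambda_+}\log\eps$ with high probability, and then transfers this to $\tauh$ by estimating $\Delta_\eps=Y_{\eps,1}-u_\eps$ on $[0,\tauh\wedge\tauw]$ via Lemma~\ref{thm: estimate_non_linear}: on $\{\tauh>\tauw\}$ one gets $|Y_{\eps,1}(\tauw)|\approx\eps^{\alpha\delta_0}\gg\eps^{\alpha p}$, contradicting $\tauh>\tauw$. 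You instead evaluate $Y_{\eps,1}$ directly at the deterministic time $t_\eps=-\frac{\alpha}{\lambda_+}\log\eps$, observe that after rescaling by $\eps^{-\alpha}$ the Duhamel representation converges in law to $\eta_0^+$ (the nonlinear integral is killed by the envelopes of Lemma~\ref{thm: estimate_non_linear} exactly as you describe, with the case split on $\lambda_+\lessgtr\lambda_-$), and derive a contradiction from the fact that $\eta_0^+$ has no atom at the origin (which is guaranteed by the hypothesis~\eqref{eqn: intial_hyp} for every $\alpha\in(0,1]$ and, when $\alpha=1$, also by the nondegenerate Gaussian term). Both arguments rest on the same two pillars --- the estimate of Lemma~\ref{thm: estimate_non_linear} and the quadratic vanishing of $H_1$ in the normal form --- but your argument avoids the auxiliary process and the intermediate stopping time entirely and is arguably cleaner. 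One presentation point: the random variable you call $\eta_\eps$ should be taken as $\chi_{\eps,1}+\mathbf{1}_{\{\alpha=1\}}N_0^+(t_\eps)$ (a functional of $\chi_\eps$ and $W$ alone), not $\chi_{\eps,1}+\eps^{1-\alpha}N_\eps^+(t_\eps)$; the latter's distributional convergence is only controlled on $\{\tauh>t_\eps\}$, which would make the Portmanteau step circular, whereas the former converges in distribution to $\eta_0^+$ unconditionally by independence of $\chi_\eps$ and $W$, and Lemma~\ref{lemma: ito_convergence} lets you pass from one to the other on the event $\{\tauh>t_\eps\}$ where you need it.
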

\begin{proof}
Let $u_\eps$ be the solution to the following SDE:
\begin{align*}
du_\eps(t) &= \lambda_+ u_\eps(t)dt + \eps \sgm_1 (Y_\eps(t))dW(t),\\
u_\eps(0)&=\eps^\alpha \chi_{\eps,1}.
\end{align*}
Let us take $\delta_0\in(0,1)$ to be specified later and consider the following stopping time 
\[
\tauw=\inf \left \{ t: |u_\eps(t)|= \eps^{\alpha \delta_0} \right \}.
\] 
Duhamel's principle for $u_\eps$ writes as 
\begin{align*}
u_\eps(t) &= \eps^\alpha e^{\lambda_+ t}\chi_{\eps,1} + \eps e^{\lambda_+ t} {N}_\eps^+(t) \\
&= \eps^\alpha e^{\lambda_+ t } \wt{\eta}_\eps (t),
\end{align*}
with 
\begin{equation}
\wt{\eta}_\eps (t)= \chi_{\eps,1} + \eps^{1-\alpha} {N}_\eps^+ (t).
\label{eq:eta-tilde} 
\end{equation}
Hence, the definition of $\tauw$ implies $\eps^{\alpha \delta_0} = \eps^{\alpha} e^{\lambda_+ \tauw} |\wt{\eta}_\eps (\tauw)|$, so that 
\[
\tauw = -\frac{\alpha}{\lambda_+} (1-\delta_0 ) \log\eps -\frac{1}{\lambda_+} \log |\wt{\eta}_\eps (\tauw)|.
\]
Due to~\eqref{eq:eta-tilde} and Lemma~\ref{lemma: ito_convergence}, the distributions of $\frac{1}{\lambda_+} \log |\wt{\eta}_\eps (\tauw)|$ form a tight family. Therefore,
\begin{equation}
\label{eq:tauh_grows}
\lim_{\eps \to 0} \Pp \left \{  \tauw > -(1-\delta_0^2)\frac{\alpha}{\lambda_+} \log \eps \right \}=0.
\end{equation}
This fact allows us to use Lemma~\ref{thm: estimate_non_linear} to estimate $Y_\eps$
up to $\tauh \wedge \tauw$.
From~\eqref{eqn: x_duhamel}, the difference $\Delta_\eps=Y_{\eps,1}-u_\eps$ is given by
\[
\Delta_\eps (t) = e^{\lambda_+ t} \int_0^t e^{ -\lambda_+ s}  H_1(Y_\eps(s),\eps ) ds. 
\]
We can use~\eqref{eq:tauh_grows} to justify the application of Lemma~\ref{thm: estimate_non_linear} up to time $\tauh \wedge \tauw$. Then, we combine Theorem~\ref{Lemma: Def_H1&H2}, Lemma~\ref{thm: estimate_non_linear}, and the definition of $\tauh$ to see that 
\begin{align*}
\sup_{t\leq \tauh \wedge \tauw} e^{- \lambda_+ t} |H_1(Y_{\epsilon}(t), \eps)|&\leq K_1\sup_{t\leq \tauh \wedge \tauw} \left( \left(e^{-\lambda_+ t} |Y_{\eps,1}(t)|\right) |Y_{\eps,1}(t)|\cdot|Y_{\eps,2}(t)|\right) + K_2 \eps^2 \\
&=O_\Pp \left( \eps^{\alpha + \alpha p }  \right)\\
\end{align*}
and 
\[
e^{\lambda_+ \tauh \wedge \tauw } = O_\Pp \left ( \eps^{ -\alpha (1- \delta_0^2)  } \right).
\]
These two estimates together with~\eqref{eq:tauh_grows} imply 
\[
\sup_{t\leq \tauh \wedge \tauw } | \Delta_\eps (t) | = O_\Pp \left (  \eps^{ \alpha (p+\delta_0^2)  } |\log\eps|\right).
\]
On one hand,~\eqref{eq:tauh_grows} implies
\[ 
\Pp\left(\left \{  \tauh > -\frac{\alpha}{\lambda_+} \log 
\eps \right \} \cap \{\tauh \leq \tauw \}\right)\to 0.
\] 
On the other hand, if  $ \tauh > \tauw$ then
\[
|Y_{\eps,1} (\tauw)| = \left| \eps^{ \alpha \delta_0 } + O_\Pp ( \eps ^{ \alpha ( p + \delta _0^2 )} |\log\eps|) \right|,
\]
and
\[
 |Y_{\eps,1}(\tauw)|< \eps^{\alpha p}. 
\]
These relations contradict each other for sufficiently small $\eps$ if we choose $\delta_0 < p$.
So, this choice of $\delta_0$ guarantees that
 $\Pp \left \{  \tauh > \tauw \right \} \to 0$ implying the result. 
\end{proof}

\bigskip

\begin{proof}[Proof of Lemma~\ref{prop: eta_convergence}] 
Recall that we work on the high probability event $\{  \tauh < \tau_\eps^U \}$.  Hence, for each $\epsilon >0$, we have
the identity 
\begin{equation*}
\epsilon ^{\alpha p}=\epsilon ^{\alpha }e^{\lambda _{+}\hat{\tau}_{\epsilon }%
}|\eta _{\epsilon }^{+}|.
\end{equation*}%
Solving for $\tauh$ and then plugging it back into $%
Y_{\epsilon,1 }$, we get%
\begin{align}
\hat{\tau}_{\epsilon } &=-\frac{\alpha }{\lambda _{+}}(1-p)\log \epsilon -%
\frac{1}{\lambda _{+}}\log |\eta _{\epsilon }^{+}|,
\label{eqn: tau_explicit} \\
Y_{\epsilon,1 }(\hat\tau _{\epsilon }) &=\epsilon ^{\alpha p} \sgn(\eta
_{\epsilon }^{+}).  \notag
\end{align}
Using this information we are in position to get the asymptotic behavior of the random variables $\eta
_{\epsilon }^{\pm }$. First, from relation \eqref{eqn: x_duhamel} we get
\begin{equation}
\eta _{\epsilon }^{+}=\chi_{\epsilon,1 }+\epsilon ^{-\alpha }\int_{0}^{
{\tauh}}e^{-\lambda _{+}s}H_{1}(Y_{\epsilon
}(s),\eps)ds+\epsilon ^{1-\alpha }{N}_{\epsilon }^{+}(\hat{\tau _{\epsilon }}%
). \label{eqn: eta+}
\end{equation}
Using \eqref{eqn: tau_explicit} in \eqref{eqn: y_duhamel} we get%
\begin{align}
\eta _{\epsilon }^{-} &=|\eta _{\epsilon }^{+}|^{\lambda _{-}/\lambda
_{+}}(y_{2}+\epsilon ^{\alpha }\chi _{\epsilon,2 })+|\eta _{\epsilon
}^{+}|^{\lambda _{-}/\lambda _{+}}\int_{0}^{\hat{\tau} _{\epsilon }%
}e^{\lambda _{-}s} H_{2}(Y_{\epsilon }(s),\eps)ds  \notag \\
&+\epsilon ^{1-\alpha (1-p)\lambda _{-}/\lambda _{+}}{N}_{\epsilon
}^{-}(\hat{\tau} _{\epsilon }).  \label{eqn: eta-}
\end{align}%

The main part of the proof is based on representations \eqref{eqn: tau_explicit}--\eqref{eqn: eta-}.

Lemma~\ref{prop: tau_bar_convergence} allows us to use the estimates established in
Lemma~\ref{thm: estimate_non_linear} up to time~$\tauh$. In particular, now we can
conclude that the family
\begin{equation}
\left( \eps^{-\alpha} \sup_{t \leq \tauh} |Y_{\eps,2} (t) - e^{-\lambda_- t} y_2| \right)_{\eps>0}
\label{eq:y_slowly_growing} 
\end{equation}
is slowly growing thus improving Lemma~\ref{lemma: y_convergence}.

To obtain the desired convergence for $\eta_\eps^+$, we analyze the r.h.s.\ of~\eqref{eqn: eta+}
term by term. The covergence of the first term was one of our assumptions. For the second one,
we need to estimate $H_1(Y_\eps,\eps)$. 
 Using Lemma~\ref{thm: estimate_non_linear}, the boundness of~$Y_{\eps,2}$ and the definition of  
$\tauh$, we see that
\begin{equation}
\sup_{t\leq \hat{\tau}_\eps} e^{-\lambda_+ t} Y_{\eps,1}^2 (t) |Y_{\eps,2} (t)| = O_\Pp (\eps^{\alpha+\alpha p} ). \label{eqn: eta+_stoch}
\end{equation}
This estimate and Theorem~\ref{Lemma: Def_H1&H2} imply that
\begin{align}
\eps^{-\alpha} \int_0^{\tauh} e^{-\lambda_+ s}
H_1(Y_\eps(s),\eps)ds &\leq K_1 \eps^{-\alpha} \int_0^{\tauh} e^{-\lambda_+ s} Y_{\eps,1}^2 (s) |Y_{\eps,2}(s)|ds 
 + \frac{K_2}{\lambda_+} \eps^{2-\alpha} \notag \\
& =O_\Pp ( \eps^{\alpha p} |\log\eps| ).\notag
\end{align}
Let us estimate the third term in \eqref{eqn: eta+}. We can use the last estimate along with \eqref{eqn: eta+} and Lemma~\ref{lemma: ito_convergence} to conclude that 
the distributions of positive part of $\lambda_+^{-1} \log | \eta_\eps^+|$ form a tight family.
Therefore, \eqref{eqn: tau_explicit} implies that 
\[
\tauh  \overset{\Pp}{\to} \infty,\quad \eps \to 0. 
\]
Combined with It\^o isometry  and Lemma~\ref{lemma: ito_convergence}, this implies
\[
{N}_{\epsilon }^{+}(\hat\tau _{\epsilon })\overset{L^{2}}{%
\longrightarrow }N^+,\quad\epsilon \rightarrow 0,
\]
which completes the analysis of $\eta_\eps^+$ and, due to~\eqref{eqn: tau_explicit}, of $\tauh$.

\medskip

To obtain the convergence of $\eta_\eps^-$, we study~\eqref{eqn: eta-}.  
Combining \eqref{eq:y_slowly_growing}, the inequality 
\[
|Y_{\eps,1}(t)| Y_{\eps,2}^2 (t) \leq 2 |Y_{\eps,1} (t)| \left( |Y_{\eps,2}(t)-e^{-\lambda_- t}y_2|^2 + e^{ -2 \lambda_- t} y_2^2 \right ),
\] and the definition of $\tauh$ we see that for any $q\in(0,\alpha p)$,
\[
\sup_{t \leq \hat\tau_\eps }e^{\lambda_- t } |Y_{\eps,1}(t)| Y_{\epsilon,2 }^2(t) =O_\Pp \left( \eps^{\alpha p + \alpha - q} e^{\lambda_- \tauh }+\epsilon ^{\alpha p } \right ).
\]
Hence, as a consequence of Theorem~\ref{Lemma: Def_H1&H2} and~\eqref{eqn: tau_explicit} we have  
\begin{align*}
\int_{0}^{\tauh}e^{\lambda _{-}s}  H_{2}(Y_{\epsilon }(s),\eps)ds &=O_\Pp \left(\left(\epsilon ^{\alpha p -q + \alpha }e^{\lambda _{-}\tauh} + \epsilon ^{\alpha p }\right)|\log \epsilon |\right) \\
&=O_{\mathbf{P}} \left(\left(\epsilon ^{\alpha (1-(1-p)\lambda _{-}/\lambda _{+}) + (\alpha p - q)} + \epsilon ^{\alpha p}\right)|\log \epsilon | \right).
\end{align*}%
Combining this and Lemma~\ref{lemma: stoch_est} in \eqref{eqn: eta-} we obtain
\begin{align*}
\eta _{\epsilon }^{-} &=|\eta _{\epsilon }^{+}|^{\lambda _{-}/\lambda
_{+}}y_2+O_{\mathbf{P}}(\eps^\alpha)+O_{\mathbf{P}} \left(\left(\epsilon ^{\alpha (1-(1-p)\lambda _{-}/\lambda _{+}) + (\alpha p - q)}+\epsilon ^{\alpha p}\right)|\log \epsilon | \right) \\
& + O_{\mathbf{P}}\left(\epsilon ^{1-\alpha (1-p)\lambda _{-}/\lambda _{+} - q}\right)
\end{align*}
which finishes the proof of Lemma~\ref{prop: eta_convergence} by choosing $q$ small enough.
\end{proof}

\section{Proof of Lemma~\ref{Thm:after_tauh}}\label{sec:unstable_manifold}

Consider the solution to system~\eqref{eq:SDE_changed_coord1}--\eqref{eq:SDE_changed_coord2}
equipped with initial conditions~\eqref{eq:restart_at_tauh} satisfying~\eqref{eq:condition_for_theorem_along_unstable}.
Let us restrict the analysis to the arbitrary high probability event 
\[
\{ |\eta _{\epsilon }^{\pm }|\leq K_{\pm } \},
\]
for some constants $K_\pm>0$.

\begin{lemma}
\label{lemma: y_after_global} Let $p\in (0,1)$ satisfy~\eqref{eqn: p_prop}, and
let $(t_\eps)_{\eps>0} $ be a slowly growing family of stopping times. Consider $t_\eps'=t_\eps \wedge \tau_\eps^U$, then for any $\gamma >0$,
\begin{equation*}
\lim_{\epsilon \rightarrow 0}\mathbf{P}\left\{\sup_{t\leq t_\eps' }|Y_{\epsilon,2
}(t)|\leq (K_{-}+\gamma )\epsilon ^{\alpha (1-p)\lambda _{-}/\lambda
_{+}}\right\}=1.
\end{equation*}
\end{lemma}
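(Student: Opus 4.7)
The plan is to combine Duhamel's principle for $Y_{\eps,2}$ with a standard bootstrapping/stopping-time argument. Write $\rho = \alpha(1-p)\lambda_-/\lambda_+$ for brevity; note that $\rho < \alpha \le 1$ because the first inequality in \eqref{eqn: p_prop} gives $1-p < \lambda_+/\lambda_-$. From \eqref{eq:SDE_changed_coord2}, Duhamel's formula reads
\[
Y_{\eps,2}(t) = e^{-\lambda_- t}Y_{\eps,2}(0) + \int_0^t e^{-\lambda_-(t-s)} H_2(Y_\eps(s),\eps)\,ds + \eps N_\eps^-(t),
\]
with $N_\eps^-$ as in \eqref{eqn: def_N-}. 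Since $Y_{\eps,2}(0)=\eps^\rho\eta_\eps^-$ and we work on $\{|\eta_\eps^-|\le K_-\}$, the initial term is bounded in absolute value by $K_-\eps^\rho$ for all $t\ge 0$.

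Introduce the stopping time
\[
\beta_\eps = \inf\{t\ge 0 : |Y_{\eps,2}(t)|\ge (K_-+\gamma)\eps^\rho\}\wedge t_\eps'.
\]
For $t\le\beta_\eps$ the definition gives $|Y_{\eps,2}(t)|\le (K_-+\gamma)\eps^\rho$, and since $t\le\tau_\eps^U$ we also have $|Y_{\eps,1}(t)|\le\delta$. Theorem \ref{Lemma: Def_H1&H2} then yields
\[
|H_2(Y_\eps(s),\eps)|\le K_1\delta(K_-+\gamma)^2\eps^{2\rho}+K_2\eps^2,
\]
so the drift integral in the Duhamel representation is $O(\eps^{2\rho}+\eps^2)=O(\eps^{2\rho})$ uniformly in $t\le\beta_\eps$, using $2\rho<2$. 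For the stochastic term, $t_\eps'\le t_\eps$ is slowly growing, so Lemma \ref{lemma: stoch_est} gives that $S_\eps^-(t_\eps')$ is slowly growing; picking any $r\in(0,1-\rho)$ one obtains $\sup_{t\le t_\eps'}\eps|N_\eps^-(t)|=O_{\Pp}(\eps^{1-r})$ with $1-r>\rho$.

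Combining the three bounds,
\[
\sup_{t\le\beta_\eps}|Y_{\eps,2}(t)|\le K_-\eps^\rho + C\eps^{2\rho} + O_{\Pp}(\eps^{1-r}),
\]
and for all sufficiently small $\eps$ the right-hand side is strictly below $(K_-+\gamma)\eps^\rho$ with probability arbitrarily close to $1$. By continuity of paths, on $\{\beta_\eps < t_\eps'\}$ we would have $|Y_{\eps,2}(\beta_\eps)|=(K_-+\gamma)\eps^\rho$, contradicting the bound just derived. Hence $\Pp\{\beta_\eps<t_\eps'\}\to 0$, which is exactly the conclusion of the lemma. The only delicate step is the self-consistent bootstrapping: one must verify that, on $\{t\le\beta_\eps\}$, the assumed bound $|Y_{\eps,2}|\le(K_-+\gamma)\eps^\rho$ feeds through $H_2$ to produce a strictly better bound, and the first inequality in \eqref{eqn: p_prop} is precisely what forces both exponents $2\rho$ and $1-r$ to exceed $\rho$.
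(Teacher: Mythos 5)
Your proof is correct and follows essentially the same route as the paper: Duhamel's representation, a stopping-time self-consistency bound yielding $O(\eps^{2\rho})$ for the drift integral, the observation that $\rho<\alpha\le1$ (from the first inequality in~\eqref{eqn: p_prop}) makes the stochastic term $o_\Pp(\eps^{\rho})$, and the continuity-contradiction at $\beta_\eps$. The only cosmetic difference is that the paper packages the $\eps^2\Psi_2$ contribution into the auxiliary process $M_\eps$ while you keep it inside $H_2$ and bound it by $K_2\eps^2$, which is equivalent.
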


\begin{proof}
Let $\gamma >0$. We recall that  $N_\eps^-$ is defined in~\eqref{eqn: def_N-} and introduce the process
\begin{equation}
M_{\epsilon }(t) ={N}_{\epsilon }^{-}(t)+\epsilon
\int_{0}^{t}e^{-\lambda_- (t-s)}\Psi _{2}(Y_{\epsilon }(s))ds, \label{eqn: M_def}
\end{equation}
where $\Psi_2$ was introduced in Theorem~\ref{Lemma: Def_H1&H2},
and the stopping time 
\begin{equation*}
\beta _{\epsilon }=\inf \left\{t:|Y_{\epsilon,2 }(t)|>(K_{-}+\gamma )\epsilon
^{\alpha (1-p)\lambda _{-}/\lambda _{+}}\right\}.
\end{equation*}%
Using the fact that $Y_{\epsilon,1 }$ is bounded, it is easy to see that
there is a constant $%
K_{\lambda _{-}}$ independent of $t$, so that for any $t\leq \beta_\eps\wedge t_\eps'$,
we have
\begin{equation*}
\int_{0}^{t}e^{-\lambda _{-}(t-s)}|Y_{\epsilon,1 }(s)|Y_{\epsilon,2}^2(s)ds\leq K_{\lambda _{-}}\epsilon ^{ 2\alpha
(1-p)\lambda _{-}/\lambda _{+}}.
\end{equation*}%
This estimate, along with Duhamel's principle and Theorem~\ref{Lemma: Def_H1&H2} implies that for some constant $C>0$ and any $t \leq \beta_\epsilon \wedge t_\eps'$,%
\begin{align*}
|Y_{\epsilon,2 }(t)| &\leq \epsilon ^{\alpha (1-p)\lambda _{-}/\lambda
_{+}}|\eta^-_{\epsilon }|+K_1\int_{0}^{t}e^{-\lambda _{-}(t-s)}|Y_{\eps,1}(s)|Y_{\epsilon,2 }^2 (s)ds+\epsilon \sup_{t \leq \beta_\eps }|M_{\epsilon }(t)| \\
&\leq \epsilon ^{\alpha (1-p)\lambda _{-}/\lambda _{+}}K_{-}+C \epsilon ^{2\alpha (1-p)\lambda _{-}/\lambda _{+}}+\epsilon
\sup_{t\leq \beta _{\epsilon }}|M_{\epsilon }(t)|.
\end{align*}%
Hence, using Lemma \ref{lemma: stoch_est} to estimate $M_\eps$, we obtain that
\begin{align*}
\mathbf{P}\{\beta _{\epsilon } < t_\eps' \} &=\mathbf{P}\left\{\sup_{t \leq \beta_{\epsilon } \wedge t_\eps' } |Y_{\epsilon,2 }(t)| \geq (K_{-}+\gamma )\epsilon ^{\alpha
(1-p)\lambda _{-}/\lambda _{+}} \right\} \\
&\leq \mathbf{P}\left\{C\epsilon ^{\alpha (1-p)\lambda_{-}/ \lambda _{+}}+\epsilon ^{1-\alpha (1-p)\lambda _{-}/\lambda
_{+}}\sup_{t\leq \beta _{\epsilon }}|M_{\epsilon }(t)|\geq \gamma
 \right\}
\end{align*}%
converges to $0$ as $\eps \rightarrow 0$ proving the lemma.
\end{proof}

\begin{lemma}
\label{lemma: x_after_global}Under the assumptions of lemma \ref{lemma:
y_after_global}, for any $\rho \in (0,\frac{\alpha p}{%
\lambda _{+}}] $, $\gamma >0$, and $C>0$, define $\rho_\eps~=(-\rho \log \epsilon+C )\wedge \tau_\eps^U $. Then,  we have
\begin{equation*}
\lim_{\epsilon \rightarrow 0}\mathbf{P}\left\{\sup_{t\leq\rho_\eps}|Y_{\epsilon,1 }(t)|e^{-\lambda _{+}t}\leq (1+\gamma )\epsilon ^{\alpha
p}\right\}=1.
\end{equation*}
\end{lemma}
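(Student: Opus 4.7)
The plan is to run a continuity (bootstrap) argument for $e^{-\lambda_+ t}|Y_{\eps,1}(t)|$ based on Duhamel's principle applied to~\eqref{eq:SDE_changed_coord1}, which gives
\[
e^{-\lambda_+ t} Y_{\eps,1}(t) = Y_{\eps,1}(0) + \int_0^t e^{-\lambda_+ s} H_1(Y_\eps(s),\eps)\,ds + \eps N_\eps^+(t).
\]
Note that the initial condition~\eqref{eq:restart_at_tauh} gives exactly $|Y_{\eps,1}(0)| = \eps^{\alpha p}$, so the conclusion is an assertion that this quantity grows by at most a factor of $1+\gamma$ on the time window $[0,\rho_\eps]$. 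I would introduce the stopping time
\[
\beta_\eps = \inf\bigl\{t \ge 0 : e^{-\lambda_+ t}|Y_{\eps,1}(t)| > (1+\gamma)\eps^{\alpha p}\bigr\},
\]
so that it suffices to prove $\Pp\{\beta_\eps < \rho_\eps\} \to 0$ as $\eps \to 0$.

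The key inputs will be two a priori estimates valid on $[0,\beta_\eps \wedge \rho_\eps]$ with probability tending to $1$: first, since $\rho_\eps \le -\rho\log\eps + C$ is a slowly growing family of stopping times, Lemma~\ref{lemma: y_after_global} yields $\sup_{t\le\rho_\eps}|Y_{\eps,2}(t)| \le (K_-+\gamma)\eps^{\alpha(1-p)\lambda_-/\lambda_+}$; second, the definition of $\beta_\eps$ gives $|Y_{\eps,1}(t)| \le (1+\gamma)\eps^{\alpha p} e^{\lambda_+ t}$ for $t \le \beta_\eps$. Feeding these into the bound $|H_1(y,\eps)| \le K_1 y_1^2 |y_2| + K_2 \eps^2$ from Theorem~\ref{Lemma: Def_H1&H2}, I obtain, for $t \le \beta_\eps \wedge \rho_\eps$,
\begin{align*}
\int_0^t e^{-\lambda_+ s}|H_1(Y_\eps(s),\eps)|\,ds
&\le \frac{K_1(1+\gamma)^2(K_-+\gamma)}{\lambda_+}\,\eps^{2\alpha p + \alpha(1-p)\lambda_-/\lambda_+}\, e^{\lambda_+ t} + K_2 \eps^2 \rho_\eps.
\end{align*}
Since $\rho \le \alpha p/\lambda_+$ forces $e^{\lambda_+ \rho_\eps} \le e^C \eps^{-\alpha p}$, the first term is $O(\eps^{\alpha p + \alpha(1-p)\lambda_-/\lambda_+}) = o(\eps^{\alpha p})$; the second, bounded by $O(\eps^2|\log\eps|)$, is also $o(\eps^{\alpha p})$ because $\alpha p < 2$. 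For the stochastic term, Lemma~\ref{lemma: stoch_est} applied to the stopping time $\rho_\eps$ gives $\eps \sup_{t\le\rho_\eps}|N_\eps^+(t)| = O_\Pp(\eps) = o_\Pp(\eps^{\alpha p})$ since $\alpha p < 1$.

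Combining these estimates, on the intersection of the high-probability events above, for every $t \le \beta_\eps \wedge \rho_\eps$ we have $e^{-\lambda_+ t}|Y_{\eps,1}(t)| \le \eps^{\alpha p}(1 + o_\Pp(1)) < (1+\gamma)\eps^{\alpha p}$ once $\eps$ is small enough. By continuity of $t \mapsto Y_{\eps,1}(t)$, this is incompatible with $\beta_\eps < \rho_\eps$, so $\Pp\{\beta_\eps < \rho_\eps\} \to 0$ as required. I expect the only delicate point to be the bookkeeping of the exponents: the hypothesis $\rho \le \alpha p/\lambda_+$ is exactly what lets the growth factor $e^{\lambda_+\rho_\eps}$ absorb only a single power $\eps^{-\alpha p}$, while the extra decay $\eps^{\alpha(1-p)\lambda_-/\lambda_+}$ of $Y_{\eps,2}$ delivered by Lemma~\ref{lemma: y_after_global} is precisely what makes the nonlinear contribution strictly subdominant to the linear prediction $\eps^{\alpha p}$.
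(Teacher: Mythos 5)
Your proof is correct and follows essentially the same strategy as the paper: a bootstrap via the stopping time $\beta_\eps$, Duhamel's principle for $Y_{\eps,1}$, the a priori bound on $Y_{\eps,2}$ from Lemma~\ref{lemma: y_after_global}, and Lemma~\ref{lemma: stoch_est} for the martingale term. The only cosmetic difference is in bounding the nonlinear integral: the paper keeps one factor of $Y_{\eps,1}$ bounded by $\delta'$ and treats the integrand as constant (giving a harmless extra $|\log\eps|$ factor), whereas you integrate $e^{\lambda_+ s}$ explicitly and use $\rho\le\alpha p/\lambda_+$ to control $e^{\lambda_+\rho_\eps}$, which avoids the logarithm; both bounds are comfortably $o(\eps^{\alpha p})$.
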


\begin{proof}
Define the stopping time 
\begin{equation*}
\beta _{\epsilon }=\inf \left\{t:|Y_{\epsilon,1 }(t)|e^{-\lambda _{+}t}\geq
(1+\gamma )\epsilon ^{\alpha p}\right\}.
\end{equation*}%
As a consequence of Duhamel's principle and Theorem~\ref{Lemma: Def_H1&H2} we get the bound 
\begin{align*}
\sup_{t\leq \beta _{\epsilon }\wedge \rho_\eps }  |Y_{\epsilon,1}(t)|e^{-\lambda _{+}t} 
\leq &\epsilon ^{\alpha p}+K_{1}\int_{0}^{\beta _{\epsilon }\wedge \rho_\eps}e^{-\lambda _{+}s}Y_{\epsilon,1}^{2}(s)|Y_{\epsilon,2}(s)|ds\\
&\quad +\epsilon ^{2}K_{2}\lambda _{+}^{-1} +\epsilon S_{\epsilon }^{+}(\beta_\eps ). 
\end{align*}
This estimate together with Lemma~\ref{lemma: y_after_global}, Lemma~~\ref{lemma: stoch_est} and the defintion of $\rho_\eps$ implies that for any small $\delta>0$ we can find a constant $K>0$, so that with probability bigger than $1-\delta$, the inequalities
\begin{align*}
\sup_{t\leq \beta _{\epsilon }\wedge \rho_\eps }  |Y_{\epsilon,1}(t)|e^{-\lambda _{+}t}
&\leq \epsilon ^{\alpha p}+K\epsilon ^{\alpha p+\alpha (1-p)\lambda
_{-}/\lambda _{+}}(\beta_\eps\wedge \rho_\eps)+K\eps \\
&\leq \epsilon ^{\alpha p}(1+2K\rho \epsilon ^{\alpha (1-p)\lambda
_{-}/\lambda _{+}}|\log \epsilon |+K \epsilon ^{1-\alpha p}),
\end{align*}
hold  for all $\epsilon >0$ small enough. 
Hence, for any small enough $\eps>0$,
\begin{align*}
\mathbf{P}\left\{\beta _{\epsilon } <\rho_\eps\right\} &=\mathbf{P}%
\left\{\sup_{t\leq \beta _{\epsilon }\wedge \rho_\eps}|Y_{\epsilon,1
}(t)|e^{-\lambda _{+}t}\geq (1+\gamma )\epsilon ^{\alpha p}\right\} \\
&\leq \mathbf{P}\left\{K\rho \epsilon ^{\alpha (1-p)\lambda _{-}/\lambda _{+}}|\log
\epsilon |+K\epsilon ^{1-\alpha p}\geq \gamma \right\} + \delta,
\end{align*}%
which implies the result.
\end{proof}

\medskip

The following is an important consequence of Lemma~\ref{lemma: y_after_global}:
\begin{corollary} With $\tau_\eps$ as in~\eqref{eqn: tau_def} it holds that
\[
\lim_{\eps \to 0} \Pp \{ \tau_\eps^U < \tau_\eps \}=0.
\]
In particular,~\eqref{eq:exit_through_unstabale} holds.
\end{corollary}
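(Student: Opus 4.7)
The event $\{\tau_\eps^U < \tau_\eps\}$ is exactly the event that $Y_\eps$ exits $U=f^{-1}((-\delta,\delta)^2)$ through one of the horizontal faces $\{|y_2|=\delta\}$ before reaching the vertical faces $\{|y_1|=\delta\}$. My plan is to rule this out by showing that on $[0,\tau_\eps^U]$ the second coordinate $|Y_{\eps,2}(t)|$ stays of order $\eps^{\alpha(1-p)\lambda_-/\lambda_+}$, which is far below $\delta$ for small $\eps$. Lemma~\ref{lemma: y_after_global} already delivers such a bound, but only up to $t_\eps\wedge\tau_\eps^U$ for a prescribed slowly growing family $t_\eps$. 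So the first task will be to produce a slowly growing deterministic upper bound on $\tau_\eps^U$ itself.

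To that end I would fix $T_\eps=-\tfrac{\alpha p}{\lambda_+}\log\eps + M$ with a constant $M$ to be chosen, and argue by contradiction on the event $\{\tau_\eps^U > T_\eps\}$. On that event both coordinates remain in $(-\delta,\delta)$ on $[0,T_\eps]$, and Lemma~\ref{lemma: y_after_global} applied with $t_\eps=T_\eps$ gives $\sup_{t\le T_\eps}|Y_{\eps,2}(t)|\le (K_-+1)\eps^{\alpha(1-p)\lambda_-/\lambda_+}$ with high probability. Feeding this into the estimate $|H_1(Y_\eps,\eps)|\le K_1 Y_{\eps,1}^2|Y_{\eps,2}|+K_2\eps^2$ from Theorem~\ref{Lemma: Def_H1&H2}, and invoking the key exponent inequality $\alpha(1-p)\lambda_-/\lambda_+ > \alpha p$ from the second half of~\eqref{eqn: p_prop}, I expect
\[
\int_0^{T_\eps} e^{-\lambda_+ s}|H_1(Y_\eps(s),\eps)|\,ds = o(\eps^{\alpha p}),
\]
while Lemma~\ref{lemma: stoch_est} forces $\eps|N_\eps^+(T_\eps)|=O_\Pp(\eps)=o(\eps^{\alpha p})$. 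Combined with the identity $|Y_{\eps,1}(0)|=\eps^{\alpha p}$ from~\eqref{eq:restart_at_tauh}, Duhamel's formula~\eqref{eqn: x_duhamel} then yields $|Y_{\eps,1}(T_\eps)|\ge \tfrac12 e^{\lambda_+ M}$ with high probability. Choosing $M$ with $e^{\lambda_+ M}>2\delta$ contradicts $|Y_{\eps,1}(T_\eps)|<\delta$, so $\Pp\{\tau_\eps^U>T_\eps\}\to 0$.

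Combining this with Lemma~\ref{lemma: y_after_global} applied to $t_\eps = T_\eps$ gives, simultaneously with high probability, $\tau_\eps^U\le T_\eps$ and $\sup_{t\le\tau_\eps^U}|Y_{\eps,2}(t)|<\delta$. Since $Y_\eps(\tau_\eps^U)\in\partial(-\delta,\delta)^2$, the coordinate attaining absolute value $\delta$ at time $\tau_\eps^U$ must be the first one, so $|Y_{\eps,1}(\tau_\eps^U)|=\delta$ and hence $\tau_\eps^U\ge\tau_\eps$; the reverse inequality is automatic, so $\tau_\eps^U=\tau_\eps$ with high probability, which also delivers~\eqref{eq:exit_through_unstabale}. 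The main obstacle will be the Duhamel lower bound on $|Y_{\eps,1}|$ in the middle paragraph: one has to track carefully how~\eqref{eqn: p_prop} makes each error term genuinely negligible compared to the initial amplitude $\eps^{\alpha p}$, uniformly over the growing interval $[0,T_\eps]$.
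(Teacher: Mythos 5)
Your proposal is correct, and it makes explicit an argument the paper leaves entirely implicit: the paper simply asserts the corollary as an ``important consequence'' of Lemma~\ref{lemma: y_after_global} with no further detail. You correctly identify the subtlety that Lemma~\ref{lemma: y_after_global} only controls $Y_{\eps,2}$ on $[0,t_\eps\wedge\tau_\eps^U]$ for a \emph{prescribed} slowly growing $t_\eps$, so one still needs a slowly growing a priori bound on $\tau_\eps^U$ before the bound on $|Y_{\eps,2}|$ can be transferred to the whole interval $[0,\tau_\eps^U]$ (one cannot just take $t_\eps=\tau_\eps$, since on the very event to be excluded $\tau_\eps=\infty$). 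Your Duhamel comparison on the deterministic horizon $T_\eps=-\frac{\alpha p}{\lambda_+}\log\eps+M$ supplies exactly that: starting from $|Y_{\eps,1}(0)|=\eps^{\alpha p}$, the drift contribution $\int_0^{T_\eps}e^{-\lambda_+s}|H_1|\,ds$ is $O(\eps^{\alpha(1-p)\lambda_-/\lambda_+}+\eps^2)=o(\eps^{\alpha p})$ by the second half of~\eqref{eqn: p_prop} together with the Lemma~\ref{lemma: y_after_global} bound on $|Y_{\eps,2}|$ and the trivial bound $|Y_{\eps,1}|<\delta$, and $\eps\,S_\eps^+=O_\Pp(\eps)=o(\eps^{\alpha p})$ since $\alpha p<1$; multiplying by $e^{\lambda_+T_\eps}=\eps^{-\alpha p}e^{\lambda_+M}$ and choosing $e^{\lambda_+M}>2\delta$ forces an exit before $T_\eps$. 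Combining this with the Lemma~\ref{lemma: y_after_global} bound then excludes exit through $|y_2|=\delta$ with probability tending to~$1$. All three high-probability events (the $Y_{\eps,2}$ bound, the martingale bound, and $\{\tau_\eps^U\le T_\eps\}$) intersect cleanly. This is essentially the only reasonable way to close the gap, and your account of where~\eqref{eqn: p_prop} enters is exactly right.
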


From now on, we restrict our analysis to the high probability event $\{ \tau_\eps^U \geq \tau_\eps \}$.

Let $\theta _{\epsilon }^{+}=\epsilon ^{-\alpha p}e^{-\lambda _{+} \tau _\eps}Y _{\epsilon,1 }(\tau _{\epsilon })$.
Then, \eqref{eqn: tau_def} implies  
\begin{equation}
\tau _{\epsilon } =-\frac{\alpha p}{\lambda _{+}}\log \epsilon +\frac{1}{%
\lambda _{+}}\log \frac{\delta }{|\theta _{\epsilon }^{+}|},
\label{eqn: sigma_equal} 
\end{equation}
and
\[
Y _{\epsilon,1 }(\tau_{\epsilon }) =\delta \sgn\theta _{\epsilon }^{+}.
\]
Our analysis of these expressions will be based on the next formula which directly follows 
from Duhamel's principle: 
\begin{equation}
\theta _{\epsilon }^{+}=\sgn\eta_{\epsilon }^{+}+\epsilon ^{-\alpha
p}\int_{0}^{\tau _{\epsilon }}e^{-\lambda _{+}s} H_{1}(Y_\eps(s),\eps)ds+\epsilon ^{1-\alpha p}N_{\epsilon
}^{+}(\tau _{\epsilon }).  \label{eqn: theta+}
\end{equation}%

The main term in the r.h.s.\ of~\eqref{eqn: theta+} is $\sgn\eta_{\epsilon }^{+}$. We need to estimate the other two terms. Lemma~\ref{lemma: stoch_est} implies that $\epsilon ^{1-\alpha p} N_{\epsilon}^{+}(\tau _{\epsilon })$ converges to $0$ in probability as $\eps\to 0$. 
Let us now estimate the integral term. 
Relations~\eqref{eqn: sigma_equal} and~\eqref{eqn: theta+} imply that $(\tau_\eps)_{\eps>0}$ is slowly growing, and we can use 
Lemma~\ref{lemma: y_after_global} to derive
\begin{equation}
\sup_{t\leq \tau_{\epsilon }}|Y _{\epsilon,2 }(t)|=O_{\mathbf{P}}(\epsilon
^{\alpha (1-p)\lambda _{-}/\lambda _{+}}). 
\label{eqn:bound_on_nu_in_probability}
\end{equation}
We can now use Theorem~\ref{Lemma: Def_H1&H2} to conclude that
\begin{equation*}
\epsilon^{-\alpha p} \sup_{t \leq \tau_\epsilon } | H_1 (Y_\epsilon(t),\eps)|=O_{\mathbf{P}} ( \epsilon^{ \alpha(1-p)\lambda _{-}/\lambda _{+} -\alpha p}+\eps^{2-\alpha p}),
\end{equation*}
and (\ref{eqn: p_prop}) implies that the r.h.s.\ converges to $0$. Therefore,

\[
\epsilon^{-\alpha p} \int_{0}^{\tau _{\epsilon }}e^{-\lambda _{+}s} H_{1}(Y_{\epsilon}(s),\eps)ds
\stackrel{\Pp}{\longrightarrow}0.
\]
The above analysis of equation~\eqref{eqn: theta+} implies that if we define $\theta _{0}^{+}=\sgn\eta _{0}^{+}$, then
\begin{align}
 \theta_\eps^+&\stackrel{\mathop{Law}}{\longrightarrow} \theta_0^+,\label{eqn:convergence_of_theta}
\end{align}
which implies~\eqref{eq:tau_eps_convergence} due to~\eqref{eqn: sigma_equal}.
It remains to prove~\eqref{eq:asymptotics_for_Y_2_tau}.

Duhamel's principle along with (\ref{eqn: sigma_equal}) yields
\begin{equation}
Y_{\epsilon,2 }(\tau _{\epsilon })=\left( \frac{|\theta _{\epsilon }^{+}|}{
\delta }\right) ^{\lambda _{-}/\lambda _{+}}\epsilon ^{\alpha \lambda
_{-}/\lambda _{+}}
\eta _{\epsilon }^{-}+\int_{0}^{\tau _{\epsilon
}}e^{-\lambda _{-}(\tau_\eps-s)} H_{2}(Y _{\epsilon}(s),\eps)ds+\epsilon 
N_{\epsilon }^{-}(\tau _{\epsilon }).
\label{eqn:duhamel-at-tau-eps}
\end{equation}
In order to study the convergence of $N_\eps^-(\tau_\eps)$ we first give a preliminary result.
\begin{lemma}
\label{lemma: x_approx}
\begin{equation*}
\sup_{t\leq \tau _{\epsilon }}|Y_{\epsilon,1 }(t)-\epsilon ^{\alpha p}e^{\lambda _{+} t} \sgn\eta _{\epsilon }^{+}| \stackrel{\Pp}{\longrightarrow} 0, \quad \eps \to 0.
\end{equation*}%

\end{lemma}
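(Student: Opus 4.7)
The plan is to apply Duhamel's principle to $Y_{\eps,1}$ with the initial condition from~\eqref{eq:restart_at_tauh} and then show that the two resulting error terms --- a drift remainder and a scaled stochastic integral --- both vanish uniformly on $[0,\tau_\eps]$ in probability. Concretely, from~\eqref{eqn: x_duhamel} with $Y_{\eps,1}(0)=\eps^{\alpha p}\sgn\eta^+_\eps$,
\[
Y_{\eps,1}(t)-\eps^{\alpha p}e^{\lambda_+ t}\sgn\eta^+_\eps
=\int_0^t e^{\lambda_+(t-s)}H_1(Y_\eps(s),\eps)\,ds+\eps\, e^{\lambda_+ t}N^+_\eps(t),
\]
so the claim reduces to showing that each of these two terms is $o_\Pp(1)$ uniformly in $t\le\tau_\eps$.

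The three ingredients I intend to use are already in place: (i) from~\eqref{eqn: sigma_equal} and the convergence $\theta^+_\eps\to\sgn\eta^+_0\neq0$ a.s.\ in~\eqref{eqn:convergence_of_theta} (using that $\eta^+_0$ has no atom at $0$), one obtains $e^{\lambda_+\tau_\eps}=O_\Pp(\eps^{-\alpha p})$; (ii) Lemma~\ref{lemma: x_after_global} gives a pointwise a priori bound $|Y_{\eps,1}(s)|\le c_\eps\,\eps^{\alpha p}e^{\lambda_+ s}$ on $[0,\tau_\eps]$ with $c_\eps=O_\Pp(1)$; and (iii) Lemma~\ref{lemma: y_after_global} gives $\sup_{s\le\tau_\eps}|Y_{\eps,2}(s)|=O_\Pp(\eps^{\alpha(1-p)\lambda_-/\lambda_+})$.

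The stochastic term is handled at once by the trivial majorisation
\[
\sup_{t\le\tau_\eps}\bigl|\eps\, e^{\lambda_+ t}N^+_\eps(t)\bigr|\le \eps\, e^{\lambda_+\tau_\eps}\,S^+_\eps(\tau_\eps)=O_\Pp(\eps^{1-\alpha p})=o_\Pp(1),
\]
where Lemma~\ref{lemma: stoch_est} controls $S^+_\eps(\tau_\eps)$. For the drift term, pull $e^{\lambda_+ t}$ outside the integral, apply the product estimate $|H_1(y,\eps)|\le K_1 y_1^2|y_2|+K_2\eps^2$ from Theorem~\ref{Lemma: Def_H1&H2}, and insert the bounds (ii)--(iii). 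The elementary computation
\[
\sup_{t\le\tau_\eps}\Bigl|\int_0^t e^{\lambda_+(t-s)}H_1\,ds\Bigr|\;=\;O_\Pp\!\left(e^{2\lambda_+\tau_\eps}\eps^{2\alpha p+\alpha(1-p)\lambda_-/\lambda_+}+\eps^2 e^{\lambda_+\tau_\eps}\right)
\]
followed by (i) simplifies to $O_\Pp(\eps^{\alpha(1-p)\lambda_-/\lambda_+}+\eps^{2-\alpha p})=o_\Pp(1)$.

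The only subtle point is the bookkeeping of $\eps$-powers: every factor $e^{\lambda_+\tau_\eps}$ introduces a blow-up $\eps^{-\alpha p}$ that must be absorbed by positive powers of $\eps$ gathered from the a priori bounds on $(Y_{\eps,1},Y_{\eps,2})$. The inequalities defining~\eqref{eqn: p_prop} are tailored precisely so that the surviving exponents $\alpha(1-p)\lambda_-/\lambda_+$, $2-\alpha p$, and $1-\alpha p$ are all strictly positive, so no finer control on $H_1$ beyond the polynomial bound of Theorem~\ref{Lemma: Def_H1&H2} is required.
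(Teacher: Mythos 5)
Your argument is correct and is essentially the paper's one-line proof spelled out: decompose $Y_{\eps,1}(t)-\eps^{\alpha p}e^{\lambda_+t}\sgn\eta^+_\eps$ via Duhamel's formula, absorb the $e^{\lambda_+\tau_\eps}=O_\Pp(\eps^{-\alpha p})$ blow-up (from~\eqref{eqn: sigma_equal} and~\eqref{eqn:convergence_of_theta}) using the envelopes supplied by Lemmas~\ref{lemma: x_after_global} and~\ref{lemma: y_after_global}, and dispose of the stochastic remainder with Lemma~\ref{lemma: stoch_est}. One small overstatement in your closing remark: the surviving exponents $\alpha(1-p)\lambda_-/\lambda_+$, $2-\alpha p$, and $1-\alpha p$ are already strictly positive for every $p\in(0,1)$ and $\alpha\in(0,1]$, so the constraints in~\eqref{eqn: p_prop} are not the binding ones for this particular estimate (they are needed elsewhere in the proof of Lemma~\ref{Thm:after_tauh}).
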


\begin{proof}
The lemma follows from Duhamel's principle and Lemma~\ref{lemma: x_after_global}.
\end{proof}

The following result is essentially Lemma 8.9 from~\cite{nhn}. It holds true in our
setting since its proof is based only on the conclusion of Lemma~\ref{lemma: x_approx}. 

\begin{lemma}
\label{lemma: stoch_convergence}As $\epsilon \rightarrow 0$, 
\begin{equation*}
N_{\epsilon }^{-}(\tau_{\epsilon })\overset{Law}{%
\longrightarrow }N,
\end{equation*}
where $N$ is the Gaussian random variable in~\eqref{eq:theta}.
\end{lemma}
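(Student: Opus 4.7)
The plan is to follow the strategy of Lemma 8.9 of~\cite{nhn}, using our Lemma~\ref{lemma: x_approx} as the key approximation. \textbf{Step 1 (replace the random integrand).} Lemma~\ref{lemma: x_approx} gives $Y_{\eps,1}(s)\approx \eps^{\alpha p}e^{\lambda_+ s}\sgn\eta_\eps^+$ uniformly on $[0,\tau_\eps]$, while Lemma~\ref{lemma: y_after_global} forces $\sup_{s\le\tau_\eps}|Y_{\eps,2}(s)|$ to vanish. Moreover, \eqref{eqn: sigma_equal} together with~\eqref{eqn:convergence_of_theta} and $|\theta_0^+|=1$ imply $\eps^{\alpha p}e^{\lambda_+\tau_\eps}\stackrel{\Pp}{\to}\delta$. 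Combining these with the Lipschitz continuity of $\sgm_2$ yields
\[
\sup_{s\le\tau_\eps}\bigl|\sgm_2(Y_\eps(s))-\sgm_2\bigl(\delta e^{\lambda_+(s-\tau_\eps)}\sgn\eta_\eps^+,0\bigr)\bigr|\stackrel{\Pp}{\to}0,
\]
and an application of BDG together with It\^o's isometry to the exponentially weighted stochastic integral shows that $N_\eps^-(\tau_\eps)-\widetilde N_\eps$ converges to $0$ in probability, where
\[
\widetilde N_\eps = \int_0^{\tau_\eps}e^{-\lambda_-(\tau_\eps-s)}\,\sgm_2\bigl(\delta e^{\lambda_+(s-\tau_\eps)}\sgn\eta_\eps^+,0\bigr)\,dW(s).
\]

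\textbf{Step 2 (freeze the random time).} Set $T_\eps = -\tfrac{\alpha p}{\lambda_+}\log\eps + \tfrac{1}{\lambda_+}\log\delta$; by~\eqref{eqn: sigma_equal}, $\tau_\eps - T_\eps$ converges to $0$ in probability. Replacing $\tau_\eps$ by $T_\eps$ in $\widetilde N_\eps$ introduces two errors: (i) a stochastic integral over the thin time window between $T_\eps$ and $\tau_\eps$, whose $L^2$-size I would estimate via its quadratic variation; and (ii) a multiplicative correction of the form $e^{-\lambda_-(\tau_\eps-T_\eps)}-1$, which vanishes in probability. Both errors go to $0$ in probability, so it suffices to study
\[
\bar N_\eps = \int_0^{T_\eps}e^{-\lambda_-(T_\eps-s)}\,\sgm_2\bigl(\delta e^{\lambda_+(s-T_\eps)}\sgn\eta_\eps^+,0\bigr)\,dW(s).
\]

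\textbf{Step 3 (Gaussian identification).} In the restarted frame, $\sgn\eta_\eps^+$ is $\mathcal{F}_0$-measurable and hence independent of the driving $W$, so conditional on $\sgn\eta_\eps^+=\pm1$ the random variable $\bar N_\eps$ is centered Gaussian. The substitution $u=s-T_\eps$ gives its conditional variance as
\[
V_\eps^{\pm}=\int_{-T_\eps}^{0}e^{2\lambda_- u}\bigl|\sgm_2\bigl(\pm\delta e^{\lambda_+ u},0\bigr)\bigr|^2\,du,
\]
which converges to $\sigma_\pm$ as $\eps\to 0$ (equivalently $T_\eps\to\infty$) by dominated convergence. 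Joint convergence of $(\bar N_\eps,\sgn\eta_\eps^+)$ to $(N,\sgn\eta_0^+)$ then follows by averaging conditional characteristic functions, completing the proof.

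The main obstacle is Step 2. The stopping time $\tau_\eps$ appears both inside the integrand (through $\delta e^{\lambda_+(s-\tau_\eps)}$) and as the upper limit of integration, and these two occurrences must be decoupled simultaneously. The control combines the uniform smallness of $|\tau_\eps-T_\eps|$ with the exponential decay of $e^{-\lambda_-(\tau_\eps-s)}$, which suppresses contributions from small $s$ where the Lipschitz approximation of Step~1 would otherwise be too crude.
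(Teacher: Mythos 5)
Your proposal correctly reconstructs the argument that the paper delegates to Lemma 8.9 of~\cite{nhn}: approximate the integrand by its deterministic limit via Lemmas~\ref{lemma: x_approx} and~\ref{lemma: y_after_global}, freeze the random time $\tau_\eps$ at its asymptotic value $T_\eps$, identify the conditional Gaussian law, and recover $\sigma_\pm$ by the substitution $u=s-T_\eps$ and $T_\eps\to\infty$. The one place needing care is that $\widetilde N_\eps$, with $\tau_\eps$ inside the integrand, is not a stochastic integral with adapted integrand but the evaluation at a random time of the $t$-indexed family $Z_\eps(t)=\int_0^t e^{-\lambda_-(t-s)}\sgm_2\bigl(\delta e^{\lambda_+(s-t)}\sgn\eta_\eps^+,0\bigr)\,dW(s)$; the BDG/isometry estimate therefore has to be taken over deterministic $t$ and combined with tightness of $\tau_\eps$, which is exactly the decoupling obstacle you correctly flag at the end.
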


We finish the proof of Lemma~\ref{Thm:after_tauh}. Recall that the process $M_\eps$ was defined in~\eqref{eqn: M_def} and introduce the stochastic processes
\begin{equation}
R_{\epsilon }(t) = \int_{0}^{t}e^{-\lambda _{-}(t-s)} \hat H_2 ( Y_\eps (s) )ds.  \label{eqn: R_def}
\end{equation}%
Note that~\eqref{eqn:duhamel-at-tau-eps} and \eqref{eqn: sigma_equal}  imply
\begin{align}
 Y_{\eps,2}(\tau_\eps)&=e^{-\lambda_-\tau_\eps}Y_{\eps,2}(0)+\int_0^{\tau_\eps} e^{-\lambda_-(\tau_\eps-s)} H_2(Y_{\epsilon}(s),\eps)ds+\eps N^-_\eps(\tau_\eps)
\notag\\
&=e^{-\lambda _{-}\tau _{\epsilon
}}\epsilon ^{\alpha (1-p)\lambda
_{-}/\lambda _{+}}\eta _{\epsilon }^{-}+\epsilon M_{\epsilon }(\tau_{\epsilon })+R_{\epsilon }(\tau_{\epsilon })  \notag \\
&=\eta _{\epsilon }^{-}\left( \frac{|\theta _{\epsilon }^{+}|}{\delta }%
\right) ^{\lambda _{-}/\lambda _{+}}\epsilon ^{\alpha \lambda _{-}/\lambda
_{+}}+\epsilon M_{\epsilon }(\tau_{\epsilon })+R_{\epsilon }(\tau_{\epsilon }).  \label{eqn: y_order}
\end{align}%
Relations \eqref{eq:condition_for_theorem_along_unstable} and~\eqref{eqn:convergence_of_theta} imply
\begin{equation}
\eta _{\epsilon }^{-}\left( \frac{|\theta _{\epsilon }^{+}|}{\delta }%
\right) ^{\lambda _{-}/\lambda _{+}}\overset{Law}{\longrightarrow }%
\left( \frac{|\eta _{0}^{+}|}{\delta }\right) ^{\lambda _{-}/\lambda
_{+}}y_{2}.
\label{eqn:term_from_initial_cond}
\end{equation}
Lemma~\ref{lemma: stoch_convergence} and estimate~\eqref{eqn:bound_on_nu_in_probability} imply
\begin{equation}
M_{\epsilon }(\tau _{\epsilon })\overset{Law}{\longrightarrow }{N},
\quad\epsilon \rightarrow 0.
\label{eqn:gaussian_term}
\end{equation}
Equations~\eqref{eqn:term_from_initial_cond} and~\eqref{eqn:gaussian_term}
describe the behavior of first two terms in (\ref{eqn: y_order})
and the proof of the lemma will be complete as soon as we show that
\begin{equation}
\epsilon ^{-\beta}R_{\epsilon }(\tau _{\epsilon })\overset{%
\mathbf{P}}{\longrightarrow }0,\quad \epsilon \rightarrow 0.
\label{eqn:R_conv_to_0_in_prob}
\end{equation}%

We can write the following rough estimate based on \eqref{eqn:bound_on_nu_in_probability} and Theorem~\ref{Lemma: Def_H1&H2}:
\begin{equation}
\sup_{t\leq \tau_{\epsilon }}|R_{\epsilon }(t)|=O_{\mathbf{P}%
}(\epsilon ^{2\alpha (1-p)\lambda _{-}/\lambda _{+}}).  \label{eqn: R_est_1}
\end{equation}
This is not sufficient for our purposes. We shall need a more detailed analysis instead.
First, note that 
\begin{equation*}
\sup_{t \leq \tau_\eps }|Y_{\epsilon,2 }(t)-\epsilon M_{\epsilon }(t)-R%
_{\epsilon }(t)|e^{\lambda _{-}t}=\epsilon ^{\alpha (1-p)\lambda
_{-}/\lambda _{+}}|\eta _{\epsilon }^{-}|=O_{\mathbf{P}}(\epsilon ^{\alpha
(1-p)\lambda _{-}/\lambda _{+}}).
\end{equation*}
Hence, for any $\gamma >0$ there is a $K_{\gamma }>0$ such that the event 
\begin{equation*}
D_{\epsilon }=\left\{\sup_{t\leq \tau_{\epsilon }}|Y _{\epsilon,2 }(t)-\epsilon M%
_{\epsilon }(t)-R_{\epsilon }(t)|e^{\lambda _{-}t}<K_{\gamma
}\epsilon ^{\alpha (1-p)\lambda _{-}/\lambda _{+}}\right\}
\end{equation*}%
has probability $\mathbf{P}(D_{\epsilon })> 1-\gamma $ for $%
\epsilon >0$ small enough. Moreover, using Theorem~\ref{Lemma: Def_H1&H2} we see that for some constant $K_\beta>0$,%
\begin{equation*}
|R_{\epsilon }(t)|\leq K_\beta\int_{0}^{t}e^{-\lambda
_{-}(t-s)}Y _{\epsilon,2 }^{2}(s)ds.
\end{equation*}%
Then, using the inequality $(a-b)^{2}\leq 2a^{2}+2b^{2}$ and defining 
$K_{\beta ,\gamma }=K_{\beta }K_{\gamma },$ we see that on  $D_{\epsilon }$ for each $t\leq \tau_{\epsilon }$,  
\begin{align} \notag
|R_{\epsilon }(t)| &
\le K_{\beta }e^{-\lambda_{-} t}\int_{0}^{t}(e^{\lambda
_{-}s}Y _{\epsilon,2 }(s))^2 e^{-\lambda_- s}ds
\\ \notag
& \leq 2K_{\beta ,\gamma }e^{-\lambda
_{-}t}\int_{0}^{t}e^{-\lambda _{-}s}\epsilon ^{2\alpha (1-p)\lambda
_{-}/\lambda _{+}}ds+2K_{\beta }\int_{0}^{t}e^{-\lambda _{-}(t-s)}|\epsilon 
M_{\epsilon }(s)+R_{\epsilon }(s)|^{2}ds \\
&\leq 2\frac{K_{\beta ,\gamma }}{\lambda _{-}}\epsilon ^{2\alpha
(1-p)\lambda _{-}/\lambda _{+}}e^{-\lambda _{-}t}+4\frac{K_{\beta }}{\lambda
_{-}}\epsilon ^{2} M_{\epsilon ,\infty }^{2}+4K_{\beta }e^{-\lambda
_{-}t}\int_{0}^{t}e^{\lambda _{-}s}R_{\epsilon }(s)^{2}ds, \label{eqn:R_eps}
\end{align}%
where  
$M_{\epsilon ,\infty }=\sup_{t\leq \tau _{\epsilon }}|M_{\epsilon
}(t)|,
$
so that (according to Lemma \ref{lemma: stoch_est}) $ M_{\epsilon ,\infty }$ is slowly growing.
Due to \eqref{eqn: R_est_1}  we can find a constant $K_{\gamma }^{\prime }>0$ (independent of $%
\epsilon >0$ and $t>0$) so that the event 
\begin{equation*}
D_{\epsilon }^{\prime }=D_{\epsilon }\cap \left\{\sup_{t\leq
\tau _{\epsilon }}|R_{\epsilon }(t)|\leq K_{\gamma }^{\prime
}\epsilon ^{\alpha (1-p)\lambda _{-}/\lambda _{+}}\right\}
\end{equation*}%
has probability $\mathbf{P}(D_{\epsilon }^{\prime })>1-\gamma $
for all $\epsilon >0$ small enough. Hence, multiplying both sides of~\eqref{eqn:R_eps} by
$e^{\lambda _{-}t}$, we see that for some constant $C_{\gamma }>0$ and all $t\le \tau_\eps$,
\begin{equation*}
e^{\lambda _{-}t}|R%
_{\epsilon }(t)|\mathbf{1}_{\mathcal{D}_{\epsilon }^{\prime }}\leq \alpha (t)+C_{\gamma }\epsilon ^{\alpha (1-p)\lambda
_{-}/\lambda _{+}}\int_{0}^{t}e^{\lambda _{-}s}|R_{\epsilon }(s)|%
\mathbf{1}_{\mathcal{D}_{\epsilon }^{\prime }}ds,
\end{equation*}%
where
\begin{equation}
\alpha (t)=C_{\gamma }\epsilon ^{2\alpha (1-p)\lambda _{-}/\lambda
_{+}}+C_{\gamma }\epsilon ^{2}M_{\epsilon ,\infty }^{2}e^{\lambda _{-}t}. \label{eqn: alpha_def}
\end{equation}
Using Gronwall's lemma and~\eqref{eqn: alpha_def} we get
\begin{eqnarray*}
\mathbf{1}_{\mathcal{D}_{\epsilon }^{\prime }}e^{\lambda _{-}t}|R%
_{\epsilon }(t)| &\leq &\alpha (t)+C_{\gamma }\epsilon ^{\alpha (1-p)\lambda
_{-}/\lambda _{+}}\int_{0}^{t}\alpha (s)e^{C_{\gamma }\epsilon ^{\alpha
(1-p)\lambda _{-}/\lambda _{+}}(t-s)}ds \\
&\leq &\alpha (t)+C_{\gamma }^{2}\epsilon ^{3\alpha (1-p)\lambda
_{-}/\lambda _{+}}t e^{C_{\gamma }\epsilon ^{\alpha (1-p)\lambda _{-}/\lambda
_{+}}t} \\
&&+\frac{C_{\gamma }^{2}}{\lambda _{-}}\epsilon ^{2+\alpha (1-p)\lambda
_{-}/\lambda _{+}}M_{\epsilon ,\infty }^{2} t e^{\lambda _{-}t+C_{\gamma
}\epsilon ^{\alpha (1-p)\lambda _{-}/\lambda _{+}}}.
\end{eqnarray*}%
Hence, 
\begin{eqnarray*}
\mathbf{1}_{\mathcal{D}_{\epsilon }^{\prime }}|R_{\epsilon }(t)|
&\leq &C_{\gamma }\epsilon ^{2\alpha (1-p)\lambda _{-}/\lambda
_{+}}e^{-\lambda _{-}t}(1+C_{\gamma }\epsilon ^{\alpha (1-p)\lambda
_{-}/\lambda _{+}}t e^{C_{\gamma }\epsilon ^{2\alpha (1-p)\lambda _{-}/\lambda
_{+}}t}) \\
&&+C_{\gamma }\epsilon ^{2}M_{\epsilon ,\infty }^{2}(1+\frac{C_{\gamma }}{%
\lambda _{-}}\epsilon ^{\alpha (1-p)\lambda _{-}/\lambda _{+}}t e^{C_{\gamma
}\epsilon ^{\alpha (1-p)\lambda _{-}/\lambda _{+}}}).
\end{eqnarray*}%
Using \eqref{eqn: sigma_equal}, we get that for any $q>0$,
\begin{align*}
\mathbf{1}_{\mathcal{D}_{\epsilon }^{\prime }}|R_{\epsilon
}(\tau _{\epsilon })|
&= O_\Pp \left(\epsilon ^{2\alpha (1-p)\lambda _{-}/\lambda
_{+}}e^{-\lambda _{-}\tau_\eps}+\epsilon ^{2}M_{\epsilon
,\infty }^{2}\right)
\\&
=O_\Pp\left(\epsilon ^{\alpha \lambda _{-}/\lambda _{+}+\alpha (1-p)\lambda
_{-}/\lambda _{+}}+\epsilon ^{2-q}\right),
\end{align*}%
so that~\eqref{eqn:R_conv_to_0_in_prob} follows, and the proof is complete by choosing $q$ small enough.

\bibliography{happydle}{}
\bibliographystyle{plain}
\end{document}